\documentclass[10pt]{amsart}
\usepackage{amsmath,amscd}
\usepackage{amsbsy}
\usepackage{amssymb}
\usepackage{amscd,amsthm}
\usepackage[all,cmtip]{xy}

\newtheorem{thm}{Theorem}
\numberwithin{thm}{section}
\newtheorem{lem}[thm]{Lemma}
\newtheorem{prop}[thm]{Proposition}
\newtheorem{cor}[thm]{Corollary}
\newtheorem{exam}[thm]{Example}
\newtheorem{rema}[thm]{Remark}

\newtheorem{defi}[thm]{Definition}

\newtheorem*{thm2}{Theorem}
\newtheorem*{cor2}{Corollary}

\newtheorem*{prob2}{Problem}
\newtheorem*{que2}{Question}

\begin{document}
\begin{center}
\huge{Ulrich complexity and categorical representability dimension %of Brauer--Severi varieties and twisted flags
}\\[1cm]
\end{center}

\begin{center}

\large{Sa$\mathrm{\check{s}}$a Novakovi$\mathrm{\acute{c}}$}\\[0,4cm]
%\small{\emph{To B. with love}}\\[0,6cm]
{\small April 2025}\\[0,3cm]
\end{center}

\noindent{\small \textbf{Abstract}. 
We investigate the Ulrich complexity of certain examples of Brauer--Severi varieties, twisted flags and involution varieties and establish lower and upper bounds. Furthermore, we relate Ulrich complexity to the categorical representability dimension of the respective varieties. We also state an idea why, in general, a relation between Ulrich complexity and categorical representability dimension may appear.}\\

\begin{center}
\tableofcontents
\end{center}

\section{Introduction}
Let $H$ a very ample line bundle on a variety $X$. In \cite{ESWV} the authors defined an \emph{Ulrich bundle} for $(X,H)$ to be a vector bundle $\mathcal{E}$ satisfying $h^q(X,\mathcal{E}(-iH))=0$ for each $q\in\mathbb{Z}$ and $1\leq i\leq \mathrm{dim}(X)$. The smallest $r>0$ such that there is a rank $r$ Ulrich bundle on $(X,H)$ is called the \emph{Ulrich complexity} of $X$ and will be denoted by $\mathrm{uc}((X,H))$ (see \cite{BESR}). Ulrich bundles are somehow the "nicest" arithmetically Cohen--Macaulay sheaves which are important to understand, since they give a measurement of the complexity of the variety. Ulrich bundles attracted a lot of attention in view of their multiple links to other topics such as Boij--S\"oderberg therory, Chow forms, matrix factorizations an so on. It was conjectured in \cite{ESWV} that any projective variety carries an Ulrich bundle. Moreover, it is asked for the smallest possible rank of such a bundle. This conjecture is a wide open problem, and we know a few result at the present. Varieties known to carry Ulrich sheaves include curves and Veronese varieties \cite{ESWV}, \cite{HV}, \cite{DF}, \cite{LR} complete intersections \cite{BHU1V}, generic linear determinantal varieties \cite{BHUV}, Segre varieties \cite{CMRPV}, rational normal scrolls \cite{MRV}, Grassmannians \cite{CMRV}, some flag varieties \cite{COS}, \cite{CMRV}, generic $\mathrm{K}$3 surfaces \cite{AFOV}, abelian surfaces \cite{BEV}, Enriques surfaces \cite{BNV}, ruled surfaces \cite{ACMRV} or twisted flags \cite{NOVA}, \cite{NV1} to mention only a few. The Ulrich complexity for some varieties is determined for instance in \cite{BESR} and \cite{DF}. Among others, in \cite{ESWV} it is proved that any curve $C\subset\mathbb{P}^n$ has a rank-$2$ Ulrich sheaf, provided the base field of the curve is infinite. Examples may be pointless conics defined by $x^2+y^2+z^2=0$. Recall that a scheme $X$ of finite type over a field $k$ is called \emph{Brauer--Severi variety} if $X\otimes_k\bar{k}\simeq \mathbb{P}^n$. Via Galois cohomology, isomorphism classes of Brauer--Severi varieties over $k$ are in one-to-one correspondence with isomorphism classes of central simple algebras over $k$. Moreover, Brauer--Severi varieties (respectively the corresponding central simple algebras) have important invariants called period, index and degree (see Section 2 for details). Pointless conics such as those defined by $x^2+y^2+z^2=0$ over a subfield of $\mathbb{R}$ are Brauer--Severi varieties, and since these admit rank-$2$ Ulrich bundles by \cite{ESWV}, it is natural to ask about Ulrich bundles over Brauer--Severi varieties of higher dimension and relate its rank to the arithmetic.
%Such conics are Brauer--Severi varieties of dimension one and it is very natural to seek for Ulrich bundles on Brauer--Severi varieties of arbitrary dimension. Recall that a scheme $X$ of finite type over a field $k$ is called \emph{Brauer--Severi variety} if $X\otimes_k\bar{k}\simeq \mathbb{P}^n$. Via Galois cohomology, isomorphism classes of Brauer--Severi varieties over $k$ are in one-to-one correspondence with isomorphism classes of central simple algebras over $k$. Moreover, Brauer--Severi varieties (respectively the corresponding central simple algebras) have important invariants called period, index and degree (see Section 2 for all the details). 
%In view of the fact that not much is known about the minimal rank of Ulrich bundles for $(\mathbb{P}^n,\mathcal{O}_{\mathbb{P}^n}(d))$, it seems to be a challenging problem to determine the minimal rank of Ulrich bundles on Brauer--Severi varieties. 
In the case of non-split Brauer--Severi curves there are always Ulrich bundles of rank two (see \cite{NV1}). It is an easy observation that non-split Brauer--Severi varieties cannot have Ulrich line bundles. So the minimal rank of an Ulrich bundle on a Brauer--Severi curve is one or two, depending on whether the curve admits a rational point or not. 

In \cite{NV1} it is shown that there exist always Ulrich bundles on any polarized Brauer--Severi variety. So the existence of Ulrich bundles for Brauer--Severi varieties has been solved completely. %In particular, it is shown that the rank of the (obtained) Ulrich bundle is divisible by the period. 
In view of the fact that not much is known about the minimal rank of Ulrich bundles for $(\mathbb{P}^n,\mathcal{O}_{\mathbb{P}^n}(d))$, it seems to be a challenging problem to determine the Ulrich complexity of Brauer--Severi varieties. An attempt was given by the author in \cite{NV1}. For instance, using a generalized Hartshorne--Serre correspondence, it was shown that certain Brauer--Severi varieties associated to a central simple division algebra of index $4$ and period $2$ admit a unique rank two Ulrich bundle. More precise, it is proved:
\begin{thm2}[\cite{NV1}, Theorem 5.15]
	Let $X$ be a Brauer--Severi variety of dimension $3$ where $\mathcal{O}_X(2)$ exists. Suppose there is a smooth geometrically connected genus one curve $C$ on $X$ such that the restriction map $H^0(X,\mathcal{O}_X(2))\rightarrow H^0(C,\mathcal{O}_C(2))$ is bijective. Then there is an  unique Ulrich bundle of rank two for $(X,\mathcal{O}_X(2))$.
\end{thm2}
After base change to the algebraic closure, it follows from \cite{ESWV}, Corollary 5.3 that the rank of an Ulrich bundle on $X$ from above must be divisible by two. Hence it cannot carry an Ulrich bundle of rank one. This implies: 
\begin{cor2}
Let $X$ be a Brauer--Severi variety of dimension $3$ where $\mathcal{O}_X(2)$ exists. Suppose there is a smooth geometrically connected genus one curve $C$ on $X$ such that the restriction map $H^0(X,\mathcal{O}_X(2))\rightarrow H^0(C,\mathcal{O}_C(2))$ is bijective. Then the Ulrich comlexity of $X$ is two.	
\end{cor2}
In \cite{NV1} it is also given an example of a Brauer--Severi variety satisfying the conditions of the latter theorem. %Furthermore, we show that if $X$ is a Brauer--Severi variety of dimension $n$ and period $p$ that carries an Ulrich bundle of rank $r$ for $(X,\mathcal{O}(p))$, then for any $d\geq 1$, $(X,\mathcal{O}(pd))$ admits an Ulrich bundle of rank $r\cdot n!$ (see Proposition 5.6). Let $X$ be as in Theorem 5.11 and assume that $(X,\mathcal{O}_X(p))$ carries an Ulrich bundle. Corollary 5.12 and Proposition 5.6 imply that there is an Ulrich bundle for $(X,\mathcal{O}_X(pd))$ whose rank is a multiple of $p$ (see Remark 5.13). Moreover, using 
Recall that any Fano threefold of index two carries a special rank two Ulrich bundle \cite{BV}. In particular, there is always a special rank two Ulrich bundle for $(\mathbb{P}^3,\mathcal{O}_{\mathbb{P}^3}(2))$. Since there is no rank one Ulrich bundle for $(\mathbb{P}^3,\mathcal{O}_{\mathbb{P}^3}(2))$, the minimal rank of an Ulrich bundle for $(\mathbb{P}^3,\mathcal{O}_{\mathbb{P}^3}(2))$ is two. The period of $\mathbb{P}^3$, considered as a trivial Brauer--Severi variety, is one. Notice that in \cite{LV} it is shown that there is a unique rank two Ulrich bundle for $(\mathbb{P}^2,\mathcal{O}_{\mathbb{P}^2}(2))$. These results together with the results obtained in \cite{NV1} led us to formulate some questions concerning Ulrich bundles on Brauer--Severi varieties (see \cite{NV1}). Let $X$ be an arbitrary (non-trivial) Brauer--Severi variety of period $\mathrm{per}(X)=p$.

\begin{itemize}
	\item[1)] Is there always an Ulrich bundle of rank $\mathrm{per}(X)$ for $(X,\mathcal{O}_X(p))$ ?
	\item[2)] Is the minimal rank of an Ulrich bundle for $(X,\mathcal{O}_X(p))$ exactly $\mathrm{per}(X)=p$ ?
	\item[3)] How does the minimal rank of an Ulrich bundle for $(X,\mathcal{O}_X(p\cdot d))$ depend on $d$ ?
	\item[4)]	Suppose the minimal rank of an Ulrich bundle does not equal $\mathrm{per}(X)$. Is there a formula involving the invariants period, index and degree that calculates the minimal rank ?  
\end{itemize}
The motivation for the present work is to try to give answers to the above questions. But we also want to relate the rank of an Ulrich bundle to another concept. In the introduction of \cite{NHABIL} the author observed a relation between the so called \emph{categorical representability dimension} and the Ulrich complexity of a Brauer--Severi variety. We now recall the definition and state the observation below. For the definition of a semiorthogonal decomposition or exceptional collections of a $k$-linear triangulated category see Section 4. For more details, we refer, for instance, to \cite{BBT} and references therein. 
In \cite{BBT} Bernardara and Bolognesi introduced the notion of categorical representability. A $k$-linear triangulated category $\mathcal{T}$ is said to be \emph{representable in dimension $m$} if there is a semiorthogonal decomposition $\mathcal{T}=\langle \mathcal{A}_1,...,\mathcal{A}_n\rangle$ and for each $i=1,...,n$ there exists a smooth projective connected variety $Y_i$ with $\mathrm{dim}(Y_i)\leq m$, such that $\mathcal{A}_i$ is equivalent to an admissible subcategory of $D^b(Y_i)$ (see \cite{AB1T}). We use the following notation
\begin{eqnarray*}
	\mathrm{rdim}(\mathcal{T}):=\mathrm{min}\{m\mid \mathcal{T}\  \textnormal{is representable in dimension m}\},
\end{eqnarray*}
whenever such a finite $m$ exists. Let $X$ be a smooth projective $k$-variety. One says $X$ is \emph{representable in dimension} $m$ if $D^b(X)$ is representable in dimension $m$. We will use the following notation:
\begin{eqnarray*}
	\mathrm{rdim}(X):=\mathrm{rdim}(D^b(X)).
\end{eqnarray*}

The notation of categorical representability dimension of the bounded derived category of coherent sheaves $D^b(X)$ was introduced after it had been asked whether the derived category can detect the existence of a $k$-rational point (see the introduction in \cite{AB1111}). An indeed, it can be shown that certain varieties $X$ admit $k$-rational points if and only if $\mathrm{rdim}(X)=0$ \cite{AB111}, \cite{AB1111}, \cite{BB111}, \cite{MBT111} and \cite{NV2}, \cite{NV3}, \cite{NV4}.

Let us now recall the observation from the introduction of \cite{NHABIL}: let $X$ be a Brauer--Severi variety of period $p$ and, for simplicity, let us denote the Ulrich complexity of $(X,\mathcal{O}_X(p))$ by $\mathrm{uc}(\phi_p(X))$, where $\phi_p$ denotes the embedding of $X$ into projective space via $\mathcal{O}_X(p)$ as it is formulated in Theorem 2.1 in Section 2. The results in \cite{LV}, \cite{NV2} and \cite{NV1} show that $X$ admits a $k$-rational point (which is equivalent to $X$ being $k$-rational) if and only if there exists an Ulrich line bundle for $(X,\mathcal{O}_X(p))$, i.e. if and only if $\mathrm{uc}(\phi_p(X))=1$. If we relate this to the result that $X$ admits a $k$-rational point if and only if $\mathrm{rdim}(X)=0$ (see \cite{NV2}), we obtain the following: let $X$ be a Brauer--Severi variety of period $p$. Then $\mathrm{uc}(\phi_p(X))=1$ if and only if $\mathrm{rdim}(X)=0$. This gives $\mathrm{rdim}(X)+1=\mathrm{uc}(\phi_1(X))$ in the split case. In the non-split case the situation gets more involved. If $C$ is a non-split Brauer--Severi curve, then $\mathrm{uc}(\phi_2(X))=2$ (see \cite{NV1}). On the other hand, $\mathrm{rdim}(C)=1$ (see \cite{NV2}). And if $X$ is a 3-dimensional non-split Brauer--Severi variety of period 2 over $\mathbb{R}$, then, according to \cite{NV1}, Theorem 5.15 and Remark 5.17, $\mathrm{uc}(\phi_2(X))=2$. But since $\mathrm{ind}(X)=2$, \cite{NV2}, Theorem 6.14 gives $\mathrm{rdim}(X)=1$. Furthermore, for the non-tivial Brauer--Severi variety from the above corollary, we have $\mathrm{uc}(\phi_p(X))=2$. Note that $\mathrm{rdim}(X)\leq \mathrm{ind}(X)-1$ for a Brauer--Severi variety $X$ with equality if for instance $\mathrm{ind}(X)\leq 3$ (see \cite{NV3}, Proposition 4.1 and Theorem 1.4). This implies that for $X$ as in the corollary over $\mathbb{R}$, we have $\mathrm{rdim}(X)+1=\mathrm{ind}(X)=2$. Hence $\mathrm{uc}(\phi_p(X))=\mathrm{rdim}(X)+1$. This yields
\begin{thm2}
	Let $X$ be one of the Brauer--Severi varieties from above. Then 
	\begin{eqnarray*}
		\mathrm{uc}(\phi_p(X))=\mathrm{rdim}(X)+1.
	\end{eqnarray*}
\end{thm2}  
\noindent
In all the examples form above, we had period equals index. So, with regard to the results summarized in \cite{NV1} and the results obtained in \cite{NV2}, \cite{NV3} and \cite{NV4}, we formulated the following question:
\begin{que2}
	Let $X$ be a Brauer--Severi variety with period $p$ equals index. Is it true that $\mathrm{uc}(\phi_p(X))=\mathrm{rdim}(X)+1$ ?
\end{que2}
%\noindent
This is a natural question since it is conjectured \cite{NV2} that $\mathrm{rdim}(X)=\mathrm{ind}(X)-1$ if period equals index. Moreover, the results in \cite{NV1} suggest $\mathrm{uc}(\phi_p(X))=\mathrm{period}(X)$ if period equals index. Note that for determining both $\mathrm{uc}(\phi_p(X))$ and $\mathrm{rdim}(X)$, we used a full weak exceptional collection consisting of vector bundles $\mathcal{V}_i$ satisfying $\mathrm{End}(\mathcal{V}_i)=A^{\otimes i}$, where $A$ is the central simple algebra corresponding to $X$. Essentially, this full weak exceptional collection gives the semiorthogonal decomposition obtained in \cite{BERV}. Again, we refer to Section 4 for a definition of full weak exceptional collection. In the next sections we discover that the above question has a negative answer and that the relation between $\mathrm{rdim}$ and $\mathrm{uc}$ seems to be more complicated (see Sections 6 and 7). As an ad hoc argument, consider a biquaternion algebra $D_1\otimes D_2$ of index $4$ and let $X$ be the corresponding Brauer--Severi variety. According to \cite{PB}, Appendix B, $\mathrm{rdim}(X)=2$. Therefore, $\mathrm{rdim}(X)+1=3$. Now from Example 5.5 we conclude that $\mathrm{uc}(\phi_2(X))$ cannot be three. In this case we have $\mathrm{uc}(\phi_p(X))\neq \mathrm{rdim}(X)+1$. Nonetheless, one can ask for a relation between these two numbers. %We find that this relation can be expressed as a formula involving uc and rdim. 
In Section 6 we also provide a criterion for when $\mathrm{uc}=\mathrm{rdim}+1$ in the case of (generalized) Brauer--Severi varieties. But what happens for other than Brauer--Severi varieties? Recall that a smooth curve $C$ of genus $>0$ over a field $k$ without rational point admits an Ulrich bundle of rank two. It is well known that $\mathrm{rdim}(C)=1$ (since there are no non-trivial semiorthogonal decompositions). Therefore $\mathrm{rdim}(C)+1$ equals the minimal rank of an Ulrich bundle on $C$. So we wonder whether there is indeed a relation between the minimal rank of an Ulrich bundle and $\mathrm{rdim}$ in a more gerenal (arithmetic) setting. At this point, we want to mention another observation. In \cite{LV} it is constructed an Ulrich bundle of rank 3 for $(\mathbb{P}^2,\mathcal{O}(3))$. We believe that this rank 3 Ulrich bundle descents to a non-split Brauer--Severi surface $S$. And since $\mathrm{rdim}(S)=2$, we would have $\mathrm{uc}(\phi_3(X))=\mathrm{rdim}(S)+1$.

At least for Brauer--Severi varieties $X$, it seems to be an interesting problem to study the relation between the numbers $\mathrm{uc}(\phi_p(X))$ and $\mathrm{rdim}(X)$. Here one could also consider $X$ over fields for which period equals index and ask whether there is a relation to the period-index problem. Moreover, studying Brauer--Severi varieties could provide a testing ground for studying more general varieties. Note that similar questions can also be formulated for twisted forms of flags and involution varieties. For some classes of twisted flags, the existence of Ulrich bundles is proved in \cite{NOVA}. The case of involution varieties is treated in Section 5, where the existence of Ulrich bundles is proved. In Section 7 the special case of involution surfaces is studied in more detail and the Ulrich complexity and its relation to the categorical representability dimension is established. But one can also consider other surfaces such as del Pezzo surfaces, ruled surfaces, abelian surfaces and so on (see Section 7). We summarize some results from the literature and see that, again, $\mathrm{rdim}+1=\mathrm{uc}$ or $\mathrm{rdim}=\mathrm{uc}$ holds in these cases as well. Finally, in the Appendix it is outlined how the observations made for Brauer--Severi varieties, twisted flags and involution varieties might also hold in more general setting. In a more general setting one can also define the period and the index of a variety $X$. This leads us to formulate the following problem:
\begin{prob2}
	Let $X$ be a smooth projective variety over a field $k$, embedded into $\mathbb{P}$ by $\mathcal{L}$. Find the relation between $\mathrm{uc}((X,\mathcal{L}))$ and $\mathrm{rdim}(X)$, depending on period, index, polarization and the arithmetic of $X$. 
\end{prob2}
As mentioned above, categorical representability dimension is related to the existence of a rational point on $X$. It seems to us that Ulrich complexity also depends on the arithmetic of a variety, especially if the field $k$ is not algebraically closed. So from this point of view it is also reasonable to state the problem from above.\\

\noindent
The main results of the present paper are stated in Sections 5, 6 and 7.\\

%{\small \textbf{Acknowledgement}. I thank ...}\\

\noindent{\small \textbf{Convetions}. Throughout this work $k$ is an arbitrary field. Moreover, $\bar{k}$ denotes an algebraic closure and $\bar{\mathcal{E}}$ the base change of a vector bundle $\mathcal{E}$ over $k$ to $\bar{k}$. The dimension of the cohomology group $H^i(X,\mathcal{F})$ as $k$ vector space is abbreviated by $h^i(\mathcal{F})$.} %Similary, we write $\mathrm{ext}^i(\mathcal{F},\mathcal{G})$ for $\mathrm{dim}\mathrm{Ext}^i(\mathcal{F},\mathcal{G})$.} %For $X\times Y$, let $f$ and $g$ be the projections to $X$ and $Y$ respectively. The tensor product $f^*\mathcal{F}\otimes g^*\mathcal{G}$ will be denoted by $\mathcal{F}\boxtimes\mathcal{G}$.} 

%On behalf of all authors, the corresponding author states that there is no conflict of interest.

%It is quite reasonable to predict the following conjecture.
%\begin{con}
%Let $X$ be a non-split Brauer--Severi variety of dimension $p-1$, where $p$ denotes a prime. Then the minimal possible rank of an Ulrich bundle is $p$.
%\end{con}

\section{Brauer--Severi varieties}
We recall the basics of Brauer--Severi varieties and central simple algebras and refer to \cite{GSV} and references therein. A \emph{Brauer--Severi variety} of dimension $n$ is a scheme $X$ of finite type over $k$ such that $X\otimes_k L\simeq \mathbb{P}^n$ for a finite field extension $k\subset L$. This definition of a Brauer--Severi variety is equivalent to the definition given in the introduction (see \cite{GSV}, Remark 5.12). A field extension $k\subset L$ such that $X\otimes_k L\simeq \mathbb{P}^n$ is called \emph{splitting field} of $X$. Clearly, the algebraic closure $\bar{k}$ is a splitting field for any Brauer--Severi variety. One can show that a Brauer--Severi variety always splits over a finite separable field extension of $k$ (see \cite{GSV}, Corollary 5.1.4). By embedding the finite separable splitting field into its Galois closure, a Brauer--Severi variety splits over a finite Galois extension of the base field $k$ (see \cite{GSV}, Corollary 5.1.5). It follows from descent theory that $X$ is projective, integral and smooth over $k$. If the Brauer--Severi variety $X$ is already isomorphic to $\mathbb{P}^n$ over $k$, it is called \emph{split}, otherwise it is called \emph{non-split}. There is a well-known one-to-one correspondence between Brauer--Severi varieties and central simple $k$-algebras. Recall that an associative $k$-algebra $A$ is called \emph{central simple} if it is an associative finite-dimensional $k$-algebra that has no two-sided ideals other than $0$ and $A$ and if its center equals $k$. If the algebra $A$ is a division algebra, it is called \emph{central division algebra}. For instance, a Brauer--Severi curve is associated to a quaternion algebra (see \cite{GSV}). Central simple $k$-algebras can be characterized by the following well-known fact (see \cite{GSV}, Theorem 2.2.1): $A$ is a central simple $k$-algebra if and only if there is a finite field extension $k\subset L$ such that $A\otimes_k L \simeq M_n(L)$ if and only if $A\otimes_k \bar{k}\simeq M_n(\bar{k})$.

The \emph{degree} of a central simple algebra $A$ is now defined to be $\mathrm{deg}(A):=\sqrt{\mathrm{dim}_k A}$. According to the Wedderburn Theorem, for any central simple $k$-algebra $A$ there is an integer $n>0$ and a division algebra $D$ such that $A\simeq M_n(D)$. The division algebra $D$ is also central and unique up to isomorphism. Now the degree of the unique central division algebra $D$ is called the \emph{index} of $A$ and is denoted by $\mathrm{ind}(A)$. It can be shown that the index is the smallest among the degrees of finite separable splitting fields of $A$ (see \cite{GSV}, Corollary 4.5.9). Two central simple $k$-algebras $A\simeq M_n(D)$ and $B\simeq M_m(D')$ are called \emph{Brauer equivalent} if $D\simeq D'$. Brauer equivalence is indeed an equivalence relation and one defines the \emph{Brauer group} $\mathrm{Br}(k)$ of a field $k$ as the group whose elements are equivalence classes of central simple $k$-algebras and group operation being the tensor product. It is an abelian group with inverse of the equivalence class of $A$ given by the equivalence class of $A^{op}$. The neutral element is the equivalence class of $k$. The order of a central simple $k$-algebra $A$ in $\mathrm{Br}(k)$ is called the \emph{period} of $A$ and is denoted by $\mathrm{per}(A)$. It can be shown that the period divides the index and that both, period and index, have the same prime factors (see \cite{GSV}, Proposition 4.5.13). Denoting by $\mathrm{BS}_n(k)$ the set of all isomorphism classes of Brauer--Severi varieties of dimension $n$ and by $\mathrm{CSA}_{n+1}(k)$ the set of all isomorphism classes of central simple $k$-algebras of degree $n+1$, there is a canonical identification $\mathrm{CSA}_{n+1}(k)=\mathrm{BS}_n(k)$ via non-commutative Galois cohomology (see \cite{GSV} for details). Hence any $n$-dimensional Brauer--Severi variety $X$ corresponds to a central simple $k$-algebra of degree $n+1$. In view of the one-to-one correspondence between Brauer--Severi varieties and central simple algebras one can also speak about the period of a Brauer--severi variety $X$. It is defined to be the period of the corresponding central simple $k$-algebra $A$.

Geometrically, the period of a Brauer--Severi variety $X$ can be interpreted as the smallest positive integer $p$ such that $\mathcal{O}_X(p)$ exists on $X$. In other words, if $X\otimes_k L\simeq \mathbb{P}^n$, then $p$ is the smallest positive integer such that $\mathcal{O}_{\mathbb{P}^n}(p)$ descends to a line bundle on $X$. 
Moreover, the Picard group of $X$ is isomorphic to $\mathbb{Z}$ and is generated by $\mathcal{O}_X(p)$. In the present note, we make use of the following fact concerning embeddings of a Brauer--Severi variety.
\begin{thm}[\cite{KANG}, Theorem 1 or \cite{LIV}, Corollary 3.6]
	Let $X$ be a Brauer--Severi variety of period $p$ over $k$. Then any line bundle $\mathcal{O}_X(pd)$ for $d\geq 1$ gives rise to an embedding
	\begin{eqnarray*}
		\phi_{pd}\colon X\longrightarrow \mathbb{P}^{N-1}, \;\; \text{where}\;\; N=\binom{\mathrm{dim}(X)+pd}{pd}.
	\end{eqnarray*} 
	After base change to a splitting field $L$ of $X$, this embedding becomes the $dp$-uple Veronese embedding $v_{pd}$ of $X\otimes_k L=\mathbb{P}_L^{\mathrm{dim}(X)}$ into $\mathbb{P}_L^{N-1}$.
\end{thm}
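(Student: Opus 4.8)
The plan is to reduce everything to the split case by faithfully flat descent along a splitting field. Write $n=\mathrm{dim}(X)$ and fix a finite Galois splitting field $k\subset L$, so that $X_L:=X\otimes_k L\simeq \mathbb{P}^n_L$. Since $\mathrm{Pic}(X)\simeq\mathbb{Z}$ is generated by the ample class $\mathcal{O}_X(p)$, whose image under the injective restriction $\mathrm{Pic}(X)\hookrightarrow\mathrm{Pic}(X_L)=\mathrm{Pic}(\mathbb{P}^n_L)$ is $\mathcal{O}_{\mathbb{P}^n_L}(p)$, the line bundle $\mathcal{O}_X(pd)$ satisfies $\mathcal{O}_X(pd)\otimes_k L\simeq \mathcal{O}_{\mathbb{P}^n_L}(pd)$.

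First I would compute the space of global sections. Because $\mathrm{Spec}(L)\to\mathrm{Spec}(k)$ is flat, coherent cohomology commutes with this base change, so
\[
H^0(X,\mathcal{O}_X(pd))\otimes_k L\;\simeq\;H^0\big(\mathbb{P}^n_L,\mathcal{O}_{\mathbb{P}^n_L}(pd)\big).
\]
The right-hand side is the space of homogeneous polynomials of degree $pd$ in $n+1$ variables, of dimension $\binom{n+pd}{pd}=N$. Hence $\mathrm{dim}_k H^0(X,\mathcal{O}_X(pd))=N$, and any morphism defined by this complete linear system necessarily lands in $\mathbb{P}^{N-1}=\mathbb{P}(H^0(X,\mathcal{O}_X(pd))^\vee)$.

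Next I would produce the morphism and show it is a closed immersion by descent. The sections of $\mathcal{O}_X(pd)$ generate the bundle: global generation can be tested after the faithfully flat base change to $L$, where $\mathcal{O}_{\mathbb{P}^n_L}(pd)$ is visibly globally generated. Thus $|\mathcal{O}_X(pd)|$ is base-point-free and defines a morphism $\phi_{pd}\colon X\longrightarrow \mathbb{P}^{N-1}$. The formation of the morphism attached to a linear system commutes with flat base change, so $\phi_{pd}\otimes_k L$ is exactly the morphism attached to $|\mathcal{O}_{\mathbb{P}^n_L}(pd)|$, namely the $pd$-uple Veronese embedding $v_{pd}\colon\mathbb{P}^n_L\hookrightarrow\mathbb{P}^{N-1}_L$. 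Since $v_{pd}$ is a closed immersion and the property of being a closed immersion is fppf-local on the base, $\phi_{pd}$ is itself a closed immersion, i.e.\ an embedding, and its base change to $L$ is $v_{pd}$ as claimed.

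The main obstacle is not the cohomology computation but the two descent inputs: one must know that forming the morphism of a complete linear system is compatible with base change, and that base-point-freeness and ``closed immersion'' descend along the faithfully flat cover $\mathrm{Spec}(L)\to\mathrm{Spec}(k)$. Both are standard (flat base change for $H^0$ together with fppf descent of closed immersions), but they are precisely the points where one uses that $L/k$ is a splitting field rather than an arbitrary extension: after base change the geometry is literally the Veronese picture on $\mathbb{P}^n_L$, and descent transports the conclusion back to $k$.
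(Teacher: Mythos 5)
Your argument is correct. The paper does not prove this statement itself; it is quoted from the literature (\cite{KANG}, Theorem 1 and \cite{LIV}, Corollary 3.6), and your descent proof is essentially the argument behind the Liedtke citation: since $\mathcal{O}_X(pd)$ genuinely exists on $X$ (as $p$ is the period), flat base change identifies $H^0(X,\mathcal{O}_X(pd))\otimes_k L$ with the degree-$pd$ forms on $\mathbb{P}^n_L$, the evaluation surjection and the resulting morphism to $\mathbb{P}^{N-1}$ commute with base change, and ``closed immersion'' descends along the faithfully flat cover $\mathrm{Spec}(L)\to\mathrm{Spec}(k)$. All the descent inputs you flag are standard and correctly deployed, so your write-up is a complete, self-contained proof of what the paper takes as a black box.
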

%Recall that on a $n$-dimensional Brauer--Severi variety $X$, corresponding to a central simple algebra $A$, there are indecomposable vector bundles $\mathcal{V}_i$, $i\in\mathbb{Z}$, satisfying $\mathcal{V}_i\otimes_k \bar{k}\simeq \mathcal{O}_{\mathbb{P}^n}(i)^{\oplus d_i}$, where $d_i=\mathrm{ind}(A^{\otimes i})$ denotes the index of the central simple algebra $A^{\otimes i}$. These $\mathcal{V}_i$, $i\in\mathbb{Z}$, are unique up to isomorphism (see \cite{N}).

\section{Generalities on Ulrich bundles}
Let $H$ a very ample line bundle on a variety $X$. Recall that in \cite{ESWV} a vector bundle $\mathcal{E}$ on $X$ is defined to be an \emph{Ulrich bundle} for $(X,H)$ if it satisfies $h^q(X,\mathcal{E}(-iH))=0$ for each $q\in\mathbb{Z}$ and $1\leq i\leq \mathrm{dim}(X)$. Although there are further properties of Ulrich bundles, we will list only those needed in the present note. We refer the reader to \cite{BV} and \cite{ESWV} for details.
%\begin{lem}[\cite{BV}, (3.5)] 
%	Let $\mathcal{E}$ and $\mathcal{F}$ be Ulrich bundles for $(X,\mathcal{O}_X(1))$ and $(Y,\mathcal{O}_Y(1))$ and put $n=\mathrm{dim}(X)$. Then $\mathcal{E}\boxtimes \mathcal{F}(n)$ is an Ulrich bundle for $(X\times Y,\mathcal{O}_X(1)\boxtimes \mathcal{O}_Y(1))$.
%\end{lem}
\begin{lem}[\cite{BV}, (3.6)]
	Let $\pi\colon X\rightarrow Y$ be a finite surjective morphism, $\mathcal{L}$ a very ample line bundle on $Y$ and $\mathcal{E}$ a vector bundle on $X$. Then $\mathcal{E}$ is an Ulrich bundle for $(X,\pi^*\mathcal{L})$ if and only if $\pi_*\mathcal{E}$ is an Ulrich bundle for $(Y,\mathcal{L})$. 
\end{lem}
In the special case where $X$ is a Brauer--Severi variety, we also have:
\begin{prop}
	Let $X$ be a Brauer--Severi variety of period $p$ with splitting field $L$ and $d\geq 1$. A vector bundle $\mathcal{E}$ is an Ulrich bundle for $(X,\mathcal{O}_X(pd))$ if and only if $\mathcal{E}\otimes_k L$ is an Ulrich bundle for $(X\otimes_k L,\mathcal{O}_{X\otimes_k L}(pd))$.
\end{prop}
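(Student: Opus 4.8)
The plan is to prove this base-change compatibility by reducing the Ulrich condition, which is a collection of cohomology vanishings, to a statement that is insensitive to field extension. The key observation is that for a vector bundle $\mathcal{E}$ on $X$ and any line bundle on $X$, the cohomology groups commute with flat base change. Concretely, since $L/k$ is a field extension, hence faithfully flat, we have $H^q(X\otimes_k L,(\mathcal{E}\otimes_k L)(-iH_L))\simeq H^q(X,\mathcal{E}(-iH))\otimes_k L$ for every $q$ and every $i$, where $H=\mathcal{O}_X(pd)$ and $H_L$ denotes its pullback to $X\otimes_k L$. The first step is therefore to set up this isomorphism carefully, noting that $\mathcal{O}_{X\otimes_k L}(pd)$ is precisely the pullback of $\mathcal{O}_X(pd)$ under the projection $X\otimes_k L\to X$, and that twisting by a line bundle commutes with base change.

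Once the base-change isomorphism for cohomology is in place, the equivalence is essentially formal. The second step is to observe that a $k$-vector space $V$ vanishes if and only if $V\otimes_k L$ vanishes, because $L/k$ is faithfully flat (indeed, $\dim_L(V\otimes_k L)=\dim_k V$). Applying this with $V=H^q(X,\mathcal{E}(-iH))$ shows that the vanishing $h^q(X,\mathcal{E}(-iH))=0$ holds for all $q\in\mathbb{Z}$ and $1\leq i\leq\dim(X)$ if and only if the corresponding vanishings hold for $\mathcal{E}\otimes_k L$ on $X\otimes_k L$. Here I would also note that $\dim(X)=\dim(X\otimes_k L)$, so the index range $1\leq i\leq\dim(X)$ is unchanged under base change, and that $\mathcal{E}\otimes_k L$ is again a vector bundle on $X\otimes_k L$ since local freeness is preserved by flat base change. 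Combining these two facts yields both implications at once.

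I do not expect a serious obstacle here; the statement is a routine consequence of flat base change for coherent cohomology together with faithful flatness of a field extension. The only point requiring mild care is bookkeeping: making sure that the polarization $H=\mathcal{O}_X(pd)$ used to define the Ulrich condition on $X$ genuinely pulls back to the polarization $\mathcal{O}_{X\otimes_k L}(pd)$ used on the right-hand side, which is guaranteed because $\mathcal{O}_X(pd)$ is by definition a line bundle on $X$ that becomes $\mathcal{O}_{\mathbb{P}^n}(pd)$ after splitting (Theorem~2.1), and because the formation of $\mathcal{O}_X(pd)$ is compatible with base change along $k\subset L$. With these identifications the proof amounts to citing flat base change and faithful flatness, so the argument is short.
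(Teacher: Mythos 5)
Your argument is correct and coincides with the paper's own proof: both rest on the base-change isomorphism $H^q(X,\mathcal{F})\otimes_k L\simeq H^q(X\otimes_k L,\mathcal{F}\otimes_k L)$, the fact that a $k$-vector space vanishes iff its extension of scalars does, and the compatibility of the polarization $\mathcal{O}_X(pd)$ (and its very ampleness) with base change. No discrepancies to report.
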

\begin{proof}
	The Brauer--Severi variety $X$ is embedded into $\mathbb{P}^N$ via $\mathcal{O}_X(pd)$ with the morphism $\phi_{pd}$ given in Theorem 2.1. Note that $H^i(X,\mathcal{F})\otimes_k E\simeq H^i(X\otimes_k E,\mathcal{F}\otimes_k E)$ for any coherent sheaf $\mathcal{F}$ and any field extension $k\subset E$. The assertion then follows from the fact that $\mathcal{O}_X(pd)$ is very ample if and only if $\mathcal{O}_{X\otimes_k L}(pd)$ is (see \cite{LIV}, Lemma 3.2 (2)).
\end{proof}
\begin{prop}[see \cite{BV}, Corollary 3.2]
	Let $X\subset \mathbb{P}$ be a smooth variety of dimension $n$, carrying an Ulrich bundle $\mathcal{F}$ of rank $r$. For every $d\geq 1$, $(X,\mathcal{O}_X(d))$ carries an Ulrich bundle of rank $rn!$. 
\end{prop}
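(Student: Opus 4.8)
The plan is to move a single Ulrich bundle across a general finite linear projection and thereby reduce the whole assertion to one explicit computation on the Veronese--embedded $(\mathbb{P}^n,\mathcal{O}(d))$. First I would fix the polarization $H=\mathcal{O}_X(1)$ and choose a general linear projection of the embedding of $X$ by $H$; disjointness of the projection centre from $X$ makes it a finite surjective morphism $\pi\colon X\to\mathbb{P}^n$ with $\pi^*\mathcal{O}_{\mathbb{P}^n}(1)=H$ (such a centre exists over infinite $k$, the finite-field case being handled by base change and descent). Applying Lemma 3.1 to $\pi$ with $\mathcal{L}=\mathcal{O}_{\mathbb{P}^n}(1)$, the hypothesis that $\mathcal{F}$ is Ulrich for $(X,H)=(X,\pi^*\mathcal{O}(1))$ is equivalent to $\pi_*\mathcal{F}$ being an Ulrich bundle for $(\mathbb{P}^n,\mathcal{O}(1))$. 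Since the Ulrich bundles for the linear polarization of $\mathbb{P}^n$ are exactly the trivial ones (the associated maximal Cohen--Macaulay modules over the polynomial ring are free and generated in a single degree), I obtain $\pi_*\mathcal{F}\cong\mathcal{O}_{\mathbb{P}^n}^{\oplus a}$ for some $a$.

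Granting an Ulrich bundle $\mathcal{G}$ of rank $n!$ for $(\mathbb{P}^n,\mathcal{O}(d))$, I would then set $\mathcal{E}:=\mathcal{F}\otimes\pi^*\mathcal{G}$, a bundle of rank $r\cdot n!$, and verify the Ulrich property for $(X,\mathcal{O}_X(d))$ by a direct cohomological computation. Because $\mathcal{O}_X(d)=H^{\otimes d}=\pi^*\mathcal{O}_{\mathbb{P}^n}(d)$, for $1\le i\le n$ one has $\mathcal{E}\otimes\mathcal{O}_X(-id)=\mathcal{F}\otimes\pi^*(\mathcal{G}(-id))$, and since $\pi$ is finite the projection formula together with $\pi_*\mathcal{F}\cong\mathcal{O}^{\oplus a}$ yields
\begin{equation*}
H^q\bigl(X,\mathcal{E}\otimes\mathcal{O}_X(-id)\bigr)\cong H^q\bigl(\mathbb{P}^n,(\pi_*\mathcal{F})\otimes\mathcal{G}(-id)\bigr)\cong H^q\bigl(\mathbb{P}^n,\mathcal{G}(-id)\bigr)^{\oplus a}.
\end{equation*}
These groups vanish for every $q$ and every $1\le i\le n$ exactly because $\mathcal{G}$ is Ulrich for $(\mathbb{P}^n,\mathcal{O}(d))$; hence $\mathcal{E}$ is an Ulrich bundle of rank $rn!$ for $(X,\mathcal{O}_X(d))$. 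At this point everything has collapsed to the single base case, and I expect that case to carry the genuine content.

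To produce the rank-$n!$ Ulrich bundle on $(\mathbb{P}^n,\mathcal{O}(d))$ I would pass to $(\mathbb{P}^1)^n$. Its Segre embedding by $\mathcal{O}(1,\dots,1)$ is a smooth $n$-fold of degree $n!$, so a general linear projection gives a finite surjective morphism $F\colon(\mathbb{P}^1)^n\to\mathbb{P}^n$ of degree $n!$ with $F^*\mathcal{O}_{\mathbb{P}^n}(1)=\mathcal{O}(1,\dots,1)$ and hence $F^*\mathcal{O}_{\mathbb{P}^n}(d)=\mathcal{O}(d,\dots,d)$. On $(\mathbb{P}^1)^n$ I would take the line bundle
\begin{equation*}
\mathcal{M}:=\mathcal{O}_{(\mathbb{P}^1)^n}(d-1,\,2d-1,\,\dots,\,nd-1),
\end{equation*}
which is Ulrich for $((\mathbb{P}^1)^n,\mathcal{O}(d,\dots,d))$: for each $1\le i\le n$ the $i$-th tensor factor of $\mathcal{M}\otimes\mathcal{O}(-id,\dots,-id)$ equals $\mathcal{O}_{\mathbb{P}^1}((id-1)-id)=\mathcal{O}_{\mathbb{P}^1}(-1)$, which is acyclic, so by the Künneth formula all cohomology of $\mathcal{M}\otimes\mathcal{O}(-id,\dots,-id)$ vanishes. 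Applying Lemma 3.1 to $F$ with $\mathcal{L}=\mathcal{O}_{\mathbb{P}^n}(d)$, the bundle $\mathcal{G}:=F_*\mathcal{M}$ is Ulrich for $(\mathbb{P}^n,\mathcal{O}(d))$; as $F$ is finite flat of degree $n!$ and $\mathcal{M}$ is a line bundle, $\mathcal{G}$ has rank $n!$.

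The main obstacle is exactly this base case, and what makes it work are two numerical coincidences that pin the rank to be precisely $n!$: the Segre variety $(\mathbb{P}^1)^n$ has degree $n!$, and the $n$ required acyclicity shifts $id-1$ $(1\le i\le n)$ can be assigned bijectively to the $n$ factors, so that $\mathcal{M}$ may be chosen to be a line bundle rather than a higher-rank bundle. The remaining points are routine: the two general linear projections are defined over $k$ when $k$ is infinite (and over finite $k$ after a harmless base change and descent), and a finite surjective morphism between smooth varieties of equal dimension is automatically flat, so that $\pi_*\mathcal{F}$, $F_*\mathcal{M}$ and $\mathcal{E}$ are genuinely locally free of the ranks claimed.
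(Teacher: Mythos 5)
The paper gives no proof of this proposition --- it is quoted directly from \cite{BV}, Corollary 3.2 --- and your argument is a correct reconstruction of exactly that proof: a finite linear projection $\pi\colon X\to\mathbb{P}^n$ with $\pi_*\mathcal{F}$ trivial (Ulrich for the linear polarization), tensoring $\mathcal{F}$ with the pullback of a rank-$n!$ Ulrich bundle on $(\mathbb{P}^n,\mathcal{O}(d))$, itself obtained by pushing forward $\mathcal{O}(d-1,2d-1,\dots,nd-1)$ from $(\mathbb{P}^1)^n$. The one soft spot is your aside on finite base fields: a $k$-rational centre of projection disjoint from $X$ need not exist there, and ``base change and descent'' does not obviously return a bundle of the same rank over $k$ (pushing forward along a field extension multiplies the rank); but this caveat is inherited from the cited source and does not affect the argument over infinite fields, which is the setting in which the statement is used.
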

%\begin{rema}
%\textnormal{Proposition 3.3 holds also for any splitting field. For finite splitting fields $L$, the assertion is an application of Lemma 3.2, since the projection $\pi\colon X\otimes_k L\rightarrow X$ is finite and surjective.}
%\end{rema}
\section{Weak exceptional collections and semiorthogonal decompositions}
In this section we recall the definition of a full weak exceptional collection, a concept introduced in \cite{ORLOV} . 
Let $\mathcal{D}$ be a triangulated category and $\mathcal{C}$ a triangulated subcategory. The subcategory $\mathcal{C}$ is called \emph{thick} if it is closed under isomorphisms and direct summands. Note that there are different definitions of thick subcategory in the literature. For a subset $A$ of objects of $\mathcal{D}$ we denote by $\langle A\rangle$ the smallest full thick subcategory of $\mathcal{D}$ containing the elements of $A$. For a smooth projective variety $X$ over $k$, we denote by $D^b(X)$ the bounded derived category of coherent sheaves on $X$. Moreover, if $B$ is an associated $k$-algebra, we write $D^b(B)$ for the bounded derived category of finitely generated left $B$-modules.
\begin{defi}
	\textnormal{Let $A$ be a division algebra over $k$, not necessarily central. An object $\mathcal{E}^{\bullet}\in D^b(X)$ is called \emph{$A$-exceptional} if $\mathrm{End}(\mathcal{E}^{\bullet})=A$ and $\mathrm{Hom}(\mathcal{E}^{\bullet},\mathcal{E}^{\bullet}[r])=0$ for $r\neq 0$. By \emph{weak exceptional object}, we mean $A$-exceptional for some division algebra $A$ over $k$. %The $w$ in w-exceptional stands for weak (see \cite{OR}). 
		If $A=k$, the object $\mathcal{E}^{\bullet}$ is called \emph{exceptional}. } 
\end{defi}
\begin{defi}
	\textnormal{A totally ordered set $\{\mathcal{E}^{\bullet}_0,...,\mathcal{E}^{\bullet}_n\}$ of weak exceptional objects on $X$ is called an \emph{weak exceptional collection} if $\mathrm{Hom}(\mathcal{E}^{\bullet}_i,\mathcal{E}^{\bullet}_j[r])=0$ for all integers $r$ whenever $i>j$. A weak exceptional collection is \emph{full} if $\langle\{\mathcal{E}^{\bullet}_0,...,\mathcal{E}^{\bullet}_n\}\rangle=D^b(X)$ and \emph{strong} if $\mathrm{Hom}(\mathcal{E}^{\bullet}_i,\mathcal{E}^{\bullet}_j[r])=0$ whenever $r\neq 0$. If the set $\{\mathcal{E}^{\bullet}_0,...,\mathcal{E}^{\bullet}_n\}$ consists of exceptional objects it is called \emph{exceptional collection}.}
\end{defi}
The notion of a full exceptional collection is a special case of what is called a semiorthogonal decomposition of $D^b(X)$. Recall that a full triangulated subcategory $\mathcal{D}$ of $D^b(X)$ is called \emph{admissible} if the inclusion $\mathcal{D}\hookrightarrow D^b(X)$ has a left and right adjoint functor. 
\begin{defi}
	\textnormal{Let $X$ be a smooth projective variety over $k$. A sequence $\mathcal{D}_0,...,\mathcal{D}_n$ of full triangulated subcategories of $D^b(X)$ is called \emph{semiorthogonal} if all $\mathcal{D}_i\subset D^b(X)$ are admissible and $\mathcal{D}_j\subset \mathcal{D}_i^{\perp}=\{\mathcal{F}^{\bullet}\in D^b(X)\mid \mathrm{Hom}(\mathcal{G}^{\bullet},\mathcal{F}^{\bullet})=0$, $\forall$ $ \mathcal{G}^{\bullet}\in\mathcal{D}_i\}$ for $i>j$. Such a sequence defines a \emph{semiorthogonal decomposition} of $D^b(X)$ if the smallest full thick subcategory containing all $\mathcal{D}_i$ equals $D^b(X)$.}
\end{defi}
\noindent
For a semiorthogonal decomposition we write $D^b(X)=\langle \mathcal{D}_0,...,\mathcal{D}_n\rangle$.
\begin{rema}
	\textnormal{Let $\mathcal{E}^{\bullet}_0,...,\mathcal{E}^{\bullet}_n$ be a full weak exceptional collection on $X$. It is easy to verify that by setting $\mathcal{D}_i=\langle\mathcal{E}^{\bullet}_i\rangle$ one gets a semiorthogonal decomposition $D^b(X)=\langle \mathcal{D}_0,...,\mathcal{D}_n\rangle$. Notice that $\mathcal{D}_i\simeq D^b(\mathrm{End}(\mathcal{E}^{\bullet}_i))$, where the equivalence is obtained by sending the complex $\mathrm{End}(\mathcal{E}^{\bullet}_i)$ concentrated in degree $0$ to $\mathcal{E}^{\bullet}_i$.}
\end{rema}

\section{Ulrich complexity of Brauer--Severi varieties and twisted flags}
The following section contains some observations that enable us to tackle questions 1), 2) and the question whether $\mathrm{uc}(\phi_p(X))=\mathrm{rdim}(X)+1$ if period equals index. Throughout this section, we want to relate the Ulrich complexity of $X$ to that of $X\otimes_k E$ for a suitable field extension $k\subset E$.  
\begin{thm}
	Let $X$ be a Brauer--Severi variety of dimension $n$ over a field $k$. Denote by $p$ the period of $X$ and fix an arbitrary integer $d\geq 1$. Let $E$ be a minimal separable splitting field of $X$. Then
	\begin{eqnarray*}
		\mathrm{uc}(v_{pd}(\mathbb{P}^n_{\bar{k}}))\leq \mathrm{uc}(v_{pd}(\mathbb{P}_E^n))\leq \mathrm{uc}(\phi_{pd}(X)))\leq \mathrm{ind}(X)\cdot \mathrm{uc}(v_{pd}(\mathbb{P}_E^n)).
		\end{eqnarray*}
	If every prime $\leq n$ divides $pd$, one has 
	\begin{eqnarray*}
		n!\leq \mathrm{uc}(\phi_{pd}(X))\leq \mathrm{ind}(X)\cdot n!
	\end{eqnarray*}
\end{thm}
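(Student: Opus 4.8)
The plan is to establish the chain of inequalities by relating Ulrich bundles on $X$ to Ulrich bundles on its base changes, exploiting Proposition 3.2 together with standard descent and restriction-of-scalars arguments for the central simple algebra governing $X$. I would treat the four inequalities separately.

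For the leftmost inequality $\mathrm{uc}(v_{pd}(\mathbb{P}^n_{\bar{k}}))\leq \mathrm{uc}(v_{pd}(\mathbb{P}^n_E))$, the idea is that any Ulrich bundle $\mathcal{E}$ for $(\mathbb{P}^n_E, \mathcal{O}(pd))$ base changes to an Ulrich bundle $\mathcal{E}\otimes_E\bar{k}$ for $(\mathbb{P}^n_{\bar k},\mathcal{O}(pd))$ of the same rank, using the cohomology base-change compatibility $H^i(X,\mathcal{F})\otimes_k E\simeq H^i(X\otimes_k E,\mathcal{F}\otimes_k E)$ already invoked in the proof of Proposition 3.2; since $E\subset\bar k$ this preserves vanishing, so the minimal rank cannot increase. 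For the middle inequality $\mathrm{uc}(v_{pd}(\mathbb{P}^n_E))\leq \mathrm{uc}(\phi_{pd}(X))$, I would take an Ulrich bundle $\mathcal{E}$ for $(X,\mathcal{O}_X(pd))$ of minimal rank $\mathrm{uc}(\phi_{pd}(X))$ and apply Proposition 3.2 (with $L=E$): $\mathcal{E}\otimes_k E$ is an Ulrich bundle for $(X\otimes_k E,\mathcal{O}(pd))=(\mathbb{P}^n_E,\mathcal{O}(pd))$ of the same rank, using that after base change to the splitting field $\phi_{pd}$ becomes the Veronese $v_{pd}$ by Theorem 2.1. This shows an Ulrich bundle of rank $\mathrm{uc}(\phi_{pd}(X))$ exists on $v_{pd}(\mathbb{P}^n_E)$, giving the bound.

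The main work is the rightmost inequality $\mathrm{uc}(\phi_{pd}(X))\leq \mathrm{ind}(X)\cdot\mathrm{uc}(v_{pd}(\mathbb{P}^n_E))$, which requires \emph{descending} an Ulrich bundle rather than ascending one. Here I would start from a minimal-rank Ulrich bundle on $\mathbb{P}^n_E$ and use the finite surjective morphism $\pi\colon X\otimes_k E\to X$ obtained from $\mathrm{Spec}(E)\to\mathrm{Spec}(k)$; since $[E:k]=\mathrm{ind}(X)$ (as $E$ is a minimal separable splitting field, its degree equals the index by the result cited in Section 2, \cite{GSV}, Corollary 4.5.9), Lemma 3.1 applies to $\pi$ with the very ample $\mathcal{O}_X(pd)$, giving that $\pi_*$ of an Ulrich bundle of rank $r$ on $X\otimes_k E=\mathbb{P}^n_E$ is an Ulrich bundle on $X$ of rank $r\cdot[E:k]=r\cdot\mathrm{ind}(X)$. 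Taking $r=\mathrm{uc}(v_{pd}(\mathbb{P}^n_E))$ yields the claimed bound. The subtle point, which I expect to be the main obstacle, is verifying that $\pi$ is genuinely finite surjective and that $\pi^*\mathcal{O}_X(pd)=\mathcal{O}_{\mathbb{P}^n_E}(pd)$, so that Lemma 3.1 is applicable in exactly this form; one must also check that an Ulrich bundle on $\mathbb{P}^n_E$ defined over $E$ is the same datum as a bundle on $X\otimes_k E$, which it is by the identification in Theorem 2.1.

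For the final two-sided estimate, I would specialize: when every prime $\leq n$ divides $pd$, a theorem on Ulrich bundles for Veronese embeddings forces $\mathrm{uc}(v_{pd}(\mathbb{P}^n_{\bar k}))=n!$. The lower bound $n!\leq\mathrm{uc}(\phi_{pd}(X))$ then follows from the first and second inequalities of the chain (which sandwich $\mathrm{uc}(v_{pd}(\mathbb{P}^n_E))$ between $\mathrm{uc}(v_{pd}(\mathbb{P}^n_{\bar k}))$ and $\mathrm{uc}(\phi_{pd}(X))$), while the upper bound $\mathrm{uc}(\phi_{pd}(X))\leq\mathrm{ind}(X)\cdot n!$ follows from the fourth inequality combined with $\mathrm{uc}(v_{pd}(\mathbb{P}^n_E))=n!$ under the same divisibility hypothesis. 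The one thing to pin down carefully is the precise statement guaranteeing $\mathrm{uc}$ of the Veronese equals $n!$ over an arbitrary (algebraically closed, or splitting) field under the divisibility condition, and to confirm it transfers between $\bar k$ and $E$ so that the sandwich collapses the two outer terms to $n!$.
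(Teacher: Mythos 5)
Your proposal is correct and follows essentially the same route as the paper: base change (Proposition 3.2) for the two lower inequalities, pushforward along the finite morphism $\pi\colon X\otimes_k E\to X$ of degree $[E:k]=\mathrm{ind}(X)$ (Lemma 3.1) for the upper one, and the known fact that $\mathrm{uc}(v_{pd}(\mathbb{P}^n_{\bar k}))=n!$ under the divisibility hypothesis for the final estimate. The only input you leave as a pointer — why every prime $q\leq n$ dividing $pd$ forces $n!\leq\mathrm{uc}(v_{pd}(\mathbb{P}^n_{\bar k}))$ — is exactly the Euler-characteristic divisibility argument from Bl\"aser--Eisenbud--Schreyer that the paper recalls, so nothing essential is missing.
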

\begin{proof}
 Notice that $\phi_{pd}(X)_E\simeq v_{pd}(\mathbb{P}^n_E)$, according to Theorem 2.1. It is well known that there is a Ulrich bundle  on $v_{pd}(\mathbb{P}^n_E)$ of rank $n!$ (see \cite{BV}). Now take an Ulrich bundle $\mathcal{E}$ of minimal rank. Denote the rank of $\mathcal{E}$ by $\mathrm{uc}(v_{pd}(\mathbb{P}^n))$. Since the map $\pi:X\otimes_k E\rightarrow X$ is finite, Lemma 3.1 provides us with an Ulrich bundle $\pi_*\mathcal{E}$ for $(X,\mathcal{O}_X(pd))$. The rank of $\pi_*\mathcal{E}$ is given as $\mathrm{rk}(\mathcal{E})\cdot \mathrm{ind}(X)$. %Let us determine the rank of $\pi_*\mathcal{E}$. Let $t:=[E:k]$, then $t=\mathrm{ind}(X)\cdot s$. Notice that $\pi^*\pi_*\mathcal{E}\simeq \bigoplus_{g\in G}g^*\mathcal{E}$ for $G=\mathrm{Gal}(E|k)$ and 
	Hence $\mathrm{rk}(\pi_*\mathcal{E})=\mathrm{ind}(X)\cdot \mathrm{uc}(v_{pd}(\mathbb{P}_E^n))$. This shows
	\begin{eqnarray*}
		\mathrm{uc}(\phi_{pd}(X))\leq \mathrm{ind}(X)\cdot \mathrm{uc}(v_{pd}(\mathbb{P}^n_E)).
		\end{eqnarray*}
	
	On the other hand, there can not be an Ulrich bundle $\mathcal{F}$ on $\phi_{pd}(X)$ of rank strictly smaller than $\mathrm{uc}(v_{pd}(\mathbb{P}_E^n))$, since $\mathcal{F}\otimes_k E$ would give an Ulrich bundle on $v_{pd}(\mathbb{P}^n_E)$ of rank strictly smaller than $	\mathrm{uc}(v_{pd}(\mathbb{P}_E^n))$ (according to Proposition 3.2), which is impossible. It is an easy observation that $\mathrm{uc}(v_{pd}(\mathbb{P}^n_K))\leq \mathrm{uc}(v_{pd}(\mathbb{P}_k^n))$ for any finite field extension $k\subset K$. Hence $\mathrm{uc}(v_{pd}(\mathbb{P}^n_{\bar{k}}))\leq \mathrm{uc}(v_{pd}(\mathbb{P}_E^n))$. 
	This proves the first statement. 
	The second statement follows from \cite{BES}, Theorem 5.1. For the convenience of the reader, let us recall the argument. Assuming that every prime $q \leq n$ divides $pd$, we get 
	\begin{eqnarray*}
		\prod_{j=1}^n(pdj+1)\equiv 1 \ \mathrm{mod} \ q.
		\end{eqnarray*}
	Now, if $\mathcal{F}$ is an Ulrich bundle on $v_{pd}(\mathbb{P}_{\bar{k}}^n)$, the Euler chracteristic of $v_{pd}^{*}\mathcal{F}$ is given by
	\begin{eqnarray*}
		\chi (v_{pd}^*\mathcal{F}(t))=\frac{\mathrm{rk}(\mathcal{F})}{n!}\prod_{j=1}^n(pdj+t).
		\end{eqnarray*}
	We conclude that the $q$ part of $n!$ divides $\mathrm{rk}(\mathcal{F})$. This implies $n!\leq \mathrm{uc}(v_{pd}(\mathbb{P}^n_{\bar{k}}))$. Since $\mathrm{uc}(v_{pd}(\mathbb{P}_{\bar{k}}^n))= n!$, the second statement follows. 
\end{proof}
\begin{rema}
	\textnormal{Let $X$ be as in Theorem 5.1. Since $\mathrm{uc}(v_{pd}(\mathbb{P}^n_k))\leq n!$ for any field $k$, we get 
	\begin{eqnarray*}
		\mathrm{uc}(v_{pd}(\mathbb{P}_{\bar{k}}^n))\leq \mathrm{uc}(\phi_{pd}(X)))\leq \mathrm{ind}(X)\cdot  n!
	\end{eqnarray*}
	If $k=\mathbb{R}$, it follows  
	\begin{eqnarray*}
		\mathrm{uc}(v_{pd}(\mathbb{P}_{\mathbb{C}}^n))\leq \mathrm{uc}(\phi_{pd}(X)))\leq 2\cdot \mathrm{uc}(v_{pd}(\mathbb{P}_{\mathbb{C}}^n))\leq 2n!
	\end{eqnarray*} 
since the minimal seperable splitting field is $\mathbb{C}$.}
\end{rema}
\begin{exam}
	\textnormal{Let $X$ be a non-split Brauer--Severi curve over $k$. The period of $X$ is two and $\mathrm{uc}(\phi_{2d}(X)))=2$ (see \cite{NV1}). This also follows from Remark 5.2. Indeed, we have $\mathrm{uc}(v_{2d}(\mathbb{P}^1_{\bar{k}}))\leq \mathrm{uc}(\phi_{2d}(X))\leq 2$. Since there cannot be a rank one Ulrich bundle on a non-split Brauer--Severi curve, it follows that $\mathrm{uc}(\phi_{2d}(X)))=2$.}
\end{exam}
\begin{exam}
	\textnormal{Let $X$ be non-split Brauer--Severi variety surface over $k$. The degree of the corresponding central simple algebra is three and therefore $p=3$. Since period equals index, and since $\mathrm{uc}(v_{3d}(\mathbb{P}_{\bar{k}}^2))=2$ (see \cite{CMR}), we have $2\leq \mathrm{uc}(\phi_{3d}(X))\leq 6$, according to Theorem 5.1. If $3d$ is even, we have $\mathrm{uc}(\phi_{3d}(X))\in\{2,4,6\}$. If $3d$ is odd, we have  $\mathrm{uc}(\phi_{3d}(X))\in\{2,3,4,5,6 \}$. This follows from the results in \cite{CMR} (see also Introduction of \cite{DF}). %Now, as the period divides the rank of an Ulrich bundle, we finally get $\mathrm{uc}(\phi_{3d}(X))=6$ if $3d$ is even and $\mathrm{uc}(\phi_{3d}(X))\in\{3,6\}$ if $3d$ is odd.
	%In the special case $d=1$, we conclude that $\mathrm{uc}(\phi_{3}(X))\in\{3,6\}$. 
	Note that $\mathrm{rdim}(X)=2$, according to \cite{NV3}, Theorem 1.4. This shows that $\mathrm{rdim}(X)+1=3\neq  \mathrm{uc}(\phi_{3d}(X))$ if $3d$ is even. } 
\end{exam}
\begin{exam}
	\textnormal{Let $X$ be a non-split Brauer--Severi variety of dimension three. Then $p=2,4$. So we consider $\mathrm{uc}(\phi_{2d}(X))$ and $\mathrm{uc}(\phi_{4d}(X))$. Let us denote by $\bar{g}\in \{0,2,4\}$ the remainder of the division of $pd$ by $6$. We first consider the case where period equals two. If $\bar{g}=2,4$, \cite{DF}, Theorem 1 and Theorem 5.1 from above tell us that $\mathrm{uc}(\phi_{pd}(X))\in \{2,4\}$. If $\bar{g}=0$, we obtain $\mathrm{uc}(\phi_{pd}(X))\in \{6,12\}$. Now if the period equals four and $\bar{g}=2,4$, we find $\mathrm{uc}(\phi_{pd}(X))\in \{2,4,6,8\}$. If $\bar{g}=0$, we have $\mathrm{uc}(\phi_{pd}(X))\in \{6,12,24\}$. Summarizing, we finally obtain 
	\begin{eqnarray*}
		\mathrm{uc}(\phi_{pd}(X))\in \begin{cases}\{2,4\} & \text{if} \;\;\;\;\; p=2, \bar{g}=2,4,\\
			\{6,12\} & \text{if} \;\;\;\;\; p=2, \bar{g}=0,\\
			\{2,4,6,8\}& \text{if} \;\;\;\;\;\; p=4, \bar{g}=2,4,\\
			\{6,12,24\}& \text{if} \;\;\;\;\;\; p=4, \bar{g}=0.
		\end{cases}
	\end{eqnarray*}
Now, let us consider $\mathrm{rdim}(X)$. Let $A$ be the central simple algebra corresponding to $X$. Recall that $\mathrm{deg}(A)=$ and since $p=2,4$, we have $\mathrm{ind}(A)\in\{2,4\}$. Now, from \cite{NV3}, Theorem 1.4, we conclude $\mathrm{rdim}(X)+1\in\{2,4\}$. So in the case $p=\mathrm{ind}(A)=4$ and $pd=12$ we have $\mathrm{rdim}(X)+1\neq \mathrm{uc}(\phi_{pd}(X))$.  }
\end{exam}
\begin{exam}
	\textnormal{For a non-split Brauer--Severi variety in dimension $\geq 4$ not much can be said. If $\mathrm{dim}(X)=5$ and $p=2$, one has $8\leq \mathrm{uc}(\phi_{2}(X))$ (see \cite{DF}, Remark 3). Now consider central simple algebras over $\mathbb{R}$. Let $D$ be a quaternion algebra over $\mathbb{R}$ and $A=\mathrm{Mat}_3(D)$. Then $\mathrm{deg}(A)=6$ and hence $A$ corresponds to a Brauer--Severi variety of dimension $5$. Its period and index is two. According to \cite{NV3}, Proposition 4.1 it follows $\mathrm{rdim}(X)=1$. And since $8\leq \mathrm{uc}(\phi_{2}(X))$, we find $\mathrm{uc}(\phi_{2}(X))\neq \mathrm{rdim}(X)+1$. There is also an alternative argument using the main result in \cite{LR}. More precisely, \cite{LR}, Theorem 1 implies that $4\leq \mathrm{uc}(\phi_{2}(X))$. Again, since $\mathrm{rdim}(X)=1$, we obtain $\mathrm{uc}(\phi_{2}(X))\neq \mathrm{rdim}(X)+1$. This gives a negative answer to the Question mentioned in the introduction. }
\end{exam}
\begin{cor}
	Let $p$ be prime and $X$ a non-split Brauer--Severi variety of dimension $p-1$. Set $d=(p-1)!$. Then 
	\begin{eqnarray*}
		\mathrm{rdim}(X)<\mathrm{uc}(\phi_{p!}(X)).
		\end{eqnarray*}
\end{cor}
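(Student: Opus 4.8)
The plan is to recognize $\phi_{p!}(X)$ as one of the embeddings covered by Theorem 5.1 and then compare the resulting lower bound for $\mathrm{uc}$ with an upper bound for $\mathrm{rdim}$. First I would fix the arithmetic of $X$. Since $\mathrm{dim}(X)=p-1$, the associated central simple algebra has degree $p$, so $\mathrm{ind}(X)$ divides $p$; as $X$ is non-split and $p$ is prime, this forces $\mathrm{ind}(X)=p$, and because period and index have the same prime factors, also $\mathrm{per}(X)=p$. With $d=(p-1)!$ we then have $p!=p\cdot d$, so $\phi_{p!}(X)=\phi_{pd}(X)$ is precisely the polarization of Theorem 5.1 with $n=\mathrm{dim}(X)=p-1$.

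Next I would check the divisibility hypothesis of the second assertion of Theorem 5.1. Any prime $q\le n=p-1$ is smaller than $p$, hence appears among the factors of $1\cdot 2\cdots (p-1)$ and therefore divides $p!=pd$. Thus the condition ``every prime $\le n$ divides $pd$'' holds, and Theorem 5.1 yields $n!\le \mathrm{uc}(\phi_{pd}(X))$, i.e. $(p-1)!\le \mathrm{uc}(\phi_{p!}(X))$.

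For the opposite side I would bound the representability dimension from above. Since $\mathrm{ind}(X)=p$, the estimate $\mathrm{rdim}(X)\le \mathrm{ind}(X)-1$ recalled in the introduction gives $\mathrm{rdim}(X)\le p-1$ (alternatively one may use the trivial bound $\mathrm{rdim}(X)\le \mathrm{dim}(X)=p-1$). Combining the two estimates, the desired strict inequality $\mathrm{rdim}(X)<\mathrm{uc}(\phi_{p!}(X))$ reduces to the purely numerical fact $p-1<(p-1)!$, which holds for every prime $p\ge 5$.

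The main obstacle is to secure the \emph{strict} inequality at the small primes, where the two sandwiching bounds collapse to $p-1=(p-1)!$. For $p=2$ this is harmless: a non-split Brauer--Severi variety admits no Ulrich line bundle, so $\mathrm{uc}(\phi_2(X))\ge 2>1=\mathrm{rdim}(X)$ (compare Example 5.3). The delicate case is $p=3$, where $\mathrm{rdim}(X)=2=(p-1)!$ while Example 5.4 only gives $\mathrm{uc}(\phi_6(X))\in\{2,4,6\}$; here one must exclude the value $2$, that is, show that no rank-two Ulrich bundle on $(\mathbb{P}^2_{\bar k},\mathcal{O}(6))$ descends to the non-split surface $X$. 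I expect this last point to require a genuine descent argument (a Brauer-class obstruction to realizing the split rank-two bundle over $k$) rather than the numerology used for $p\ge 5$, and it is the step I would concentrate on.
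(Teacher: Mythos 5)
Your argument is exactly the paper's: the proof given there is the single chain $\mathrm{rdim}(X)\leq p-1<(p-1)!\leq \mathrm{uc}(\phi_{p!}(X))$, obtained from $\mathrm{rdim}(X)\leq\mathrm{ind}(X)-1=p-1$ and the second assertion of Theorem 5.1 (whose divisibility hypothesis you verify just as the paper intends). So for $p\geq 5$ your proposal reproduces the paper's proof verbatim. The additional point you raise is not a defect of your write-up but of the paper's: the middle inequality $p-1<(p-1)!$ is false for $p=2$ and $p=3$, so the printed proof only covers $p\geq 5$. Your repair for $p=2$ (no Ulrich line bundle on a non-split Brauer--Severi curve, so $\mathrm{uc}(\phi_2(X))\geq 2>1=\mathrm{rdim}(X)$) is correct and consistent with Example 5.3. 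For $p=3$ you correctly observe that the available bounds give $\mathrm{rdim}(X)=2$ and $\mathrm{uc}(\phi_6(X))\in\{2,4,6\}$, so strictness requires ruling out a rank-two Ulrich bundle for $(X,\mathcal{O}_X(6))$ on a non-split Brauer--Severi surface; neither your proposal nor the paper supplies that step, so the corollary as stated remains unjustified at $p=3$. In short: same approach, and your scrutiny of the small primes identifies a genuine gap in the source rather than in your own reasoning.
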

\begin{proof}
	This follows from Theorem 5.1 and \cite{NV3}, Proposition 4.1 and the fact that period equals index in this situation.  Indeed, we have
	\begin{eqnarray*}
		\mathrm{rdim}(X)\leq p-1<(p-1)!\leq \mathrm{uc}(\phi_{p!}(X)).
	\end{eqnarray*}	
\end{proof}
\begin{prop}
Let $X$ be a non-split Brauer--Severi variety of period $p$. If $\mathrm{dim}(X)\geq 4$, then 
\begin{eqnarray*}
	4\leq \mathrm{uc}(\phi_{pd}(X))).
	\end{eqnarray*}	
\end{prop}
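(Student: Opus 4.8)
The plan is to reduce the statement to the split situation over the algebraic closure and then invoke the known lower bounds for Ulrich bundles on Veronese re-embeddings of projective space. Write $n=\dim(X)\ge 4$. Since $X$ is non-split, its period satisfies $p\ge 2$, and hence $pd\ge 2$ for every $d\ge 1$; this is the only place where non-splitness enters the argument, as it guarantees that the relevant Veronese embedding has degree at least two.

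First I would apply the chain of inequalities in Theorem 5.1, which by transitivity yields $\mathrm{uc}(v_{pd}(\mathbb{P}^n_{\bar{k}}))\le \mathrm{uc}(\phi_{pd}(X))$. This reduces the problem to producing the lower bound $\mathrm{uc}(v_{pd}(\mathbb{P}^n_{\bar{k}}))\ge 4$ for the split Veronese variety $v_{pd}(\mathbb{P}^n_{\bar{k}})$ with $n\ge 4$ and $pd\ge 2$. The advantage of this reduction is that we are now working over an algebraically closed field, where the structure of Ulrich sheaves on $(\mathbb{P}^n,\mathcal{O}(e))$ with $e\ge 2$ has been studied directly.

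Next I would invoke \cite{LR}, Theorem 1, which bounds from below the rank of any Ulrich bundle on $(\mathbb{P}^n,\mathcal{O}(e))$ for $e\ge 2$; for $n\ge 4$ this lower bound is at least $4$. Applying it with $e=pd\ge 2$ gives $\mathrm{uc}(v_{pd}(\mathbb{P}^n_{\bar{k}}))\ge 4$, and combining with the reduction above yields $4\le \mathrm{uc}(\phi_{pd}(X))$, as claimed. I would also record that this is consistent with the case $n=5$, $pd=2$ treated in Example 5.6, where the same reference already furnished the bound $4\le \mathrm{uc}(\phi_2(X))$, and that \cite{DF} provides sharper bounds in special ranges.

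The main obstacle is that the elementary divisibility argument used in the proof of Theorem 5.1 is by itself insufficient here. That argument extracts the $q$-part of $n!$ from the rank only when a prime $q\le n$ divides $pd$; in the remaining cases — for instance $pd$ odd with $3\nmid pd$ — the Euler characteristic $\chi(v_{pd}^{*}\mathcal{F}(t))=\frac{\mathrm{rk}(\mathcal{F})}{n!}\prod_{j=1}^n(pdj+t)$ imposes no constraint forcing $\mathrm{rk}(\mathcal{F})\ge 4$. Indeed, for $n=4$ and $pd=3$ integrality of this polynomial only forces $3\mid \mathrm{rk}(\mathcal{F})$, which still permits rank $3$. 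Excluding the small ranks $1,2,3$ uniformly in $pd$ therefore genuinely requires the finer analysis of Ulrich modules over the homogeneous coordinate ring of the Veronese from \cite{LR}, and the one delicate point to verify is that the hypotheses of that theorem apply over $\bar{k}$ to the embedding by $\mathcal{O}(pd)$ for all $pd\ge 2$ and $n\ge 4$.
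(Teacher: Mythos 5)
Your proposal is correct and follows essentially the same route as the paper: the paper's proof is exactly the combination of the lower bound from Theorem 5.1 with \cite{LR}, Theorem 1. Your additional remarks on why the divisibility argument from the proof of Theorem 5.1 alone would not suffice are accurate but not needed for the argument itself.
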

\begin{proof}
This follows from Theorem 5.1 and \cite{LR}, Theorem 1.
\end{proof}
This proposition provides us with examples of Brauer--Severi varieties $X$ of period $p=2,3$ for which there are no rank $p$ Ulrich bundles on $(X,\mathcal{O}_X(p))$. This fact immediately gives negative answers to questions 1) and 2). Take for instance the quaternion algebra $D$ over $\mathbb{R}$ from Example 5.6. The Brauer--Severi variety $X$ corresponding to $A=\mathrm{Mat}_3(D)$ is of dimension 5 and period 2. Moreover, period equals index in this case. Proposition 5.8 tells us that there is no Ulrich bundle of rank 2.  %Furthermore, Remark 4.2 indicates that the Ulrich complexity does not depend on $d$.\\ 
%We now consider the product of two Brauer--Severi varieties $X$ and $Y$. It is well known that $X\times Y$ is a projective bundle over $Y$. So applying the results in 

%\begin{rema}
%	\textnormal{The number $s$ from Theorem 4.1 can be explicitly determined in some cases. For instance, if the corresponding central simple algebra $A$ is cyclic, $s=1$. This happens if the ground field $k$ is a local or global field.}
%\end{rema}
%Since $\mathrm{rdim}(X)+1\leq \mathrm{ind}(X)$ (see \cite{NV3}, Proposition 4.1), we find:
Note that $\mathrm{rdim}(X)\leq \mathrm{ind}(X)-1$ for a Brauer--Severi variety $X$ with equality if for instance $\mathrm{ind}(X)\leq 3$ (see \cite{NV3}, Proposition 4.1 and Theorem 1.4). It is therefore sensible to formulate:
\begin{cor}
		Let $X$ be a Brauer--Severi variety of dimension $n$ over a field $k$. Denote by $p$ the period of $X$ and fix an arbitrary integer $d\geq 1$. Let $E$ be a minimal separable splitting field of $X$ and assume $\mathrm{rdim}(X)+1=\mathrm{ind}(X)$. Then 
		\begin{eqnarray*}
		\mathrm{uc}(v_{pd}(\mathbb{P}_{\bar{k}}^n))\leq \mathrm{uc}(\phi_{pd}(X)))\leq (\mathrm{rdim}(X)+1)\cdot \mathrm{uc}(v_{pd}(\mathbb{P}^n)).
	\end{eqnarray*}
	If every prime $\leq n$ divides $pd$, one has 
	\begin{eqnarray*}
		n!\leq \mathrm{uc}(\phi_{pd}(X))\leq (\mathrm{rdim}(X)+1)\cdot n!
	\end{eqnarray*}
\end{cor}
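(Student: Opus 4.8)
The plan is to deduce this corollary directly from Theorem~5.1, using only the additional arithmetic hypothesis $\mathrm{rdim}(X)+1=\mathrm{ind}(X)$ to rewrite the index appearing in the upper bounds. First I would invoke Theorem~5.1 verbatim for the same data $(X,p,d,E)$, which supplies the chain
\[
\mathrm{uc}(v_{pd}(\mathbb{P}^n_{\bar{k}}))\leq \mathrm{uc}(v_{pd}(\mathbb{P}_E^n))\leq \mathrm{uc}(\phi_{pd}(X))\leq \mathrm{ind}(X)\cdot \mathrm{uc}(v_{pd}(\mathbb{P}_E^n)).
\]
Discarding the second term and keeping the outer three already gives the shape of the first asserted inequality, except that the upper bound still carries the factor $\mathrm{ind}(X)$ rather than $\mathrm{rdim}(X)+1$ (here the last Veronese term is understood over $E$, matching the statement of Theorem~5.1).

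Next I would substitute the hypothesis. Since we assume $\mathrm{rdim}(X)+1=\mathrm{ind}(X)$, replacing $\mathrm{ind}(X)$ by $\mathrm{rdim}(X)+1$ in the upper bound turns it into precisely $(\mathrm{rdim}(X)+1)\cdot \mathrm{uc}(v_{pd}(\mathbb{P}_E^n))$, which is the first claimed inequality. For the second assertion I would repeat the identical substitution in the bound
\[
n!\leq \mathrm{uc}(\phi_{pd}(X))\leq \mathrm{ind}(X)\cdot n!
\]
furnished by the second half of Theorem~5.1 under the hypothesis that every prime $\leq n$ divides $pd$, obtaining $n!\leq \mathrm{uc}(\phi_{pd}(X))\leq (\mathrm{rdim}(X)+1)\cdot n!$.

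There is essentially no analytic obstacle here: the entire content is absorbed into Theorem~5.1, and the only extra ingredient is the equality $\mathrm{rdim}(X)+1=\mathrm{ind}(X)$. The point worth flagging in the write-up is that this equality is not automatic, since in general one only has $\mathrm{rdim}(X)\leq \mathrm{ind}(X)-1$; the corollary should therefore be read as phrasing the index-theoretic estimate of Theorem~5.1 in terms of the categorical representability dimension precisely in those cases (for instance $\mathrm{ind}(X)\leq 3$, by \cite{NV3}, Proposition~4.1 and Theorem~1.4) where the representability dimension is maximal. I would accordingly emphasize that the hypothesis $\mathrm{rdim}(X)+1=\mathrm{ind}(X)$ is exactly what licenses translating the index bound into a representability bound, which is the conceptual purpose of recording this corollary.
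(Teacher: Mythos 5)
Your proposal is correct and is exactly the argument the paper intends: the corollary is stated immediately after Theorem 5.1 with no separate proof, being nothing more than the substitution of $\mathrm{rdim}(X)+1$ for $\mathrm{ind}(X)$ in the bounds of that theorem under the stated hypothesis. Your remark that the displayed $\mathrm{uc}(v_{pd}(\mathbb{P}^n))$ should be read over the splitting field $E$ correctly identifies a small notational slip in the statement.
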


Notice that Theorem 5.1 also holds true for certain twisted flag varieties. We recall briefly the definition of twisted flags and refer to \cite{MPW111} for details. Let $G$ be a semisimple algebraic group over a field $k$ and $G_s=G\otimes_k k^{sep}$. For a parabolic subgroup $P$ of $G_s$, one has a homogeneous variety $G_s/P$. A \emph{twisted flag} is variety $X$ such that $X\otimes_k k^{sep}$ is $G_s$-isomorphic to $G_s/P$ for some $G$ and some parabolic $P$ in $G_s$. Any twisted flag is smooth, absolutely irreducible and reduced. An algebraic group $G'$ is called twisted form of $G$ iff $G'_s\simeq G_s$ iff $G'={_\gamma} G$ for some $\gamma\in Z^1(k,\mathrm{Aut}(G_s))$. The group $G'$ is called an \emph{inner form} of $G$ if there is a $\delta\in Z^1(k,\bar{G}(k^{sep}))$ with $G'={_\delta}G$. Here $\bar{G}=G/Z(G)$. For an arbitrary semisimple $G$ over $k$, there is a unique (up to isomorphism) split semisimple group $G^d$ such that $G_s\simeq G_s^d$. If $G$ is an inner form of $G^d$, then $G$ is said to be of \emph{inner type}. For instance, let $A$ be a central simple algebra over $k$ of degree $n$ and $G=\mathrm{PGL}_1(A)$, then $G_s\simeq \mathrm{PGL}_n$ over $k^{sep}$. Hence $G$ is an inner form of $\mathrm{PGL}_n$. Since $\mathrm{PGL}_n$ is split, $G=\mathrm{PGL}_1(A)$ is of inner type. The inner twisted forms arising from $G=\mathrm{PGL}_1(A)$ can be described very explicitly (see \cite{MPW111}, Section 5). One of these inner twisted forms is the \emph{generalized Brauer--Severi variety}. %If $G$ is an inner form of $G^d$, then $G$ is said to be of \emph{inner type}. 
So let $m\leq n$. The generalized Brauer--Severi variety $\mathrm{BS}(m,A)$ is defined to be the subset of $\mathrm{Grass}_k(mn,A)$ consisting of those subspaces of $A$ which are right ideals of dimension $m\cdot n$ (see \cite{BLA}). 
Recall that $\mathrm{Grass}_k(mn,A)$ is given the structure of a projective variety via the Pl\"ucker embedding (see \cite{BLA})
$$
	\mathrm{Grass}_k(mn,A)\longrightarrow \mathbb{P}(\wedge^{mn}(A)).
$$	
This gives an embedding $\mathrm{BS}(m,A)\rightarrow \mathbb{P}(\wedge^{mn}(A))$ and a very ample line bundle $\mathcal{M}$.
Note that for any $\mathrm{BS}(m,A)$ there exists a finite Galois field extension $E$ such that $\mathrm{BS}(m,A)\otimes_k E\simeq \mathrm{Grass}_E(mn,n^2)\simeq \mathrm{Grass}_E(m,n)$. The Picard group $\mathrm{Pic}(\mathrm{Grass}_E(m,n))$ is isomorphic to $\mathbb{Z}$ and has ample generator $\mathcal{O}(1)\simeq \mathrm{det}(\mathcal{Q})$ with $\mathcal{Q}$ being the universal quotient bundle on $\mathrm{Grass}_E(m,n)$. Recall that $\mathrm{Pic}(\mathrm{BS}(m,A))\simeq\mathbb{Z}$ and that there is a positive generator $\mathcal{L}$ such that $\mathcal{L}\otimes_k E\simeq \mathcal{O}(r)$ for a suitable $r>0$. Since $\mathrm{Pic}(\mathrm{BS}(m,A))$ is cyclic, we have $\mathcal{L}^{\otimes s}\simeq \mathcal{M}$ for a suitable $s>0$. Therefore, $\mathcal{L}$ is ample. From the definition of $\mathrm{BS}(m,A)$ it is clear that $\mathcal{L}$ is also very ample.\\
In general, the inner twisted flags arising from $G=\mathrm{PGL_1}(A)$, where $A$ is a central simple algebra of degree $n$, are varieties denoted by $\mathrm{BS}(n_1,...,n_l,A)$, with $n_1<\cdots<n_l<n$, satisfying $\mathrm{BS}(n_1,...,n_l,A)\otimes_k k^s\simeq \mathrm{Flag}_{k^s}(n_1,...,n_l;n)$. These partial twisted flags parametrize sequences $I_1\subseteq\cdots \subset I_l\subseteq A$ of right ideals with $\mathrm{dim}(I_j)=n\cdot n_j$, for $j=1,...,l$ (see \cite{MPW111}, Section 5). If $E$ is a splitting field of $A$, i.e $A\otimes_k E\simeq M_n(E)$, one has $\mathrm{BS}(n_1,...,n_l,A)\otimes_k E\simeq \mathrm{Flag}_E(n_1,...,n_l;n)$. For details, we refer to \cite{MPW111}. Recall that a flag $\mathbb{F}_L:=\mathrm{Flag}_L(n_1,...,n_l;n)$ over a field $L$ has $l$ projections $p_i\colon \mathrm{Flag}_L(n_1,...,n_l;n)\longrightarrow \mathrm{Grass}_L(l_i,n)$. The Picard group of $\mathbb{F}_L$ is generated by $\mathcal{L}_i=p_i^*\mathcal{O}_{\mathrm{Grass}_{L}(l_i,n)}(1)$. A line bundle $\mathcal{R}$ on $\mathbb{F}_L$ is ample if and only if $\mathcal{R}=\mathcal{L}_1^{\otimes a_1}\otimes\cdots\otimes\mathcal{L}_l^{\otimes a_l}$ with $a_i>0$. We set $\mathcal{O}_{\mathbb{F}_L}(1)=\mathcal{L}_1\otimes\cdots\otimes\mathcal{L}_l$.  
 \begin{prop}[modification of \cite{NOVA}, Proposition 7.]
Let $X$ be a smooth projective geometrically integral variety over a field $k$ and $k\subset E$ be a finite separable field extension. Assume $X\otimes_k E$ is embedded into projective space via $\mathcal{O}_{X\otimes_k E}(1)$, i.e $\mathcal{O}_{X\otimes_k E}(1)=i^*\mathcal{O}_{\mathbb{P}^N}(1)$ for an embedding $i\colon X\otimes_k E\rightarrow \mathbb{P}^N$. Then there is a very ample line bundle $\mathcal{L}$ on $X$, satisfying $\mathcal{L}\otimes_k E\simeq \mathcal{O}_{X\otimes_k E}(r)$ for a suitable $r>0$.  
\end{prop}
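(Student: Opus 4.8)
The plan is to prove the statement by Galois descent for line bundles, the only obstruction living in a relative Brauer group that is killed by passing to a power. First I would pass from $E$ to the Galois closure $L$ of $E/k$ inside a separable closure, set $G=\mathrm{Gal}(L/k)$ and $H=\mathrm{Gal}(L/E)$, and pull the given polarization back to $X_L:=X\otimes_k L$, obtaining an ample class $M:=[\,\mathcal{O}_{X\otimes_k E}(1)\otimes_E L\,]\in\mathrm{Pic}(X_L)$. Since $X$ is smooth, projective and geometrically integral, $X_L$ is integral and proper, so that $H^0(X_L,\mathcal{O}_{X_L}^{*})=L^{*}$; this is exactly what makes the descent machinery available.

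The heart of the argument is to produce a $G$-invariant ample class out of $M$. Because $\mathcal{O}_{X\otimes_k E}(1)$ is defined over $E=L^{H}$, the class $M$ is automatically $H$-invariant. To upgrade this to $G$-invariance I would use that in the cases of interest the natural Galois action of $\mathrm{Gal}(\bar k/k)$ on $\mathrm{Pic}(X_{\bar k})$ is trivial: this holds for the inner twisted forms treated above, in particular for (generalized) Brauer--Severi varieties and the twisted flags of inner type, where $\mathrm{Pic}(X_{\bar k})$ is either $\mathbb{Z}$ or is generated by the Galois-stable polarization $\mathcal{O}(1)=\mathcal{L}_1\otimes\cdots\otimes\mathcal{L}_l$. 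Under this hypothesis $g^{*}M=M$ for all $g\in G$, so $M\in\mathrm{Pic}(X_L)^{G}$. I expect this invariance to be the main obstacle: it genuinely uses the arithmetic of the situation and cannot simply be dropped, since for a general geometrically integral $X$ whose Galois group permutes the factors of $\mathrm{Pic}(X_{\bar k})$ no power of an asymmetric very ample class is invariant; when $G$ merely permutes a Galois orbit of classes one should either impose invariance of $[\mathcal{O}_{X\otimes_k E}(1)]$ or interpret $\mathcal{O}_{X\otimes_k E}(r)$ as the corresponding norm.

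With $M\in\mathrm{Pic}(X_L)^{G}$ in hand, I would invoke the low-degree exact sequence of the Hochschild--Serre spectral sequence for $\mathbb{G}_m$,
$$0\longrightarrow \mathrm{Pic}(X)\longrightarrow \mathrm{Pic}(X_L)^{G}\stackrel{\partial}{\longrightarrow}\mathrm{Br}(L/k),$$
whose exactness relies precisely on $H^0(X_L,\mathcal{O}^{*})=L^{*}$ and on Hilbert~90 ($H^1(G,L^{*})=0$). The obstruction $\partial(M)\in\mathrm{Br}(L/k)$ is annihilated by $[L:k]$, so $\partial\big(M^{\otimes[L:k]}\big)=[L:k]\cdot\partial(M)=0$ and $M^{\otimes[L:k]}$ lies in the image of $\mathrm{Pic}(X)$. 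This produces a line bundle $\mathcal{L}$ on $X$ with $\mathcal{L}\otimes_k L\simeq M^{\otimes[L:k]}$.

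Finally I would descend the conclusion from $L$ to $E$ and verify very ampleness. Setting $r:=[L:k]$, both $\mathcal{L}\otimes_k E$ and $\mathcal{O}_{X\otimes_k E}(r)$ become isomorphic after the faithfully flat base change $E\subset L$; since $H^1(H,L^{*})=0$ by Hilbert~90, the pullback $\mathrm{Pic}(X\otimes_k E)\hookrightarrow\mathrm{Pic}(X_L)$ is injective, whence $\mathcal{L}\otimes_k E\simeq\mathcal{O}_{X\otimes_k E}(r)$ already over $E$. Ampleness of $\mathcal{L}$ follows because $\mathcal{L}\otimes_k L\simeq M^{\otimes r}$ is ample and ampleness descends along faithfully flat extensions, and very ampleness follows in the same way using \cite{LIV}, Lemma~3.2~(2), exactly as in the proof of Proposition~3.2. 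The value of $r$ can then be sharpened to the order of $\partial(M)$ in $\mathrm{Br}(L/k)$, which divides $[L:k]$.
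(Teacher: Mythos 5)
Your argument is essentially the paper's own proof: both descend a power of the polarization via the low-degree exact sequence $0\to\mathrm{Pic}(X)\to\mathrm{Pic}(X\otimes_k k^{sep})^{G}\stackrel{\delta}{\to}\mathrm{Br}(k)$ and kill the Brauer obstruction $\delta(M)$ by raising $M$ to a power equal to (a multiple of) its order, with your passage through the finite Galois closure $L$ and the final $L$-to-$E$ comparison being only cosmetic differences. The one point where you go beyond the paper is in flagging that $G$-invariance of the class of $\mathcal{O}_{X\otimes_k E}(1)$ is a genuine hypothesis rather than automatic; the paper's proof silently places the class in $\mathrm{Pic}(X\otimes_k k^{sep})^{G}$, and your remark that this can fail for a general geometrically integral $X$ (e.g.\ a twisted form of $\mathbb{P}^1\times\mathbb{P}^1$ whose Galois action swaps the factors, polarized asymmetrically) is correct and worth recording, while being harmless for the inner twisted flags and Brauer--Severi varieties to which the proposition is actually applied.
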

 \begin{proof}
	We restate the proof of \cite{NOVA}, Proposition 7 and make minor modifications. Let $G$ be the absolute Galois group. It is well know that there is an exact sequence arising from the Leray spectral sequence 
	\begin{eqnarray*}
		0\longrightarrow \mathrm{Pic}(X)\longrightarrow \mathrm{Pic}(X\otimes_k k^{sep})^G\stackrel{\delta}\longrightarrow \mathrm{Br}(k)\longrightarrow \mathrm{Br}(k(X)).
	\end{eqnarray*}
	Note that every element of $\mathrm{Br}(k)$ has finite order. Now let $\mathcal{O}_{X_E}(m):=\mathcal{O}_{X\otimes_k E}(m)$ with $m>0$ be an element of $\mathrm{Pic}(X\otimes_k E)$ and consider $\mathcal{O}_{X_E}(m)\otimes_E k^{sep}\simeq \mathcal{O}_{X\otimes_k E\otimes_E k^{sep}}(m)$. Now let this line bundle be in the cokernel of the map $\mathrm{Pic}(X)\rightarrow \mathrm{Pic}(X\otimes_k k^{sep})^G$. If this element is trivial, we are done. Indeed, in this case there is an (ample) line bundle $\mathcal{L}$ on $X$ such that $\mathcal{L}\otimes_k k^{sep}\simeq \mathcal{O}_{X\otimes_k E\otimes_E k^{sep}}(m)$. Now, it is easy to see that $\mathcal{L}\otimes _k E\simeq \mathcal{O}_{X_E}(m)$. 
	Assume $\mathcal{O}_{X\otimes_k E\otimes_E k^{sep}}(m)$ is non-trivial. Then $\delta(\mathcal{O}_{X\otimes_k E\otimes_E k^{sep}}(m))$ is a non-trivial Brauer-equivalence class $[B]\in\mathrm{Br}(k)$. If $d>0$ is the order of $[B]$ in $\mathrm{Br}(k)$, we obtain that $\delta(\mathcal{O}_{X\otimes_k E\otimes_E k^{sep}}(md))=[k]$. This implies that there exists a line bundle $\mathcal{L}$ on $X$ such that $\mathcal{L}\otimes_k k^{sep}\simeq\mathcal{O}_{X\otimes_k E\otimes_E k^{sep}}(md)$. Again, it is easy to see that $\mathcal{L}\otimes_k E\simeq \mathcal{O}_{X_E}(md)$. Since $\mathcal{O}_{X\otimes_k E\otimes_E  k^{sep}}(md)$ is very ample, we conclude that $\mathcal{L}$ must be very ample. Therefore, there is a very ample line bundle $\mathcal{L}$ on $X$, satisfying $\mathcal{L}\otimes_k E\simeq \mathcal{O}_{X\otimes_k E}(r)$ for a suitable $r>0$.
\end{proof}
\begin{thm}[Type $A_n$]
	Let $A$ be a degree $n$ central simple algebra over a field $k$ of characteristic zero and $E$ a minimal separable splitting field of $A$. Let $X$ be one of the following twisted flag varieties:
	\begin{eqnarray*}
		\mathrm{BS}(m,A),\;\; \mathrm{BS}(1,n-1,A),\;\; \mathrm{BS}(1,n-2,A),\;\; \mathrm{BS}(2,n-2,A),\;\;\\ 
		\mathrm{BS}(m,m+1,A),\;\;\mathrm{BS}(m,m+2,A).
	\end{eqnarray*}
Then there is a very ample line bundles $\mathcal{L}$ on $X$, satisfying $\mathcal{L}\otimes_k E\simeq \mathcal{O}_{\mathbb{F}_E}(r)$ for suitable $r>0$, such that for any $d>0$ 
\begin{eqnarray*}
	\mathrm{uc}((\mathbb{F}_{\bar{k}},\mathcal{O}_{\mathbb{F}_{\bar{k}}}(rd)))\leq \mathrm{uc}((\mathbb{F}_E,\mathcal{O}_{\mathbb{F}_E}(rd)))\leq \mathrm{uc}((X,\mathcal{L}^{\otimes d}))\leq \mathrm{ind}(A)\cdot  \mathrm{uc}((\mathbb{F}_E,\mathcal{O}_{\mathbb{F}_E}(rd))).
\end{eqnarray*}
%If every prime $\leq \mathrm{dim}(X)$ divides $rd$, one has 
%\begin{eqnarray*}
%	\mathrm{dim}(X)!\leq \mathrm{uc}((X,\mathcal{L}^{\otimes d}))\leq \mathrm{ind}(A)\cdot s\cdot \mathrm{dim}(X)!
%\end{eqnarray*}
\end{thm}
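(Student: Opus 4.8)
The plan is to follow the argument of Theorem 5.1 almost verbatim, replacing the Veronese picture by the flag picture and the finite map $X\otimes_k E\to X$ by its twisted-flag analogue. First I would produce the polarization: since $X\otimes_k E\simeq \mathbb{F}_E$ carries the very ample $\mathcal{O}_{\mathbb{F}_E}(1)$, Proposition 5.10 yields a very ample line bundle $\mathcal{L}$ on $X$ with $\mathcal{L}\otimes_k E\simeq \mathcal{O}_{\mathbb{F}_E}(r)$, whence $\mathcal{L}^{\otimes d}\otimes_k E\simeq \mathcal{O}_{\mathbb{F}_E}(rd)$ for every $d>0$. I would then record two facts that make the middle object of the chain meaningful: that $[E:k]=\mathrm{ind}(A)$ because $E$ is a minimal separable splitting field (Section 2), and that $(\mathbb{F}_E,\mathcal{O}_{\mathbb{F}_E}(rd))$ actually admits an Ulrich bundle. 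The latter is where the restriction to the listed flag types enters: for Grassmannians $\mathrm{BS}(m,A)$ and the specific two-step flags in the list the split flag carries an Ulrich bundle for $\mathcal{O}_{\mathbb{F}_E}(1)$ by the known existence results (e.g.\ \cite{CMRV}, \cite{COS}, \cite{NOVA}), and Proposition 3.3 upgrades this to the re-embedding by $\mathcal{O}_{\mathbb{F}_E}(rd)$.

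For the rightmost inequality I would push forward along the finite surjective projection $\pi\colon X\otimes_k E\to X$, which satisfies $\pi^{*}(\mathcal{L}^{\otimes d})\simeq \mathcal{O}_{\mathbb{F}_E}(rd)$. Taking an Ulrich bundle $\mathcal{E}$ of minimal rank on $(\mathbb{F}_E,\mathcal{O}_{\mathbb{F}_E}(rd))$, Lemma 3.1 makes $\pi_{*}\mathcal{E}$ an Ulrich bundle for $(X,\mathcal{L}^{\otimes d})$; since $\pi$ is the base change of $\mathrm{Spec}\,E\to\mathrm{Spec}\,k$ and hence has degree $[E:k]=\mathrm{ind}(A)$, its rank is $\mathrm{ind}(A)\cdot \mathrm{uc}((\mathbb{F}_E,\mathcal{O}_{\mathbb{F}_E}(rd)))$, giving $\mathrm{uc}((X,\mathcal{L}^{\otimes d}))\leq \mathrm{ind}(A)\cdot \mathrm{uc}((\mathbb{F}_E,\mathcal{O}_{\mathbb{F}_E}(rd)))$.

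The two lower bounds come from base change. I would first note that the criterion of Proposition 3.2 holds verbatim for $X$, since its proof only uses flat base change of cohomology, $H^{q}(X,\mathcal{G})\otimes_k E\simeq H^{q}(X\otimes_k E,\mathcal{G}\otimes_k E)$, together with the stability of very ampleness under field extension, both valid for the very ample $\mathcal{L}$ here. Thus any Ulrich bundle $\mathcal{F}$ for $(X,\mathcal{L}^{\otimes d})$ yields, after $\otimes_k E$, an Ulrich bundle of the same rank for $(\mathbb{F}_E,\mathcal{O}_{\mathbb{F}_E}(rd))$, so $\mathrm{uc}((\mathbb{F}_E,\mathcal{O}_{\mathbb{F}_E}(rd)))\leq \mathrm{uc}((X,\mathcal{L}^{\otimes d}))$; applying the same principle to the extension $E\subset\bar{k}$ turns a minimal-rank Ulrich bundle on $(\mathbb{F}_E,\mathcal{O}_{\mathbb{F}_E}(rd))$ into one of equal rank on $(\mathbb{F}_{\bar{k}},\mathcal{O}_{\mathbb{F}_{\bar{k}}}(rd))$, giving $\mathrm{uc}((\mathbb{F}_{\bar{k}},\mathcal{O}_{\mathbb{F}_{\bar{k}}}(rd)))\leq \mathrm{uc}((\mathbb{F}_E,\mathcal{O}_{\mathbb{F}_E}(rd)))$. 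Concatenating the three estimates proves the chain.

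The main obstacle is not the cohomological bookkeeping, which becomes routine once Proposition 3.2 is transported to the twisted-flag setting, but rather the existence of an Ulrich bundle on the split flag: the inequalities are only meaningful where $\mathrm{uc}((\mathbb{F}_E,\mathcal{O}_{\mathbb{F}_E}(rd)))$ is finite, which is precisely why the theorem is phrased for the particular list of Grassmannians and two-step flags coming from the available existence results. A secondary point to verify is that $\pi_{*}$ really multiplies the rank by $[E:k]=\mathrm{ind}(A)$ and that $\pi^{*}\mathcal{L}^{\otimes d}$ is the intended polarization; both follow from $\pi$ being finite flat as the base change of $\mathrm{Spec}\,E\to\mathrm{Spec}\,k$.
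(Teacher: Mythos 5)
Your proposal is correct and follows essentially the same route as the paper, which simply invokes Proposition 5.10 for the polarization, the existence results of \cite{NOVA}, \cite{CMRV}, \cite{COS} for Ulrich bundles on the split flags, and then repeats the argument of Theorem 5.1 (pushforward along the degree-$\mathrm{ind}(A)$ projection via Lemma 3.1 for the upper bound, base change as in Proposition 3.2 for the lower bounds). Your write-up is in fact more explicit than the paper's own proof about why $[E:k]=\mathrm{ind}(A)$ and why the base-change criterion transports to the twisted-flag setting.
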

\begin{proof}
See \cite{NOVA} for the exsistence of Ulrich bundles on $X$ and references therein for the existence of Ulrich bundles on the flags $(\mathbb{F}_E,\mathcal{O}_{\mathbb{F}_E}(rd))$. The rest of the proof follows exactly the lines of the proof of Theorem 5.1 and uses Proposition 5.10.	
\end{proof}
Notice that $\mathrm{rdim}(X)+1\leq \mathrm{ind}(A)$ for $X=\mathrm{BS}(m,A)$ with equality if for instance $\mathrm{ind}(A)\leq 3$ (see \cite{NV2}, Proposition 6.14 and Theorem 6.15). 
\begin{cor}
Let $A$ be a central simple algebra over a field $k$ of characteristic zero and $X=\mathrm{BS}(m,A)$. Furthermore, let $E$ be a minimal splitting field of $A$ and assume $\mathrm{rdim}(X)+1=\mathrm{ind}(A)$. Then there is a very ample line bundles $\mathcal{L}$ on $X$, satisfying $\mathcal{L}\otimes_k E\simeq \mathcal{O}_{\mathbb{F}_E}(r)$ for suitable $r>0$, such that for any $d>0$ 
\begin{eqnarray*}
	\mathrm{uc}((\mathbb{F}_E,\mathcal{O}_{\mathbb{F}_E}(rd)))\leq \mathrm{uc}((X,\mathcal{L}^{\otimes d}))\leq (\mathrm{rdim}(X)+1)\cdot  \mathrm{uc}((\mathbb{F}_E,\mathcal{O}_{\mathbb{F}_E}(rd))).
\end{eqnarray*}	
\end{cor}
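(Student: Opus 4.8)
The plan is to obtain this as a direct specialization of Theorem 5.11. First I would observe that $X=\mathrm{BS}(m,A)$ is precisely the first entry in the list of twisted flag varieties to which Theorem 5.11 applies, so that theorem can be invoked verbatim: it furnishes a very ample line bundle $\mathcal{L}$ on $X$ with $\mathcal{L}\otimes_k E\simeq \mathcal{O}_{\mathbb{F}_E}(r)$ for a suitable $r>0$ and, for every $d>0$, the chain of inequalities
\[
	\mathrm{uc}((\mathbb{F}_{\bar{k}},\mathcal{O}_{\mathbb{F}_{\bar{k}}}(rd)))\leq \mathrm{uc}((\mathbb{F}_E,\mathcal{O}_{\mathbb{F}_E}(rd)))\leq \mathrm{uc}((X,\mathcal{L}^{\otimes d}))\leq \mathrm{ind}(A)\cdot \mathrm{uc}((\mathbb{F}_E,\mathcal{O}_{\mathbb{F}_E}(rd))).
\]
Here I would note that, since $\mathrm{char}(k)=0$, every splitting field is automatically separable, so the ``minimal splitting field'' $E$ of the corollary coincides with the ``minimal separable splitting field'' required by Theorem 5.11; thus the hypotheses match without any adjustment.

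Next I would discard the leftmost term of the chain, retaining only
\[
	\mathrm{uc}((\mathbb{F}_E,\mathcal{O}_{\mathbb{F}_E}(rd)))\leq \mathrm{uc}((X,\mathcal{L}^{\otimes d}))\leq \mathrm{ind}(A)\cdot \mathrm{uc}((\mathbb{F}_E,\mathcal{O}_{\mathbb{F}_E}(rd))),
\]
which is the content actually needed. Finally I would invoke the standing assumption $\mathrm{rdim}(X)+1=\mathrm{ind}(A)$ and substitute it into the upper bound, replacing the factor $\mathrm{ind}(A)$ by $\mathrm{rdim}(X)+1$; this yields exactly the asserted inequalities. Since neither the lower bound nor the line bundle $\mathcal{L}$ is affected by this substitution, the statement follows.

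There is essentially no obstacle here: all the geometric content---the existence of an Ulrich bundle of the relevant rank on the split flag $(\mathbb{F}_E,\mathcal{O}_{\mathbb{F}_E}(rd))$, its pushforward along the finite base-change morphism via Lemma 3.1, and the descent of minimal-rank Ulrich bundles via Proposition 3.2---is already packaged inside Theorem 5.11 and Proposition 5.10. The only genuinely new ingredient is the external hypothesis $\mathrm{rdim}(X)+1=\mathrm{ind}(A)$, which holds unconditionally when $\mathrm{ind}(A)\leq 3$ by \cite{NV2}; the corollary is then merely the bookkeeping step of rewriting the index bound of Theorem 5.11 in terms of the categorical representability dimension. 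If I wanted the argument to be self-contained I would, at most, reproduce the one-line computation from the proof of Theorem 5.1, namely that the index factor arises as $\mathrm{rk}(\pi_*\mathcal{E})=\mathrm{ind}(A)\cdot \mathrm{rk}(\mathcal{E})$ for the finite morphism $\pi\colon X\otimes_k E\to X$.
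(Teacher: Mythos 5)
Your proposal is correct and matches the paper's (implicit) argument exactly: the corollary is stated as an immediate consequence of Theorem 5.11, obtained by specializing to $X=\mathrm{BS}(m,A)$ and substituting the hypothesis $\mathrm{rdim}(X)+1=\mathrm{ind}(A)$ into the upper bound. Your additional remark that separability of $E$ is automatic in characteristic zero is a sensible clarification but introduces nothing beyond the paper's route.
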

Let $Y$ be one of the following flags: symplectic Grassmannians $\mathrm{IGrass}_{k^{sep}}(2,2n)$ for $n\geq 2$, odd and even-dimensional quadrics, the orthogonal Grassmannians $\mathrm{OGras}_{k^{sep}}(2,m)$ for $m\geq 4$, $\mathrm{OGrass}_{k^{sep}}(3,4q+6)$ for $q\geq 0$ and $\mathrm{OGrass}_{k^{sep}}(4,8)$. Then $\mathrm{Pic}(Y)\simeq \mathbb{Z}$ is generated by a very ample line bundle (see \cite{FO}, Section 2). We will denote this very ample generator by $\mathcal{O}_Y(1)$. For details on inner twisted forms of $Y$, we again refer to \cite{MPW111}, Section 5. Now let $X$ be an inner twisted form of $Y$. Notice that for any $X$ there is a finite (separable) field extension $E$ of $k$ (called splitting field) such that $X\otimes_k E$ is isomorphic to the corresponding flag. The minimal degree of such an splitting field is also denoted by $\mathrm{ind}(X)$.

\begin{thm}[Type $B_n,C_n$ and $D_n$]
	Let $X$ be an inner twisted form of $Y$ and $E$ a minimal splitting field. Then there is a very ample line bundles $\mathcal{L}$ on $X$, satisfying $\mathcal{L}\otimes_k E\simeq \mathcal{O}_{Y}(r)$ for suitable $r>0$, such that for any $d>0$ 
	\begin{eqnarray*}
		\mathrm{uc}((Y,\mathcal{O}_{Y}(rd)))\leq \mathrm{uc}((X,\mathcal{L}^{\otimes d}))\leq \mathrm{ind}(X)\cdot  \mathrm{uc}((Y,\mathcal{O}_{Y}(rd))).
	\end{eqnarray*}
%	If every prime $\leq \mathrm{dim}(Y)$ divides $rd$, one has 
%	\begin{eqnarray*}
%		\mathrm{dim}(Y)!\leq \mathrm{uc}((X,\mathcal{L}^{\otimes d}))\leq \mathrm{ind}(X)\cdot s\cdot \mathrm{dim}(Y)!
%	\end{eqnarray*}
\end{thm}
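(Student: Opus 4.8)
The plan is to mimic exactly the structure of the proof of Theorem 5.1, which is the prototype for all these ``\textrm{uc} sandwich'' statements. The key geometric input is that after base change to a minimal splitting field $E$, the twisted form $X$ becomes isomorphic to the flag $Y$, and the polarization $\mathcal{L}$ pulls back to $\mathcal{O}_Y(r)$; this is precisely what Proposition 5.10 furnishes, once one knows that $\mathrm{Pic}(Y)\simeq\mathbb{Z}$ is generated by a very ample class (which is stated to hold for all the listed flags of type $B_n,C_n,D_n$ by \cite{FO}, Section 2). So first I would invoke Proposition 5.10 to produce the very ample $\mathcal{L}$ on $X$ with $\mathcal{L}\otimes_k E\simeq\mathcal{O}_Y(r)$, and observe that $(X,\mathcal{L}^{\otimes d})\otimes_k E\simeq (Y,\mathcal{O}_Y(rd))$.

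The upper bound is obtained via descent of an Ulrich bundle along the finite surjective projection. Let $\pi\colon X\otimes_k E\rightarrow X$ be the base-change morphism, which is finite of degree $[E:k]=\mathrm{ind}(X)$. Take an Ulrich bundle $\mathcal{E}$ of minimal rank $\mathrm{uc}((Y,\mathcal{O}_Y(rd)))$ on $Y\simeq X\otimes_k E$; the existence of \emph{some} Ulrich bundle on these flags is what must be cited from the literature (see the references gathered in \cite{NOVA} for the type $A$ case, and the analogous references for symplectic/orthogonal Grassmannians and quadrics). Then Lemma 3.1 shows $\pi_*\mathcal{E}$ is an Ulrich bundle for $(X,\mathcal{L}^{\otimes d})$, and since pushing forward along a degree $\mathrm{ind}(X)$ map multiplies the rank by $\mathrm{ind}(X)$, we get $\mathrm{uc}((X,\mathcal{L}^{\otimes d}))\leq \mathrm{ind}(X)\cdot\mathrm{uc}((Y,\mathcal{O}_Y(rd)))$.

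For the lower bound I would argue, exactly as in Theorem 5.1, that any Ulrich bundle $\mathcal{F}$ on $(X,\mathcal{L}^{\otimes d})$ base-changes to an Ulrich bundle $\mathcal{F}\otimes_k E$ on $(Y,\mathcal{O}_Y(rd))$ of the same rank. The compatibility of the Ulrich condition with base change is the flag-variety analogue of Proposition 3.2; it rests on the flat base-change isomorphism $H^i(X,\mathcal{G})\otimes_k E\simeq H^i(X\otimes_k E,\mathcal{G}\otimes_k E)$ together with the fact that $\mathcal{L}$ is very ample iff $\mathcal{L}\otimes_k E$ is. Hence no Ulrich bundle on $X$ can have rank below $\mathrm{uc}((Y,\mathcal{O}_Y(rd)))$, giving the left inequality.

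The main obstacle is not the inequality chain itself, which is formal, but rather securing the existence of Ulrich bundles on the geometrically split flags $(Y,\mathcal{O}_Y(rd))$ in types $B_n,C_n,D_n$ — for the type $A$ Grassmannians and flags these are known (\cite{CMRV}, \cite{COS}), but for the symplectic and orthogonal Grassmannians and the higher quadrics one needs the corresponding constructions in the literature, and the finiteness of $\mathrm{uc}$ is exactly what licenses every term in the displayed chain. I would therefore stress that the proof reduces entirely to the split case and then cite the relevant existence results for each flag on the list, noting that the remainder of the argument follows verbatim the proof of Theorem 5.1 using Lemma 3.1, the base-change compatibility of Ulrich bundles, and Proposition 5.10.
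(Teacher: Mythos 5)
Your proposal is correct and follows essentially the same route as the paper, whose proof of this theorem is simply the remark that it is analogous to Theorems 5.11 and 5.1 (i.e.\ Proposition 5.10 for the very ample $\mathcal{L}$, Lemma 3.1 for the pushforward upper bound, base-change compatibility for the lower bound, and citations to the literature for existence of Ulrich bundles on the split flags). Your write-up in fact makes explicit the steps the paper leaves implicit, including the correct identification of the degree of $\pi$ with $\mathrm{ind}(X)$.
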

\begin{proof}
	Analogous to the proof of Theorem 5.11.
\end{proof}
%\begin{prop}
%Let $X=\mathrm{BS}(m,A)$ be a generalized Brauer--Severi variety over $k$. Then
%	\begin{eqnarray*}
%	\mathrm{uc}(\mathrm{Grass}(mn,n^2))\leq \mathrm{uc}(X)\leq \mathrm{ind}(A)\cdot s\cdot \mathrm{uc}(\mathrm{Grass}(mn,n^2))
%\end{eqnarray*}

%\end{prop}
\begin{rema}
\textnormal{The lower and upper bound from Theorems 5.11 and 5.13 can be made quite explicit, using the results from \cite{CMRV} and \cite{COS}. }	
	\end{rema}
In the case of certain twisted quadrics we have also an analogue of Theorem 5.1, formulated in Theorem 5.14 below. Let us first recall some basic facts concerning twisted quadrics. {Let $G=\mathrm{PSO}_n$ with $n$ even. In this case $\widetilde{G}= \mathrm{Spin}_n$. Consider the action of $G$ on $\mathbb{P}^{n-1}$ given by projective linear transformations. We write $P\subset G$ for the stabilizer of the point $[1:0:\cdots :0]$. The projective homogeneous variety $G/P$ is a smooth quadric hypersurface $Q\subset \mathbb{P}^{n-1}$. Given a 1-cocycle $\gamma\colon \mathrm{Gal}(k^s|k)\rightarrow \mathrm{PSO}_n(k^s)$, it is well known that $\gamma$ determines a central simple $k$-algebra $A$ with an involution $\sigma$ of orthogonal type. The associated twisted homogeneous space ${_\gamma}(G/P)$ is a twisted form of the quadric $G/P$.
	
To a central simple algebra $A$ of degree $n$ with involution $\sigma$ of the first kind over a field $k$ of $\mathrm{char}(k)\neq 2$ one can associate the \emph{involution variety} $\mathrm{IV}(A,\sigma)$. This variety can be described as the variety of $n$-dimensional right ideals $I$ of $A$ such that $\sigma(I)\cdot I=0$. If $A$ is split so $(A,\sigma)\simeq (M_n(k), q^*)$, where $q^*$ is the adjoint involution defined by a quadratic form $q$ one has $\mathrm{IV}(A,\sigma)\simeq V(q)\subset \mathbb{P}^{n-1}_k$. Here $V(q)$ is the quadric determined by $q$. By construction, for a suitable $N$, such an involution variety  $\mathrm{IV}(A,\sigma)$ becomes a quadric in $\mathbb{P}^{N-1}_L$ after base change to some splitting field $L$ of $A$. In this way the involution variety is a twisted form of a smooth quadric as described before. The case of involution surfaces is treated in section 7. But let us first settle the existence problem for Ulrich bundles on involution varieties. % Recall from \cite{DTS} that a splitting field $L$ splits $A$ \emph{isotropically} if $(A,\sigma)\otimes_k L\simeq (M_n(L), q^*)$ with $q$ an isotropic quadratic form over $L$. Although the degree of $A$ is arbitrary, (when $\mathrm{char}(k)\neq 2$), the case where degree of $A$ is odd does not give anything new, since central simple algebras of odd degree with involution of the first kind are split (see \cite{KNUS}, Corollary 2.8). For details on the construction and further properties on involution varieties and the corresponding algebras we refer to \cite{DTS}
\begin{prop}
	Let $X$ be an involution variety as above over a field $k$. Then $X$ carries an Ulrich bundle for any polarization.
\end{prop}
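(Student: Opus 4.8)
The plan is to mimic the descent argument used for Theorem 5.1: reduce the existence of an Ulrich bundle on the (possibly non-split) involution variety to the split quadric by passing to a splitting field and pushing forward along the resulting finite morphism. Let $E$ be a minimal separable splitting field of $A$, so that $X_E:=X\otimes_k E$ is isomorphic to a smooth quadric $Q\subset\mathbb{P}^{N-1}_E$, and let $\pi\colon X_E\to X$ denote the base change morphism, which is finite and surjective. Given any polarization $\mathcal{L}$ on $X$, the line bundle $\pi^*\mathcal{L}\simeq\mathcal{L}\otimes_k E$ is again very ample (as in the proof of Proposition 3.2, using \cite{LIV}, Lemma 3.2). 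By Lemma 3.1 it therefore suffices to exhibit an Ulrich bundle $\mathcal{E}$ for $(X_E,\pi^*\mathcal{L})$; then $\pi_*\mathcal{E}$ will be an Ulrich bundle for $(X,\mathcal{L})$, which is exactly what we want.

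First I would settle the split case, i.e.\ the existence of an Ulrich bundle on the smooth quadric $Q$ for the very ample polarization $\pi^*\mathcal{L}$. For the fundamental polarization $\mathcal{O}_Q(1)$ one uses an appropriately normalized spinor bundle, which is classically known to be Ulrich on a smooth quadric over a field of characteristic $\neq 2$. For quadrics of dimension $\geq 3$ the Picard group is $\mathbb{Z}\cdot\mathcal{O}_Q(1)$, so $\pi^*\mathcal{L}\simeq\mathcal{O}_Q(r)$ for some $r>0$, and the existence of an Ulrich bundle for this re-embedding then follows directly from Proposition 3.3 applied to $(Q,\mathcal{O}_Q(1))$. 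The low-dimensional quadrics (the conic and the quadric surface) are either projective spaces or Segre varieties, for which Ulrich bundles for every polarization are already available (\cite{CMRPV}); these are the cases studied more closely in Section 7.

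Assembling the two steps gives the proposition: for an arbitrary polarization $\mathcal{L}$ on $X$, choose the Ulrich bundle $\mathcal{E}$ on $(X_E,\pi^*\mathcal{L})$ produced in the split case and take $\pi_*\mathcal{E}$, which is Ulrich for $(X,\mathcal{L})$ by Lemma 3.1. I expect the only genuinely geometric input, and hence the main obstacle, to be the base case on the split quadric, namely the fact that spinor bundles are Ulrich for $\mathcal{O}_Q(1)$; everything else is the formal base-change and push-forward formalism already set up in Section 3. A secondary point that needs a word of care is the identification of $\pi^*\mathcal{L}$ with a power of $\mathcal{O}_Q(1)$: for $\dim X\geq 3$ this is automatic from $\mathrm{Pic}(Q)\simeq\mathbb{Z}$, whereas for an involution surface one must observe that a Galois-invariant polarization becomes symmetric after base change and is therefore still handled by the split-quadric case.
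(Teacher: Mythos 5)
Your proposal is correct and follows essentially the same route as the paper: base change to a splitting field $E$ so that $X_E$ becomes a smooth quadric, invoke the classical existence of Ulrich (spinor) bundles for $(Q,\mathcal{O}_Q(1))$ together with Proposition 3.3 to handle the re-embedding by $\pi^*\mathcal{L}$, and push forward along the finite morphism $\pi\colon X_E\to X$ using Lemma 3.1. Your extra care about identifying $\pi^*\mathcal{L}$ with a power of $\mathcal{O}_Q(1)$ (and the surface case) is a point the paper glosses over, but it does not change the argument.
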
	
\begin{proof}
	For a smooth quadric $Q\subset \mathbb{P}^{n-1}$ embedded by $\mathcal{O}_Q(1)$, it is well known that there are Ulrich bundles for $(Q,\mathcal{O}_Q(1)))$. Furthermore, it is known that $\mathrm{uc}((Q,\mathcal{O}_Q(1))=2^{\lfloor (n-3)/2\rfloor}$ (see \cite{BV}). With Proposition 3.3, we obtain an Ulrich bundle on $(Q,\mathcal{O}_Q(d))$. Moreover, one has $\mathrm{uc}((Q,\mathcal{O}_Q(d))\leq (n-2)!2^{\lfloor (n-3)/2\rfloor}$. Now let $X$ be embedded into a projective space by $\mathcal{L}^{\otimes d}$, where $\mathcal{L}$ is an ample line bundle. After base change to some finite field extension $k\subset E$, the involution variety $X$ becomes a quadric. Hence there there is an Ulrich bundle $\mathcal{F}$ for $(X\otimes_k E, \mathcal{O}_{X\otimes_k E}(r))$. Let $\pi:X\otimes_k E\rightarrow X$ be the projection, then $\pi_*\mathcal{F}$ is an Ulrich bundle for $(X,\mathcal{L}^{\otimes d})$. 
\end{proof}
\begin{thm}
	Let $X$ be an ivolution variety as above. Let $E$ be a minimal splitting field of $A$ and denote the base change $X\otimes_k E$ by $Q$. Then there is a very ample line bundles $\mathcal{L}$ on $X$, satisfying $\mathcal{L}\otimes_k E\simeq \mathcal{O}_{Q}(r)$ for suitable $r>0$, such that for any $d>0$ 
\begin{eqnarray*}
\mathrm{uc}((Q,\mathcal{O}_{Q}(rd)))\leq \mathrm{uc}((X,\mathcal{L}^{\otimes d}))\leq \mathrm{ind}(A)\cdot \mathrm{uc}((Q,\mathcal{O}_{Q}(rd))).
\end{eqnarray*}		
\end{thm}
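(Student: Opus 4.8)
The plan is to run the proof of Theorem 5.1 essentially verbatim, replacing the Veronese embedding of $\mathbb{P}^n_E$ by the quadric embedding of $Q$ and using Proposition 5.10 to descend the polarization. First I would produce $\mathcal{L}$. An involution variety is smooth, projective and geometrically integral, and $Q=X\otimes_k E$ sits in $\mathbb{P}^{N-1}_E$ as a quadric hypersurface via its hyperplane class $\mathcal{O}_Q(1)=i^{*}\mathcal{O}_{\mathbb{P}^{N-1}}(1)$. Hence Proposition 5.10 applies and yields a very ample line bundle $\mathcal{L}$ on $X$ with $\mathcal{L}\otimes_k E\simeq\mathcal{O}_Q(r)$ for a suitable $r>0$. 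Writing $\pi\colon Q=X\otimes_k E\to X$ for the projection, $\pi$ is finite and flat, $\pi^{*}(\mathcal{L}^{\otimes d})\simeq\mathcal{O}_Q(rd)$, and since $E$ is a minimal splitting field we have $[E:k]=\mathrm{ind}(A)$; therefore $\pi$ has degree $\mathrm{ind}(A)$ and $\mathrm{rk}(\pi_*\mathcal{G})=\mathrm{ind}(A)\cdot\mathrm{rk}(\mathcal{G})$ for any vector bundle $\mathcal{G}$ on $Q$.

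For the upper bound I would choose an Ulrich bundle $\mathcal{E}$ of minimal rank $\mathrm{uc}((Q,\mathcal{O}_Q(rd)))$ on the split quadric $Q$, whose existence is guaranteed by the constructions recalled in the proof of Proposition 5.15. Because $\pi^{*}(\mathcal{L}^{\otimes d})\simeq\mathcal{O}_Q(rd)$, Lemma 3.1 shows that $\pi_*\mathcal{E}$ is an Ulrich bundle for $(X,\mathcal{L}^{\otimes d})$, and its rank is $\mathrm{ind}(A)\cdot\mathrm{uc}((Q,\mathcal{O}_Q(rd)))$, giving the right-hand inequality. For the left-hand inequality I would show that an Ulrich bundle $\mathcal{F}$ on $(X,\mathcal{L}^{\otimes d})$ cannot have rank below $\mathrm{uc}((Q,\mathcal{O}_Q(rd)))$: by the same flat base-change computation as in Proposition 3.2 — using $H^{q}(X,\mathcal{H})\otimes_k E\simeq H^{q}(Q,\mathcal{H}\otimes_k E)$, the equality $\dim X=\dim Q$, and the fact that $\mathcal{L}^{\otimes d}$ is very ample if and only if $\mathcal{O}_Q(rd)$ is — the bundle $\mathcal{F}\otimes_k E$ is Ulrich on $(Q,\mathcal{O}_Q(rd))$ of the same rank $\mathrm{rk}(\mathcal{F})$, so $\mathrm{rk}(\mathcal{F})\geq\mathrm{uc}((Q,\mathcal{O}_Q(rd)))$. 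Minimizing over $\mathcal{F}$ yields the left inequality, and the two bounds together prove the theorem.

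Each step is routine once the hypotheses are in place; the real work is bookkeeping. The point I expect to need the most care is the identification $[E:k]=\mathrm{ind}(A)$ for a minimal splitting field, since this is exactly what pins the multiplicative factor in the upper bound to $\mathrm{ind}(A)$ rather than to some larger splitting degree, together with the verification that $\mathcal{O}_Q(1)$ really is the hyperplane class of $X\otimes_k E$ so that Proposition 5.10 is applicable. Transferring the Ulrich property across $\pi$ in both directions — Lemma 3.1 downstairs and flat base change of cohomology upstairs — is the technical heart, but both directions are already available and the ranges of the Ulrich vanishing conditions match because $\dim X=\dim Q$.
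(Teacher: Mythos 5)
Your proof is correct and follows exactly the route the paper intends: the paper actually states this theorem without writing out a proof, relying (as for Theorems 5.11 and 5.13) on the template of Theorem 5.1 — Proposition 5.10 to descend the polarization, Lemma 3.1 applied to the finite degree-$\mathrm{ind}(A)$ projection $\pi\colon Q\to X$ for the upper bound, and flat base change of cohomology (as in Proposition 3.2) for the lower bound. Your bookkeeping, including the identification $[E:k]=\mathrm{ind}(A)$ for a minimal separable splitting field and the matching of the vanishing ranges via $\dim X=\dim Q$, reproduces that argument faithfully.
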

\begin{rema}
\textnormal{If $(A,\sigma)$ is a central simple $\mathbb{R}$-algebra with involution of orthogonal type, $\mathrm{ind}(A)=2$. This is because the minimal separable splitting field is $\mathbb{C}$. We therefore have
\begin{eqnarray*}
	\mathrm{uc}((Q,\mathcal{O}_{Q}(rd)))\leq \mathrm{uc}((X,\mathcal{L}^{\otimes d}))\leq 2\cdot \mathrm{uc}((Q,\mathcal{O}_{Q}(rd))).
\end{eqnarray*}
}	
\end{rema}
\noindent
Let $X$ be one of the following involution varieties:
\begin{itemize}
	\item[(\textbf{i})] a twisted quadric associated to a central simple algebra $(A,\sigma)$ with involution of orthogonal type having trivial discriminant $\delta(A,\sigma)$, 
	\item[(\textbf{ii})] a twisted quadric associated to a central simple $\mathbb{R}$-algebra $(A,\sigma)$ with involution of orthogonal type.
\end{itemize}
Then one has $\mathrm{rdim}(X)+1\leq \mathrm{ind}(A)$ with equality for instance if $\mathrm{ind}(A)\leq 3$ (see \cite{NV2}, Theorem 6.16). This implies:
\begin{cor}
Let $X$ be one of the varieties (i) or (ii) and assume $\mathrm{rdim}(X)+1=\mathrm{ind}(A)$. Then there is a very ample line bundles $\mathcal{L}$ on $X$, satisfying $\mathcal{L}\otimes_k E\simeq \mathcal{O}_{Q}(r)$ for suitable $r>0$, such that for any $d>0$ 
\begin{eqnarray*}
	\mathrm{uc}((Q,\mathcal{O}_{Q}(rd)))\leq \mathrm{uc}((X,\mathcal{L}^{\otimes d}))\leq (\mathrm{rdim}(X)+1)\cdot \mathrm{uc}((Q,\mathcal{O}_{Q}(rd))).
\end{eqnarray*}		
\end{cor}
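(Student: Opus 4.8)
The plan is to deduce this directly from Theorem 5.14, since the only new ingredient is the numerical hypothesis $\mathrm{rdim}(X)+1=\mathrm{ind}(A)$. First I would invoke Theorem 5.14 for the involution variety $X$: this produces a very ample line bundle $\mathcal{L}$ on $X$ with $\mathcal{L}\otimes_k E\simeq\mathcal{O}_Q(r)$ for a suitable $r>0$, together with the chain of inequalities
\begin{eqnarray*}
\mathrm{uc}((Q,\mathcal{O}_Q(rd)))\leq \mathrm{uc}((X,\mathcal{L}^{\otimes d}))\leq \mathrm{ind}(A)\cdot \mathrm{uc}((Q,\mathcal{O}_Q(rd)))
\end{eqnarray*}
for every $d>0$. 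Both the line bundle $\mathcal{L}$ and the integer $r$ are exactly the ones supplied by that theorem, so nothing needs to be reconstructed; the statement of the corollary reuses them verbatim.

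The second step is purely numerical. Under the standing hypothesis $\mathrm{rdim}(X)+1=\mathrm{ind}(A)$ I would substitute $\mathrm{ind}(A)=\mathrm{rdim}(X)+1$ into the upper bound above, which immediately yields
\begin{eqnarray*}
\mathrm{uc}((Q,\mathcal{O}_Q(rd)))\leq \mathrm{uc}((X,\mathcal{L}^{\otimes d}))\leq (\mathrm{rdim}(X)+1)\cdot \mathrm{uc}((Q,\mathcal{O}_Q(rd))),
\end{eqnarray*}
as desired. The role of restricting to the classes (\textbf{i}) and (\textbf{ii}) is to guarantee, via \cite{NV2}, Theorem 6.16, that the inequality $\mathrm{rdim}(X)+1\leq\mathrm{ind}(A)$ holds at all and in fact becomes an equality in the relevant situations (for instance whenever $\mathrm{ind}(A)\leq 3$); this is precisely what makes the hypothesis natural and the substitution meaningful rather than vacuous.

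Since everything reduces to Theorem 5.14 followed by a single substitution, there is essentially no obstacle to overcome. The only point deserving a word of care is to confirm that the hypothesis $\mathrm{rdim}(X)+1=\mathrm{ind}(A)$ is genuinely attainable for the two families (\textbf{i}) and (\textbf{ii}), i.e.\ that the general bound $\mathrm{rdim}(X)+1\leq\mathrm{ind}(A)$ from \cite{NV2}, Theorem 6.16 does not preclude equality; this is exactly what the cited result ensures. Hence the proof is a one-line consequence of the preceding theorem.
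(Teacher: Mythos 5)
Your proposal is correct and follows essentially the same route as the paper: the paper offers no separate argument for this corollary, obtaining it immediately from the preceding theorem on involution varieties by substituting $\mathrm{ind}(A)=\mathrm{rdim}(X)+1$ into the upper bound, with the restriction to the classes (i) and (ii) serving only to make that hypothesis meaningful via the bound $\mathrm{rdim}(X)+1\leq\mathrm{ind}(A)$ from \cite{NV2}, Theorem 6.16. Nothing further is needed.
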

\begin{exam}
	\textnormal{For a smooth quadric $Q\subset \mathbb{P}^{n-1}$ embedded by $\mathcal{O}_Q(1)$, it is well known that $\mathrm{uc}((Q,\mathcal{O}_Q(1))=2^{\lfloor (n-3)/2\rfloor}$ (see \cite{BV}). From Proposition 3.3, we conclude that $\mathrm{uc}((Q,\mathcal{O}_Q(d))\leq (n-2)!2^{\lfloor (n-3)/2\rfloor}$. Let $X$ be a twisted quadric as in Corollary 5.17. Then $\mathrm{ind}(A)\geq 2$. Corollary 5.17 translates to
	$$
2^{\lfloor (\mathrm{dim}(X)-1)/2\rfloor}\leq \mathrm{uc}((X,\mathcal{L}^{\otimes d}))\leq \mathrm{ind}(A)\cdot \mathrm{dim}(X)\cdot 2^{\lfloor (\mathrm{dim}(X)-1)/2\rfloor}. 
$$
We want to mention that one can determine the minimal $r$ such that $\mathcal{L}\otimes_k E\simeq \mathcal{O}_Q(r)$, using the results in \cite{DT}. Moreover, for any scheme $X$, there is the notion of $\mathrm{ind}(X)$ which is defined as the greatest common divisor of the degrees of all finite field extensions $L$ with $X(L)\neq \emptyset$. So in general it may be possible to find a smaller upper bound using $\mathrm{ind}(X)$ instead of $\mathrm{ind}(A)$. For details in the case of involution varieties we refer to \cite{KRASH}.}
\end{exam}
\section{A relation between Ulrich complexity and $\mathrm{rdim}$}
In this section we make some elementary observations regarding the relation between categorical representability dimension and Ulrich complexity in the case of Brauer--Severi varieties. In fact, with the results of this section, we start to move towards an answer to questions 3) and 4).\\

\noindent
Let $X$ be a $n$-dimensional Brauer--Severi variety over a field $k$ corresponding to a central simple algebra $A$. Denote by $p$ the period of $X$ and let $\mathcal{E}$ be an Ulrich bundle for $(X,\mathcal{O}_X(pd))$, where $d\geq 1$ is a fixed integer. Then $\bar{\mathcal{E}}$ is an Ulrich bundle for $(\mathbb{P}^n,\mathcal{O}_{\mathbb{P}^n}(pd))$. Now for an Ulrich bundle on $\mathbb{P}^n$, the Euler-characteristic is given by 
$$
\chi(\bar{\mathcal{E}}(l))=\frac{\mathrm{rk}(\mathcal{E})}{n!}(l+pd)\cdot(l+2pd)\cdots (l+npd) \quad \forall l\in \mathbb{Z}.
$$
In particular, for $l=0$, we find
$$
\chi(\bar{\mathcal{E}})=\mathrm{rk}(\mathcal{E})\cdot (pd)^n
$$
and hence
$$
\frac{\chi(\bar{\mathcal{E}})}{d^n}=\mathrm{rk}(\mathcal{E})\cdot p^n.
$$
Throughout this section we use $\bar{\mathcal{E}}$ instead $\mathcal{E}$. But we could use $\mathcal{E}$ or any base change $\mathcal{E}\otimes_k E$. The aim is to relate the Ulrich complexity on $X$ to the rank of the corresponding vector bundle on $\mathbb{P}$. Since the period divides the index, we obtain
\begin{thm}
Let $X$ be as above and let $pd=\mathrm{ind}(X)\cdot t$. Denote by $\mathcal{E}$ an Ulrich bundle on $\phi_{pd}(X)$. Then
$$
\frac{\chi(\bar{\mathcal{E}})}{t^n}\geq \mathrm{uc}(\phi_{pd}(X))\cdot (\mathrm{rdim}(X)+1)^n.
$$
In particular, if $t=1$, then
$$
\chi(\bar{\mathcal{E}})\geq\mathrm{uc}(\phi_{pd}(X))\cdot (\mathrm{rdim}(X)+1)^n.
$$
\end{thm}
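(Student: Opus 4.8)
The plan is to reduce the claimed inequality to two elementary facts that are already available in the excerpt: the explicit value of $\chi(\bar{\mathcal{E}})$ recorded immediately before the statement, and the bound $\mathrm{rdim}(X)+1\leq\mathrm{ind}(X)$ valid for every Brauer--Severi variety (as noted in Section 5, following \cite{NV3}).

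First I would substitute the relation $pd=\mathrm{ind}(X)\cdot t$ into the identity $\chi(\bar{\mathcal{E}})=\mathrm{rk}(\mathcal{E})\cdot(pd)^n$ computed just above. This gives
$$\chi(\bar{\mathcal{E}})=\mathrm{rk}(\mathcal{E})\cdot\mathrm{ind}(X)^n\cdot t^n,$$
and dividing by $t^n$ (which is harmless since $t\geq 1$ is a positive integer by construction) yields the clean identity
$$\frac{\chi(\bar{\mathcal{E}})}{t^n}=\mathrm{rk}(\mathcal{E})\cdot\mathrm{ind}(X)^n.$$
Thus the normalized Euler characteristic literally computes the rank of $\mathcal{E}$ times $\mathrm{ind}(X)^n$, and the whole theorem becomes a matter of bounding the two factors on the right from below.

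Next I would invoke two inequalities and multiply them. Since $\mathcal{E}$ is an Ulrich bundle on $\phi_{pd}(X)$ and $\mathrm{uc}(\phi_{pd}(X))$ is by definition the minimal rank of such a bundle, we have $\mathrm{rk}(\mathcal{E})\geq\mathrm{uc}(\phi_{pd}(X))$. On the other hand the cited bound gives $\mathrm{ind}(X)\geq\mathrm{rdim}(X)+1$, whence $\mathrm{ind}(X)^n\geq(\mathrm{rdim}(X)+1)^n$ by monotonicity of $x\mapsto x^n$ on the nonnegative integers. Multiplying these and combining with the identity above produces
$$\frac{\chi(\bar{\mathcal{E}})}{t^n}=\mathrm{rk}(\mathcal{E})\cdot\mathrm{ind}(X)^n\geq\mathrm{uc}(\phi_{pd}(X))\cdot(\mathrm{rdim}(X)+1)^n,$$
which is exactly the assertion; the ``in particular'' case follows by setting $t=1$.

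The argument is essentially a bookkeeping chain, so I expect no genuine obstacle. The only point requiring care is conceptual rather than technical: the relation $\mathrm{rdim}(X)+1\leq\mathrm{ind}(X)$ is an \emph{inequality}, not an equality, in general, so one should be aware that the bound is sharp precisely in the regime where period equals index, i.e.\ where $\mathrm{rdim}(X)+1=\mathrm{ind}(X)$. It is worth recording that the two lower bounds being multiplied are independent — one controls the Ulrich side via the definition of $\mathrm{uc}$, the other controls the arithmetic side via categorical representability — which is precisely what makes the resulting inequality a bridge between the two invariants.
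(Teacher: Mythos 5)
Your argument is correct and is essentially identical to the paper's own proof: substitute $pd=\mathrm{ind}(X)\cdot t$ into $\chi(\bar{\mathcal{E}})=\mathrm{rk}(\mathcal{E})\cdot(pd)^n$, then apply $\mathrm{rk}(\mathcal{E})\geq\mathrm{uc}(\phi_{pd}(X))$ and $\mathrm{rdim}(X)+1\leq\mathrm{ind}(X)$. If anything, you are slightly more explicit than the paper, which leaves the step $\mathrm{rk}(\mathcal{E})\geq\mathrm{uc}(\phi_{pd}(X))$ implicit.
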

\begin{proof}
We have 	
$$
\chi(\bar{\mathcal{E}})=\mathrm{rk}(\mathcal{E})\cdot (pd)^n
$$
and therefore
$$
\chi(\bar{\mathcal{E}})=\mathrm{rk}(\mathcal{E})\cdot (\mathrm{ind}(X)\cdot t)^n.
$$
Since $\mathrm{rdim}(X)+1\leq \mathrm{ind}(X)$ (see \cite{NV3}, Proposition 4.1 and Theorem 1.4), we conclude 
$$
\frac{\chi(\bar{\mathcal{E}})}{t^n}\geq \mathrm{uc}(\phi_{pd}(X))\cdot (\mathrm{rdim}(X)+1)^n.
$$
\end{proof}
If period eqals index, it is conjectured in \cite{NV2} that $\mathrm{rdim}(X)+1=\mathrm{ind}(X)$. So it is reasonable to state
\begin{cor}
	Let $X$ be as above and assume $\mathrm{rdim}(X)+1=\mathrm{ind}(X)=p$. Denote by $\mathcal{E}$ an Ulrich bundle on $\phi_{pd}(X)$. Then
	$$
	\frac{\chi(\bar{\mathcal{E}})}{d^n}=\mathrm{rk}(\mathcal{E})\cdot (\mathrm{rdim}(X)+1)^n.
	$$
	In particular, if $d=1$, then
	$$
	\chi(\bar{\mathcal{E}})=\mathrm{rk}(\mathcal{E})\cdot (\mathrm{rdim}(X)+1)^n.
	$$
\end{cor}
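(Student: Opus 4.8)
The plan is to derive the corollary directly from the identity established just before Theorem 6.1, namely
\begin{eqnarray*}
\frac{\chi(\bar{\mathcal{E}})}{d^n}=\mathrm{rk}(\mathcal{E})\cdot p^n,
\end{eqnarray*}
which holds for any Ulrich bundle $\mathcal{E}$ for $(X,\mathcal{O}_X(pd))$. This formula was obtained by base-changing $\mathcal{E}$ to $\bar{k}$, where $\bar{\mathcal{E}}$ becomes an Ulrich bundle for $(\mathbb{P}^n,\mathcal{O}_{\mathbb{P}^n}(pd))$, and evaluating the known Euler characteristic of an Ulrich bundle on projective space at $l=0$. Since that computation is already carried out in the excerpt, I may simply invoke it.

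First I would recall the hypothesis $\mathrm{rdim}(X)+1=\mathrm{ind}(X)=p$. The point is that under this assumption $p$ and $\mathrm{rdim}(X)+1$ are literally the same integer, so I may substitute one for the other in the displayed identity. Concretely, starting from $\chi(\bar{\mathcal{E}})/d^n=\mathrm{rk}(\mathcal{E})\cdot p^n$ and replacing $p$ by $\mathrm{rdim}(X)+1$ gives
\begin{eqnarray*}
\frac{\chi(\bar{\mathcal{E}})}{d^n}=\mathrm{rk}(\mathcal{E})\cdot (\mathrm{rdim}(X)+1)^n,
\end{eqnarray*}
which is exactly the first assertion. The second assertion is then the special case $d=1$, obtained by setting $d=1$ so that $d^n=1$ and the denominator disappears, yielding $\chi(\bar{\mathcal{E}})=\mathrm{rk}(\mathcal{E})\cdot (\mathrm{rdim}(X)+1)^n$.

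Thus the corollary is a purely formal substitution into an already-proven identity, and there is essentially no genuine obstacle: the only thing being used beyond the Euler-characteristic computation is the equality of integers $p=\mathrm{rdim}(X)+1$ granted by hypothesis. I would note that, unlike Theorem 6.1, here the relation is an exact equality rather than an inequality, precisely because the hypothesis $\mathrm{rdim}(X)+1=\mathrm{ind}(X)$ replaces the inequality $\mathrm{rdim}(X)+1\leq \mathrm{ind}(X)$ used there; no appeal to \cite{NV3} is needed once the equality is assumed. The mild subtlety worth flagging is merely notational consistency, namely that $\mathcal{E}$ denotes an Ulrich bundle on $\phi_{pd}(X)$, equivalently for $(X,\mathcal{O}_X(pd))$, so that the preliminary identity applies verbatim.
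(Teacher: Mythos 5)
Your proof is correct and is exactly the argument the paper intends: the corollary is an immediate substitution of $p=\mathrm{rdim}(X)+1$ into the identity $\chi(\bar{\mathcal{E}})/d^n=\mathrm{rk}(\mathcal{E})\cdot p^n$ derived just before Theorem 6.1, which is why the paper states it without a separate proof. Your remark that the hypothesis upgrades the inequality of Theorem 6.1 to an equality is also accurate.
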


%\begin{cor}
%		Let $X$5555 be as above and denote by $\mathcal{E}$ an Ulrich bundle on $\phi_{pd}(X)$. Assume $p=\mathrm{ind}(X)$. Then
%		$$
%		\frac{\chi(\bar{\mathcal{E}})}{d^n}\geq \mathrm{uc}(\phi_{pd}(X))\cdot (\mathrm{rdim}(X)+1)^n.
%		$$
%		In particular, if $d=1$, then
%		$$
%		\chi(\bar{\mathcal{E}})\geq \mathrm{uc}(\phi_{p}(X))\cdot (\mathrm{rdim}(X)+1)^n.
%		$$
%\end{cor}
%If period eqals index, it is conjectured in \cite{NV2} that $\mathrm{rdim}(X)+1=\mathrm{ind}(X)$. So it is sensible to formulate
\begin{thm}
		Let $X$ be as above and assume $\mathrm{rdim}(X)+1=p$. Denote by $\mathcal{E}$ an Ulrich bundle on $\phi_{pd}(X)$. Then 
		$$
		\mathrm{uc}(\phi_{pd}(X))=\mathrm{rdim}(X)+1
		$$
		if and only if
		$$
	\mathrm{uc}(\phi_{pd}(X))=\frac{1}{d}\sqrt[n]{ \frac{\chi(\bar{\mathcal{E}})}{\mathrm{rk}(\mathcal{E})} }.
	$$
	In particular, 
	$$
	\mathrm{uc}(\phi_{p}(X))=\mathrm{rdim}(X)+1
	$$
	if and only if
	$$
		\mathrm{uc}(\phi_{p}(X))=\sqrt[n]{ \frac{\chi(\bar{\mathcal{E}})}{\mathrm{rk}(\mathcal{E})} }.
	$$	
\end{thm}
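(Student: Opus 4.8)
The plan is to show that, under the standing hypothesis $\mathrm{rdim}(X)+1=p$, both equalities appearing in the claimed equivalence are in fact the \emph{same} numerical condition, namely $\mathrm{uc}(\phi_{pd}(X))=p$; the equivalence is then immediate.

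First I would invoke the Euler-characteristic identity recorded at the beginning of this section. Since $\mathcal{E}$ is an Ulrich bundle for $(X,\mathcal{O}_X(pd))$, Proposition 3.2 (applied to the base change to $\bar{k}$) shows that $\bar{\mathcal{E}}$ is an Ulrich bundle for $(\mathbb{P}^n,\mathcal{O}_{\mathbb{P}^n}(pd))$. Hence its Hilbert polynomial is the one displayed above, and evaluation at $l=0$ together with $\mathrm{rk}(\bar{\mathcal{E}})=\mathrm{rk}(\mathcal{E})$ gives $\chi(\bar{\mathcal{E}})=\mathrm{rk}(\mathcal{E})\cdot (pd)^n$. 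Dividing by $\mathrm{rk}(\mathcal{E})$ and taking the (positive real) $n$-th root --- which is unambiguous because $pd\geq 1$ --- yields $\sqrt[n]{\chi(\bar{\mathcal{E}})/\mathrm{rk}(\mathcal{E})}=pd$, and therefore $\tfrac{1}{d}\sqrt[n]{\chi(\bar{\mathcal{E}})/\mathrm{rk}(\mathcal{E})}=p$.

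It follows that the right-hand condition $\mathrm{uc}(\phi_{pd}(X))=\tfrac{1}{d}\sqrt[n]{\chi(\bar{\mathcal{E}})/\mathrm{rk}(\mathcal{E})}$ reads verbatim $\mathrm{uc}(\phi_{pd}(X))=p$, while the hypothesis $\mathrm{rdim}(X)+1=p$ turns the left-hand condition $\mathrm{uc}(\phi_{pd}(X))=\mathrm{rdim}(X)+1$ into the identical statement $\mathrm{uc}(\phi_{pd}(X))=p$. Thus the two conditions coincide, and each trivially implies the other. The ``in particular'' assertion is the specialization $d=1$, where the factor $\tfrac{1}{d}$ disappears. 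I expect no genuine obstacle: the entire content sits in the Euler-characteristic formula, which is independent of the chosen Ulrich bundle because the Hilbert polynomial of an Ulrich bundle on $(\mathbb{P}^n,\mathcal{O}(pd))$ is pinned down by its rank alone; the only points worth a word are that $\bar{\mathcal{E}}$ remains Ulrich (handled by Proposition 3.2) and that the $n$-th root is taken in the positive reals.
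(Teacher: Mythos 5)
Your proposal is correct and rests on exactly the same ingredients as the paper's proof: the Euler-characteristic identity $\chi(\bar{\mathcal{E}})=\mathrm{rk}(\mathcal{E})\cdot(pd)^n$ for the base-changed Ulrich bundle together with the standing hypothesis $\mathrm{rdim}(X)+1=p$. The paper runs the substitution separately in each direction of the equivalence, whereas you observe up front that both conditions collapse to the single statement $\mathrm{uc}(\phi_{pd}(X))=p$; this is a slightly cleaner presentation of the same argument, not a different one.
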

\begin{proof}
Assume $\mathrm{uc}(\phi_{pd}(X))=\mathrm{rdim}(X)+1$. Then with 
$$
\frac{\chi(\bar{\mathcal{E}})}{d^n}= \mathrm{rk}(\mathcal{E})\cdot (\mathrm{rdim}(X)+1)^n
$$	
we obtain
	$$
\frac{\chi(\bar{\mathcal{E}})}{d^n}=\mathrm{rk}(\mathcal{E})\cdot \mathrm{uc}(\phi_{pd}(X))^{n}.
$$
Resolving to $\mathrm{uc}(\phi_{pd}(X))$ yields
$$
\mathrm{uc}(\phi_{pd}(X))=\frac{1}{d}\sqrt[n]{ \frac{\chi(\bar{\mathcal{E}})}{\mathrm{rk}(\mathcal{E})} }.
$$
On the other hand, if the latter equality holds, then 
$$
\frac{\chi(\bar{\mathcal{E}})}{d^n}=\mathrm{rk}(\mathcal{E})\cdot \mathrm{uc}(\phi_{pd}(X))^{n}.
$$
And since
$$
\frac{\chi(\bar{\mathcal{E}})}{d^n}= \mathrm{rk}(\mathcal{E})\cdot (\mathrm{rdim}(X)+1)^n,
$$
we get
$$
\mathrm{rk}(\mathcal{E})\cdot \mathrm{uc}(\phi_{pd}(X))^{n}=\mathrm{rk}(\mathcal{E})\cdot (\mathrm{rdim}(X)+1)^n
$$
implying $\mathrm{uc}(\phi_{pd}(X))=\mathrm{rdim}(X)+1$.
\end{proof}
%\begin{rema}
%	\textnormal{Notice that in Theorem 6.3 $\mathrm{uc}(\phi_{pd}(X))=\mathrm{rdim}(X)+1$ is also equivalent to 
%	$$
%\frac{\chi(\bar{\mathcal{E}})}{d^n}=(\mathrm{rdim}(X)+1)^{n+1}. 
%$$
%The proof is analogous. If one is interessted in the Ulrich complexity, provided $\mathrm{rdim}$ is known, the latter is a more appropriate condition to check. }
%\end{rema}
\begin{cor}
Let $X$ be a non-split Brauer--Severi variety of dimension $p-1$, where $p$ denotes an odd prime. Assume $\mathrm{ind}(X)\leq 3$. Then
$$
\chi(\bar{\mathcal{E}})=\mathrm{rk}(\mathcal{E})\cdot (\mathrm{rdim}(X)+1)^{p-1}.
$$
Hence 
$$
\mathrm{uc}(\phi_{p}(X))=\mathrm{rdim}(X)+1
$$
if and only if
$$
\mathrm{uc}(\phi_{p}(X))=\sqrt[p-1]{ \frac{\chi(\bar{\mathcal{E}})}{\mathrm{rk}(\mathcal{E})} }.
$$	
	\end{cor}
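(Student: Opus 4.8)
The plan is to obtain this corollary as an immediate specialization of Corollary 6.2 and Theorem 6.4, so the only real task is to check that $X$ meets the hypotheses $\mathrm{rdim}(X)+1=\mathrm{ind}(X)=p$ required there, with $n=\dim(X)=p-1$ and $d=1$.

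First I would determine the arithmetic invariants of $X$. Since $\dim(X)=p-1$, the associated central simple algebra $A$ has degree $p$. The index divides the degree, so $\mathrm{ind}(A)\in\{1,p\}$ because $p$ is prime; and since $X$ is non-split we have $\mathrm{ind}(A)>1$, whence $\mathrm{ind}(X)=\mathrm{ind}(A)=p$. As the period divides the index and the two invariants share the same prime factors, this forces $\mathrm{per}(X)=p$ as well. Thus period equals index equals $p$. Invoking the hypothesis $\mathrm{ind}(X)\leq 3$ together with \cite{NV3}, Proposition 4.1 and Theorem 1.4, I would then conclude $\mathrm{rdim}(X)+1=\mathrm{ind}(X)=p$. (For an odd prime this constraint in fact forces $p=3$, i.e.\ $X$ is a non-split Brauer--Severi surface, but it is cleaner to argue uniformly in $p$.)

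With $\mathrm{rdim}(X)+1=\mathrm{ind}(X)=p$ established, the first displayed identity is exactly Corollary 6.2 in the case $d=1$, $n=p-1$, giving $\chi(\bar{\mathcal{E}})=\mathrm{rk}(\mathcal{E})\cdot(\mathrm{rdim}(X)+1)^{p-1}$. The asserted equivalence then follows verbatim from the ``in particular'' clause of Theorem 6.4, again specialized to $d=1$ and $n=p-1$, since the hypothesis $\mathrm{rdim}(X)+1=p$ of that theorem is precisely what was just verified. There is no substantial obstacle here: the entire content is the invariant computation $\mathrm{per}(X)=\mathrm{ind}(X)=p$ and the appeal to \cite{NV3} for $\mathrm{rdim}(X)+1=\mathrm{ind}(X)$; once these are in place, both displays are formal consequences of the preceding results and require no further calculation.
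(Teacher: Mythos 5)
Your proposal is correct and follows essentially the same (implicit) route as the paper, which states this corollary without proof as a direct specialization of Corollary 6.2 and Theorem 6.3: the degree of the underlying algebra is the prime $p$, so non-splitness forces $\mathrm{ind}(X)=\mathrm{per}(X)=p$, and the hypothesis $\mathrm{ind}(X)\leq 3$ together with \cite{NV3} yields $\mathrm{rdim}(X)+1=\mathrm{ind}(X)=p$, after which both displays follow with $d=1$, $n=p-1$. The only slip is a harmless mislabel: the equivalence you invoke is Theorem 6.3 in the paper's numbering, not Theorem 6.4.
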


\begin{exam}
	\textnormal{We consider the case of non-split Brauer--Severi curves $C$. In this case we know that $p=2=\mathrm{ind}(C)$ and $\mathrm{rdim}(C)+1=\mathrm{uc}(\phi_2(C))=2$. Now we want to use Corollary 6.4. Note that $\mathcal{V}_1$ is the unique rank two Ulrich bundle on $\phi_2(C)$ (see \cite{NV1}, Proposition 5.4). All Ulrich bundles on $C$ are given as $\mathcal{V}_1^{\oplus m}$. After base change, we have $\mathcal{V}_1\otimes_k \bar{k}\simeq \mathcal{O}_{\mathbb{P}^1}(1)^{\oplus 2}$. One can check that $\chi(\mathcal{O}_{\mathbb{P}^1}(1)^{\oplus 2})= 4$ and therefore $\chi(\mathcal{V}_1^{\oplus m})=4m$. Since $\mathrm{rk}(\mathcal{V}_1^{\oplus m})=2m$, we obtain
	$$
	\frac{\chi(\bar{\mathcal{E}})}{\mathrm{rk}(\mathcal{E})} =2
	$$	
	for any Ulrich bundle on $C$. This implies $\mathrm{uc}(\phi_{2}(C))=\mathrm{rdim}(C)+1$. }
\end{exam}
%\begin{exam}
%	\textnormal{Let us consider a non-split Brauer--Severi surface over $k$. }
%\end{exam}
Let $X$ be a non-split Brauer--Sevei variety of dimension $p-1$, where $p$ denotes a prime and consider again $\chi(\bar{\mathcal{E}}(l))$. Instead of setting $l=0$, we can also set $l=tp$, where $p$ is the period. This is necessary because $\mathcal{O}_X(p)$ generates the Picard group. In this case, we have
%$$
%\mathrm{rdim}(X)+1=\mathrm{uc}(\phi_p(X))
%$$
%if and only if
$$
\chi(\bar{\mathcal{E}}(tp))=\mathrm{rk}(\mathcal{E})\cdot p^{p-1}\cdot \binom{p+t-1}{p-1}.
$$
Abbreviating the binomial coefficient by $b(p+t,p)$, one can show that 
$$
\mathrm{rdim}(X)+1=\mathrm{uc}(\phi_p(X))
$$
if and only if
$$
\mathrm{uc}(\phi_{p}(X))=\sqrt[p-1]{ \frac{\chi(\bar{\mathcal{E}})}{\mathrm{rk}(\mathcal{E})\cdot b(p+t,p)} }
$$	
\noindent
For $t=0$ we get back Corollary 6.4.
In the case of generalized Brauer--Severi varieties there are similar results as for Brauer--Severi varieties. So let $X=\mathrm{BS}(d,A)$ be a generalized Brauer--Severi variety embedded by  $\mathrm{BS}(m,A)\rightarrow \mathbb{P}(\wedge^{mn}(A))$ via the very ample line bundle $\mathcal{M}$. We can scale by an positive integer $e$ an embed $X$ via $\mathcal{M}^{\otimes e}$. Note that $n=\mathrm{deg}(A)$. The degree of $X$ is then given by $\mathrm{deg}(X,\mathcal{M}^{\otimes e})=e^{\mathrm{dim}(X)}\mathrm{deg}(X,\mathcal{M})$. It is well known that the Euler characteristic of an Ulrich sheaf $\mathcal{E}$ on $X$ is given as
$$
\chi(\mathcal{E}(t))=\mathrm{rk}(\mathcal{E})\cdot \mathrm{deg}(X)\cdot \binom{t+\mathrm{dim}(X)}{\mathrm{dim}(X)}.
$$
As mentioned before, for $\mathrm{BS}(m,A)$ there exists a finite Galois field extension $E$ such that $\mathrm{BS}(m,A)\otimes_k E\simeq \mathrm{Grass}_E(mn,n^2)\simeq \mathrm{Grass}_E(m,n)$. The Picard group $\mathrm{Pic}(\mathrm{Grass}_E(m,n))$ is isomorphic to $\mathbb{Z}$ and has ample generator $\mathcal{O}(1)\simeq \mathrm{det}(\mathcal{Q})$ with $\mathcal{Q}$ being the universal quotient bundle on $\mathrm{Grass}_E(m,n)$. Since $\mathrm{Pic}(\mathrm{BS}(m,A))\simeq\mathbb{Z}$, there is a positive generator $\mathcal{L}$ such that $\mathcal{L}\otimes_k E\simeq \mathcal{O}(r)$ for a suitable $r>0$. The smallest such $r$ is called the period of $X$. Now from \cite{NV}, Remark 7.3 we conclude that $r=\mathrm{per}(A^{\otimes m})$. Now let $\mathcal{E}$ be an Ulrich bundle for $(X,\mathcal{M}^{\otimes e})$. Then $\mathcal{E}_E:=\mathcal{E}\otimes_k E$ is Ulrich on $\mathrm{Grass}_E(m,n)$. Since $\mathcal{L}^{\otimes s}=\mathcal{M}$, the Euler characteristic of $\mathcal{E}_E$ becomes
$$
\chi(\mathcal{E}_E(t))=\frac{\mathrm{rk}(\mathcal{E})}{\mathrm{dim}(X)!}\cdot \mathrm{deg}(\mathrm{Grass}_E(m,n)))\cdot (s\cdot e\cdot \mathrm{per}(A^{\otimes m}))^{\mathrm{dim}(X)}\cdot \prod_{i=1}^{\mathrm{dim}(X)}(t+i).
$$
In particular, for $t=0$ we have
$$
\chi(\mathcal{E}_E)=\mathrm{rk}(\mathcal{E})\cdot \mathrm{deg}(\mathrm{Grass}_E(m,n)))\cdot (s\cdot e\cdot \mathrm{per}(A^{\otimes m}))^{\mathrm{dim}(X)}.
$$
Note that $\mathrm{per}(A^{\otimes m})=\mathrm{per}(A)/\mathrm{gcd}(\mathrm{per}(A),m)$. By setting $\mathrm{per}(A)=p$, we have 
$$
\chi(\mathcal{E}_E)=\mathrm{rk}(\mathcal{E})\cdot \mathrm{deg}(\mathrm{Grass}_E(m,n)))\cdot ( \frac{s\cdot e\cdot p}{\mathrm{gcd}(p,m)})^{\mathrm{dim}(X)}.
$$
And if the period is a prime, we get
$$
\chi(\mathcal{E}_E)=\mathrm{rk}(\mathcal{E})\cdot \mathrm{deg}(\mathrm{Grass}_E(m,n))\cdot ( s\cdot e\cdot p)^{\mathrm{dim}(X)}.
$$
Let us write $\mathbb{G}_{m,n}$ for $\mathrm{Grass}_E(m,n)$. 
\begin{thm}
	Let $X$ be the generalized Brauer--Severi variety from above over a field $k$ of characteristic zero. Assume $\mathrm{rdim}(X)+1=p$. Denote by $\mathcal{E}$ an Ulrich bundle on $(X,\mathcal{M}^{\otimes e})$. Then
	$$
	\chi(\mathcal{E}_E)\cdot (\frac{\mathrm{gcd}(p,m)}{s\cdot e})^{\mathrm{dim}(X)}\geq \mathrm{uc}((X,\mathcal{M}^{\otimes e}))\cdot \mathrm{deg}(\mathbb{G}_{m,n})\cdot (\mathrm{rdim}(X)+1)^{\mathrm{dim}(X)}.
	$$
\end{thm}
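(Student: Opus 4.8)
The plan is to take the closed-form expression for $\chi(\mathcal{E}_E)$ established in the paragraph preceding the statement as the starting point, and to convert it into the desired inequality by one algebraic rearrangement together with the defining minimality property of the Ulrich complexity. Writing $D=\dim(X)$ and $G=\deg(\mathbb{G}_{m,n})$ for brevity, the general computation above reads
$$
\chi(\mathcal{E}_E)=\mathrm{rk}(\mathcal{E})\cdot G\cdot\left(\frac{s\cdot e\cdot p}{\gcd(p,m)}\right)^{D}.
$$
First I would multiply both sides by $\left(\gcd(p,m)/(s\cdot e)\right)^{D}$, which cancels the corresponding factor on the right and yields the clean identity
$$
\chi(\mathcal{E}_E)\cdot\left(\frac{\gcd(p,m)}{s\cdot e}\right)^{D}=\mathrm{rk}(\mathcal{E})\cdot G\cdot p^{D}.
$$

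The second step is to invoke the hypothesis $\mathrm{rdim}(X)+1=p$, substituting $p^{D}=(\mathrm{rdim}(X)+1)^{D}$ into the right-hand side. At this point the identity already has exactly the shape of the claimed inequality, except that $\mathrm{rk}(\mathcal{E})$ appears where $\mathrm{uc}((X,\mathcal{M}^{\otimes e}))$ is wanted. The final step is to recall that $\mathrm{uc}((X,\mathcal{M}^{\otimes e}))$ is by definition the smallest rank of an Ulrich bundle for $(X,\mathcal{M}^{\otimes e})$; since $\mathcal{E}$ is such a bundle, $\mathrm{rk}(\mathcal{E})\geq\mathrm{uc}((X,\mathcal{M}^{\otimes e}))$, and as $G>0$ and $p>0$ this inequality is preserved after multiplying through by $G\cdot(\mathrm{rdim}(X)+1)^{D}$. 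Chaining the three steps gives the assertion.

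I do not expect a genuine obstacle: the substantive content is the Euler characteristic formula for $\mathcal{E}_E$, which is already derived above from $\mathcal{L}^{\otimes s}=\mathcal{M}$, the identification $r=\mathrm{per}(A^{\otimes m})$, the relation $\mathrm{per}(A^{\otimes m})=\mathrm{per}(A)/\gcd(\mathrm{per}(A),m)$, and the standard Euler characteristic of an Ulrich sheaf on the Grassmannian. The argument parallels the proof of Theorem 6.1, the differences being that the period $p$ rather than the index enters through the formula and that $\mathrm{rdim}(X)+1=p$ is used as an exact equality instead of the bound $\mathrm{rdim}(X)+1\leq\mathrm{ind}(A)$. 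The only thing to watch is the bookkeeping of the exponent $D=\dim(X)$, so that the $\gcd$-factor, the period factor and the representability factor all carry the same power and the cancellation is exact, with no stray factor of $\deg(\mathbb{G}_{m,n})$ lost and the binomial $\binom{t+D}{D}$ correctly specialized to $1$ at $t=0$.
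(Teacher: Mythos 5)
Your proposal is correct and follows essentially the same route the paper intends: the paper only remarks that the proof is ``completely analogous'' to Theorem 6.1, and your argument is exactly that analogue --- take the Euler characteristic identity $\chi(\mathcal{E}_E)=\mathrm{rk}(\mathcal{E})\cdot \mathrm{deg}(\mathbb{G}_{m,n})\cdot\bigl(\tfrac{s\cdot e\cdot p}{\gcd(p,m)}\bigr)^{\dim(X)}$ derived just before the statement, clear the $\gcd$-factor, substitute $p=\mathrm{rdim}(X)+1$, and bound $\mathrm{rk}(\mathcal{E})$ below by $\mathrm{uc}((X,\mathcal{M}^{\otimes e}))$. Your observation that the only difference from Theorem 6.1 is using the equality $\mathrm{rdim}(X)+1=p$ in place of the inequality $\mathrm{rdim}(X)+1\leq\mathrm{ind}(X)$ is also accurate.
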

\begin{thm}
		Let $X$ be the generalized Brauer--Severi variety from above over a field $k$ of characteristic zero. Assume $\mathrm{rdim}(X)+1=\mathrm{ind}(A)=p$. Denote by $\mathcal{E}$ an Ulrich bundle on $(X,\mathcal{M}^{\otimes e})$. Then
		$$
		\mathrm{uc}((X,\mathcal{M}^{\otimes e}))=\mathrm{rdim}(X)+1
		$$
		if and only if
		%$$
		%	\chi(\mathcal{E_E})\cdot (\frac{\mathrm{gcd}(p,m)}{s\cdot e})^{\mathrm{dim}(X)}=\mathrm{deg}(\mathbb{G}_{m,n})\cdot (\mathrm{rdim}(X)+1)^{\mathrm{dim}(X)+1}
		%$$
		%or if and only if
		%$$
		%	\chi(\mathcal{E}_E)\cdot (\frac{\mathrm{gcd}(p,m)}{s\cdot e})^{\mathrm{dim}(X)}=\mathrm{deg}(\mathbb{G}_{m,n})\cdot (\mathrm{uc}((X,\mathcal{L})))^{\mathrm{dim}(X)+1}.
		%$$
		$$
		\mathrm{uc}((X,\mathcal{M}^{\otimes e}))=(\frac{\mathrm{gcd}(p,m)}{s\cdot e})\cdot \sqrt[\mathrm{dim}(X)]{ \frac{ \chi(\mathcal{E}_E)}{\mathrm{rk}(\mathcal{E})\cdot \mathrm{deg}(\mathbb{G}_{m,n}) } }.
		$$
\end{thm}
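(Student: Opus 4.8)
The plan is to follow the same algebraic strategy as in the proof of Theorem 6.3, now feeding in the Euler characteristic identity for $\mathcal{E}_E$ derived in the paragraph preceding the statement. Recall that for an Ulrich bundle $\mathcal{E}$ on $(X,\mathcal{M}^{\otimes e})$, base change to the Galois splitting field $E$ produces an Ulrich bundle $\mathcal{E}_E$ on $\mathbb{G}_{m,n}$ (via Proposition 3.2 together with the characteristic zero hypothesis, which guarantees both existence and splitting), and one has
$$
\chi(\mathcal{E}_E)=\mathrm{rk}(\mathcal{E})\cdot \mathrm{deg}(\mathbb{G}_{m,n})\cdot \Bigl(\frac{s\cdot e\cdot p}{\mathrm{gcd}(p,m)}\Bigr)^{\mathrm{dim}(X)}.
$$
The hypothesis fixes $\mathrm{ind}(A)=p=\mathrm{per}(A)$, placing us in the period-equals-index regime, and the assumption $\mathrm{rdim}(X)+1=p$ is what allows the substitution of $\mathrm{rdim}(X)+1$ for $p$ throughout.

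For the forward implication I would assume $\mathrm{uc}((X,\mathcal{M}^{\otimes e}))=\mathrm{rdim}(X)+1$ and insert this value in place of $p$ on the right-hand side of the displayed identity, obtaining
$$
\Bigl(\frac{s\cdot e\cdot \mathrm{uc}((X,\mathcal{M}^{\otimes e}))}{\mathrm{gcd}(p,m)}\Bigr)^{\mathrm{dim}(X)}=\frac{\chi(\mathcal{E}_E)}{\mathrm{rk}(\mathcal{E})\cdot \mathrm{deg}(\mathbb{G}_{m,n})}.
$$
Extracting the $\mathrm{dim}(X)$-th root, which is unambiguous since every quantity in sight is positive, and solving for the Ulrich complexity then yields the claimed closed form
$$
\mathrm{uc}((X,\mathcal{M}^{\otimes e}))=\Bigl(\frac{\mathrm{gcd}(p,m)}{s\cdot e}\Bigr)\cdot \sqrt[\mathrm{dim}(X)]{\frac{\chi(\mathcal{E}_E)}{\mathrm{rk}(\mathcal{E})\cdot \mathrm{deg}(\mathbb{G}_{m,n})}}.
$$

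For the converse I would instead start from this closed form, raise both sides to the power $\mathrm{dim}(X)$, and clear the factor $(\mathrm{gcd}(p,m)/(s\cdot e))^{\mathrm{dim}(X)}$ to get
$$
\mathrm{uc}((X,\mathcal{M}^{\otimes e}))^{\mathrm{dim}(X)}=\Bigl(\frac{\mathrm{gcd}(p,m)}{s\cdot e}\Bigr)^{\mathrm{dim}(X)}\cdot \frac{\chi(\mathcal{E}_E)}{\mathrm{rk}(\mathcal{E})\cdot \mathrm{deg}(\mathbb{G}_{m,n})}.
$$
Substituting the Euler characteristic identity with $p=\mathrm{rdim}(X)+1$ collapses the right-hand side to $(\mathrm{rdim}(X)+1)^{\mathrm{dim}(X)}$, so comparing $\mathrm{dim}(X)$-th powers of positive numbers forces $\mathrm{uc}((X,\mathcal{M}^{\otimes e}))=\mathrm{rdim}(X)+1$.

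I expect no genuine obstacle here: the argument is essentially a one-line inversion of the root, exactly parallel to Theorem 6.3, and the substance of the statement is simply that under $\mathrm{rdim}(X)+1=p$ the displayed root expression is identically equal to $p$, so the two sides of the equivalence are two encodings of $\mathrm{uc}((X,\mathcal{M}^{\otimes e}))=\mathrm{rdim}(X)+1$. The only points demanding care are bookkeeping rather than mathematics. First, the Euler characteristic identity must be used in its $\mathrm{gcd}(p,m)$ form and not the simplified prime-period version, which silently assumes $p\nmid m$; retaining the factor $\mathrm{gcd}(p,m)$ is precisely what the statement records. Second, the root extraction is legitimate only once the positivity of $\chi(\mathcal{E}_E)$, $\mathrm{rk}(\mathcal{E})$, $\mathrm{deg}(\mathbb{G}_{m,n})$, $s$ and $e$ has been noted.
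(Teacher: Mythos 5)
Your proposal is correct and matches the paper's intent exactly: the paper gives no separate argument for this theorem, stating only that the proof is completely analogous to that of Theorem 6.3, and your substitution of $p=\mathrm{rdim}(X)+1$ into the Euler characteristic identity $\chi(\mathcal{E}_E)=\mathrm{rk}(\mathcal{E})\cdot \mathrm{deg}(\mathbb{G}_{m,n})\cdot \bigl(\tfrac{s\cdot e\cdot p}{\mathrm{gcd}(p,m)}\bigr)^{\mathrm{dim}(X)}$ followed by root extraction is precisely that analogous argument. Your attention to using the $\mathrm{gcd}(p,m)$ form of the identity rather than the prime-period simplification is the right bookkeeping point.
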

The proofs of both theorems are completely analogous to the proofs of the corresponding statements for Brauer--Severi varieties. Similar results can also be formulated for involution varieties. In this case, the period needs to be determind, which is possible using results form \cite{DT}. 
\begin{rema}
	\textnormal{In the present paper we have seen that sometimes $\mathrm{uc}=\mathrm{rdim}(X)+b$, where $b\in\mathbb{Z}$. And it could also be that $\mathrm{uc}=c\cdot \mathrm{rdim}(X)$ for some positive integer $c$. For Brauer--Severi varieties one can formulate a statement which is analogue to Theorem 6.3 and considers the situation where $\mathrm{uc}=c\cdot \mathrm{rdim}(X)+b$. We omit the proof because it is nearly identical to the proof of Theorem 6.3. So, let $X$ be a Brauer--Severi variety of period $p$ and assume $\mathrm{rdim}(X)+1=p$. Let $\mathcal{E}$ be an Ulrich bundle on $\phi_pd(X)$. Then 
		$$
		\mathrm{uc}(\phi_{pd}(X))=c\cdot\mathrm{rdim}(X)+b
		$$
		if and only if
		$$
		\mathrm{uc}(\phi_{pd}(X))=(b-c)+\frac{c}{d}\sqrt[n]{ \frac{\chi(\bar{\mathcal{E}})}{\mathrm{rk}(\mathcal{E})}}.	
		$$ }
	\end{rema}
\section{Further examples}
One can ask whether $\mathrm{rdim}+1=\mathrm{uc}$ or $\mathrm{rdim}=\mathrm{uc}$ holds for other than Brauer--Severi varieties, twisted flags or including involution varieties. One other example is that of a del Pezzo surface $X$ over an algebraically closed field. In this case $\mathrm{uc}(X)=1$ and since a del Pezzo surface has a full exceptional collection, implying $\mathrm{rdim}(X)=0$, we find $\mathrm{uc}(X)=\mathrm{rdim}(X)+1$. But let us start more systematically. Without any claim to completeness, we give a list of examples:\\

\noindent
\textbf{curves}:
\noindent
Let $C$ be a smooth curve over a field $k$. In the case $g(C)=0$, the curve is a Brauer--Severi variety of period $p=1$ or $p=2$. We mentioned in the introduction that $\mathrm{rdim}(C)+1=\mathrm{uc}(\phi_p(C))$. Another interesting example appears when considering more general genus zero curves, namely so called twisted ribbons. These twisted ribbons are treated in Theorem 7.1 below. In the case where $g(C)\geq 1$, we have $\mathrm{rdim}(C)=1$, since there are no non-trivial semiorthogonal decompositions (see \cite{SOK}). It is known that the  Ulrich complexity of such a curve is one or two, depending on whether the curve admits rational points or not. So we find that either $\mathrm{rdim}(C)+1=\mathrm{uc}(C)$ or $\mathrm{rdim}(C)=\mathrm{uc}(C)$ in case of any curve $C$. We also see that the nature of the relation between $\mathrm{rdim}$ and $\mathrm{uc}$ also depends on the arithmetic of the curve respectively the ground field $k$.\\

\noindent
Let us now consider the case of surfaces. The case of del Pezzo surfaces over an algebraically closed field was mentioned before. For del Pezzo surfaces over arbitrary base fields, the situation, to our best knowledge, is not yet completely understood. We give a list of known cases below. Recall, that by a del Pezzo surface over $k$ we mean a smooth, projective and geometrically integral surface with ample anticanonical class $\omega_S$. The degree of $S$ is the self-intersection number of $\omega_S$.\\
\textbf{del Pezzo surface of degree 9}:\\
a del Pezzo surface $S$ of degree 9 is a Brauer--Severi surface corresponding to a central simple algebra $A$ of degree 3. In this case $\mathrm{ind}(A)\leq 3$ and hence $\mathrm{rdim}(S)+1=\mathrm{ind}(A)$. The Ulrich complexity is still to be established, but Example 5.3 gives lower and upper bounds.\\
\textbf{del Pezzo surfaces of degree 8}:\\
del Pezzo surfaces of degree 8 are involution surfaces (see \cite{BERAU}). These surfaces are treated in Theorems 7.3, 7.4 and 7.5, where the relation between $\mathrm{rdim}$ and $\mathrm{uc}$ is completely determined.\\
\textbf{del Pezzo surfaces of degree 7}:\\
If $S$ is a del Pezzo surface $S$ of degree 7, one has $\mathrm{rdim}(S)+1=1$ (see \cite{BERAU}, p.36). Such a del Pezzo surface is the blow up of $\mathbb{P}^2$ at a closed point of degree 2. Notice that $\mathrm{uc}((\mathbb{P}^2,\mathcal{O}_{\mathbb{P}^2}(d)))=2$ if $d$ is even, according to \cite{CMR}. One can then use \cite{ASE}, Theorem 2 to obtain $\mathrm{rdim}(S)+2=\mathrm{uc}((\mathbb{P}^2,\mathcal{O}_{\mathbb{P}^2}(d)))$ in this case. Note that $d$ even is used to apply the results of \cite{ASE}.\\
\noindent
\textbf{del Pezzo surfaces of degree $\leq 6$}:\\
We don't know.\\
\noindent
\textbf{minimal ruled surfaces}:
Let $S$ be a minimal ruled surfaces over $\bar{k}$. In this case $D^b(S)=\langle D^b(C),D^b(C)\rangle$, where $C$ is the base curve. If $g(C)=0$, $\mathrm{rdim}(S)=0$. If $g(C)\geq 1$, $\mathrm{rdim}(S)=1$, since curves of genus at least one do not admit non-trivial semiorthogonal decompositions. In the case $g(C)=0$, we have $\mathrm{uc}((S,\mathcal{L}))=1$ for certain ample line bundles $\mathcal{L}$. This follows from the results in \cite{HOCH}. In the case $g(C)\geq 1$, we have that $\mathrm{uc}((S,\mathcal{L}))$ is 1 or 2, depending on the polarization (see \cite{ACMRV}). This shows that either $\mathrm{rdim}(S)+1=\mathrm{uc}(S)$ or $\mathrm{rdim}(S)=\mathrm{uc}(S)$ in the case of certain minimal ruled surfaces, too.\\
\noindent
\textbf{abelian surface}:\\
It is known that $\mathrm{rdim}(S)=2$ if $S$ is an abelian surface. This is because $D^b(S)$ has non non-trivial semiorthogonal decompositions (see \cite{KUZ}). Now from \cite{BEV} we conclude $\mathrm{uc}((S,\mathcal{L}))\leq 2$. If $S$ is of rank one, $\mathrm{uc}((S,\mathcal{L}))=2$. This shows that either $\mathrm{rdim}(S)=\mathrm{uc}(S)$ or $\mathrm{rdim}(S)-1=\mathrm{uc}(S)$ in the case of abelian surfaces.\\
\noindent
\textbf{K3 surfaces}:\\
Let $S$ be a $K3$ surface. It is known that $D^b(S)$ has no non-tivial semiorthogonal decomposition (see \cite{KUZ}) and hence $\mathrm{rdim}(S)=2$. In \cite{DF0} it is shown that $\mathrm{uc}(S)\leq 2$ for any polarization on $S$. This shows that either $\mathrm{rdim}(S)=\mathrm{uc}(S)$ or $\mathrm{rdim}(S)-1=\mathrm{uc}(S)$ in the case of $K3$ surfaces.\\
\noindent
\textbf{surfaces with $q_g=q=0$}:\\
In this case it can happen that the derived category of the surface $S$ admits a so called \emph{phantom} category, being the complementary component of a exceptional collection which is not full. These complementary components have finite (or even zero) Grothendieck group and trivial Hochschild homology and by that reason they are called phnatoms. Phantoms are proved to exist for instance for classical Godeaux surfaces, Beauville surfaces or Burniat surfaces (see \cite{KUZ} and references therein). So for these surfaces $\mathrm{rdim}(S)\leq 2$. For certain ample line bundles $\mathcal{L}$ it can be proved that $\mathrm{uc}((S,\mathcal{L}))\leq 2$ (see \cite{BVIL} and \cite{CASN}): This shows that either $\mathrm{rdim}(S)=\mathrm{uc}((S,\mathcal{L}))$ or $\mathrm{rdim}(S)-1=\mathrm{uc}((S,\mathcal{L}))$ or $\mathrm{rdim}(S)+1=\mathrm{uc}((S,\mathcal{L}))$.\\
\noindent
\textbf{Fano 3-folds}:\\
In dimension three, we only want to mention one example stated previously in the present paper, namely that of a Fano 3-fold $X$ of index 2. If $X$ is defined over arbitrary $k$, it could be a Brauer--Severi 3-fold and the Corollary from the intruduction shows $\mathrm{uc}((X,\mathcal{O}_X(2)))=2$. Since the degree of the corresponding central simple algebra $A$ is 4, $\mathrm{ind}(A)\leq 4$. If $k=\mathbb{R}$, then $\mathrm{ind}(A)=2$ and we know that $\mathrm{rdim}(X)+1=2$. This shows $\mathrm{rdim}(X)+1=\mathrm{uc}((X,\mathcal{O}_X(2)))$. Over $\bar{k}$, \cite{BV}, Proposition 6.1 gives $\mathrm{uc}((X,\mathcal{O}_X(2)))\leq 2$. Note that $\mathrm{rdim}(X)\leq 3$. 

\noindent
\textbf{twisted ribbons}:\\
Another example is given when considering regular genus zero curves. Brauer--Severi curves are not the only examples of such curves. The Ulrich complexity can also be determined for so called ribbons of genus zero. Following Bayer and Eisenbud \cite{BYE}, we say that a \emph{ribbon} on a curve $C_0$ is a pair $(C,\iota)$, where $\iota:C_0\rightarrow C$ is a closed embedding having a square-zero sheaf of ideals $\mathcal{I}$ that is invertible as $\mathcal{O}_{C_0}$-module. The ribbons splits if $\iota : C_0\rightarrow C$ has a retraction $\rho: C\rightarrow C_0$  and write $C=C_0\oplus \mathcal{L}$ for some invertible $\mathcal{O}_{C_0}$-module $\mathcal{L}$. Now following \cite{FS} we call a curve $C$ with invariants $h^0(\mathcal{O}_C)=1$ and $h^1(\mathcal{O}_C)=0$ a genus zero curve. In \emph{loc. cit.} it is shown in Proposition 1.2 that if $C$ is regular it must be a twisted form of $\mathbb{P}^1$ or a twisted form of the split ribbon $\mathbb{P}^1\oplus \mathcal{O}_{\mathbb{P}^1}(-1)$. Now let $D$ is a regular genus zero curve without rational point whose Picard group is generated by the dualizing sheaf $\omega_D$. The vector space $\mathrm{Ext}^1(\mathcal{O}_D,\omega_D)$ is one dimensional and the non-split exact sequence 
$$
0\rightarrow \omega_D\rightarrow \mathcal{F}_D\rightarrow \mathcal{O}_D\rightarrow 0
$$
defines a vector bundle $\mathcal{F}_D$ of rank two. Up to isomorphism it does not depend on the choise of the extension and is canonically attached to $D$. In \cite{FS}, Theorem 10.2 all indecomposable locally free sheaves on $D$ were classified. In particular, all indecomposable vector bundles are of the form $\omega_D^{\otimes a}$ and $\mathcal{F}_D\otimes \omega_D^{\otimes b}$, with $a,b\in \mathbb{Z}$.

\begin{thm}
	Let $D$ be a regular zero one curve without rational point whose Picard group is generated by the dualizing sheaf, that is a Brauer--Severi curve or a twisted ribbon. Then $\mathrm{uc}((D,\omega_D^{\otimes -d}))=2$. 
\end{thm}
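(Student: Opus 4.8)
The plan is to handle the Brauer--Severi curve and the twisted ribbon simultaneously, using only the classification of indecomposable bundles recalled above together with Serre duality for the (Gorenstein, genus zero) curve $D$. First I would record the basic numerics. Since $D$ is Gorenstein of arithmetic genus zero, $\deg(\omega_D)=-2$ and $\chi(\omega_D^{\otimes j})=1-2j$ for all $j$, while Serre duality together with $h^0(\mathcal{O}_D)=1$ and $h^1(\mathcal{O}_D)=0$ gives $h^0(\omega_D)=0$ and $h^1(\omega_D)=1$. Because $\mathrm{Pic}(D)\simeq\mathbb{Z}$ is generated by $\omega_D$, the class $\omega_D^{\otimes -1}$ is the ample generator (of degree $2$), so $\omega_D^{\otimes -d}$ is very ample for every $d\geq 1$ by the same descent argument used for very ampleness (cf. Proposition 3.2 and Proposition 5.10), and the polarization $(D,\omega_D^{\otimes -d})$ is legitimate. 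For a curve the Ulrich condition on a sheaf $\mathcal{E}$ with respect to $H=\omega_D^{\otimes -d}$ is simply that $\mathcal{E}\otimes\omega_D^{\otimes d}$ be acyclic.

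Second I would prove $\mathrm{uc}\geq 2$ by ruling out Ulrich line bundles. If $\mathcal{E}$ is Ulrich then $\chi(\mathcal{E}\otimes\omega_D^{\otimes d})=0$. By hypothesis every line bundle on $D$ is of the form $\omega_D^{\otimes a}$, and then $\chi(\omega_D^{\otimes(a+d)})=1-2(a+d)$ is odd, hence nonzero. Thus no line bundle is Ulrich, and any Ulrich bundle has rank at least two.

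Third I would exhibit a rank two Ulrich bundle, namely $\mathcal{E}:=\mathcal{F}_D\otimes\omega_D^{\otimes -d}$. Here $\mathcal{E}\otimes\omega_D^{\otimes d}=\mathcal{F}_D$, so it suffices to show $\mathcal{F}_D$ is acyclic. The long exact cohomology sequence attached to $0\to\omega_D\to\mathcal{F}_D\to\mathcal{O}_D\to 0$, using $h^0(\omega_D)=0$ and $h^1(\mathcal{O}_D)=0$, collapses to $0\to H^0(\mathcal{F}_D)\to H^0(\mathcal{O}_D)\xrightarrow{\delta}H^1(\omega_D)\to H^1(\mathcal{F}_D)\to 0$ between the two one-dimensional spaces $H^0(\mathcal{O}_D)$ and $H^1(\omega_D)$. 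The connecting map $\delta$ is Yoneda multiplication by the extension class of $\mathcal{F}_D$ in $\mathrm{Ext}^1(\mathcal{O}_D,\omega_D)\simeq H^1(\omega_D)$, which is nonzero precisely because $\mathcal{F}_D$ arises from the non-split extension; being a nonzero map of one-dimensional spaces it is an isomorphism. Hence $H^0(\mathcal{F}_D)=H^1(\mathcal{F}_D)=0$, so $\mathcal{E}$ is Ulrich of rank two and $\mathrm{uc}((D,\omega_D^{\otimes -d}))\leq 2$. Combined with the previous step this gives the desired equality, and for the Brauer--Severi curve it recovers Example 5.3.

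The only delicate point is the identification of the connecting homomorphism $\delta$ with multiplication by the (nonzero) extension class, which is what forces the acyclicity of $\mathcal{F}_D$; everything else reduces to Riemann--Roch and Serre duality on a genus zero Gorenstein curve. A secondary point worth checking carefully is the very ampleness of $\omega_D^{\otimes -d}$ in the non-reduced (ribbon) case, but this follows from the same descent reasoning as in Proposition 5.10.
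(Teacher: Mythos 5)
Your proof is correct, and it reaches the same conclusion via the same key object but by a more self-contained route than the paper. The paper's proof also takes $\mathcal{F}_D$ twisted by a power of $\omega_D$ as the rank-two Ulrich bundle, but it simply cites \cite{FS} (page 30) for the acyclicity $h^0(\mathcal{F}_D)=h^1(\mathcal{F}_D)=0$, whereas you derive it from the long exact sequence of $0\to\omega_D\to\mathcal{F}_D\to\mathcal{O}_D\to 0$ together with the identification of the connecting map with Yoneda multiplication by the nonzero extension class; that identification is standard and your argument is complete. For the lower bound the paper rules out Ulrich line bundles by a case analysis of $h^0(\omega_D^{\otimes a})$ and $h^1(\omega_D^{\otimes a})$ using \cite{FS}, Proposition 10.3 and Serre duality, while your parity argument ($\chi(\omega_D^{\otimes j})=1-2j$ is odd, hence never zero) is shorter and avoids the case distinction; it only uses Riemann--Roch and additivity of degree on the Gorenstein curve, which are valid in the ribbon case. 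A further difference is that you treat the Brauer--Severi curve and the twisted ribbon uniformly, whereas the paper defers the Brauer--Severi case to \cite{NV1}. One small point in your favor: with the polarization $H=\omega_D^{\otimes -d}$ the Ulrich condition is the acyclicity of $\mathcal{E}\otimes\omega_D^{\otimes d}$, so your normalization $\mathcal{E}=\mathcal{F}_D\otimes\omega_D^{\otimes -d}$ is the one with the correct sign (the paper writes $\mathcal{E}=\mathcal{F}_D\otimes\omega_D^{d}$ and then twists by $\omega_D^{-d}$, an apparent sign slip that does not affect the substance). Your remark that very ampleness of $\omega_D^{\otimes -d}$ deserves a check is fair --- the paper does not address it either --- but it is implicit in the statement of the theorem.
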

\begin{proof}
The result for Brauer--Severi curves can be found in \cite{NV1}. Let us prove the statement for twisted ribbons. Let $\mathcal{E}=\mathcal{F}_D\otimes \omega_D^d$. Then $h^0(\mathcal{E}\otimes \omega_D^{-d})=h^0(\mathcal{F}_D)=0$ and $h^1(\mathcal{E}\otimes \omega_D^{-d})=h^1(\mathcal{F}_D)=0$ (see \cite{FS}, page 30). This shows $\mathrm{uc}((D,\omega_D^{\otimes -d}))\leq 2$. Now we show that there is no Ulrich line bundle on $D$. Note that all line bundles are of the form $\omega_D^{\otimes a}$. Let us consider the vector spaces $h^0(\omega_D^{\otimes (a-d)})$ and $h^1(\omega_D^{\otimes (a-d)})$. Now from \cite{FS}, Proposition 10.3 we obtain  $h^0(\omega_D^{\otimes (a-d)})=0$ if and only if $a-d>0$. Considering $h^1(\omega_D^{\otimes (a-d)})=h^0(\omega_D^{\otimes (1-(a-d))})$, we see that $1-(a-d)\leq 0$ and therefore  $h^1(\omega_D^{\otimes (a-d)})\neq 0$ if $a-d>0$. This shows that there cannot be an Ulrich line bundle. Hence $\mathrm{uc}((D,\omega_D^{\otimes -d}))=2$.
\end{proof}	
\begin{rema}
	\textnormal{The categorical representability dimension of twisted ribbons of genus zero that are not Brauer--Severi curves are still to be established. We believe that $\mathrm{rdim}(D)=1$. A brief argument goes as follows: a semiorthogonal decopmposition of $D^b(D)$ should be given by $D^b(D)=\langle \mathcal{O}_D,\mathcal{F}_D\rangle$. Now $\langle \mathcal{F}_D\rangle \simeq D^b(\mathrm{End}(\mathcal{F}_D))$. But $ D^b(\mathrm{End}(\mathcal{F}_D))$ cannot be an admissible subcategory of $D^b(k)$. Hence $\mathrm{rdim}(D)=1$. Consequently, we conjecture:
	$$
\mathrm{rdim}(D)+1=\mathrm{uc}((D,\omega_D^{\otimes -d})).
$$ }
\end{rema}
\noindent
\textbf{involution surfaces or del Pezzo surfaces of degree 8}:\\
As mentioned above, in the case of surfaces one can also study involution surfaces over $k$. Such involution surfaces are del Pezzo surfaces of degree 8 and are  twisted forms of $\mathbb{P}^1\times \mathbb{P}^1$. So roughly, it is the product of two Brauer--Severi curves $C_1\times C_2$. %example where $\mathrm{rdim}+1=\mathrm{uc}$ arises if one considers the product of a Brauer--Severi curve $C$ with itself. 
Note that the product of two Brauer--Severi varieties $X_1\times X_2$ is isomorphic to a projective bundle $\mathbb{P}(\mathcal{E})\rightarrow X_1$. So in our situation, there are three cases: $C\times C$ or $C\times \mathbb{P}^1$ or $C_1\times C_2$, where $C,C_1, C_2$ are non-split Brauer--Severi curves. Let us start with $S=C\times C$ and let us consider $S$ as a projective bundle $\pi:\mathbb{P}(\mathcal{E})\rightarrow C$. With this notation, we obtain:
\begin{thm}
	Let $X=C\times C$ be the product of a non-split Brauer--Severi curve with itself let $\mathcal{L}$ be a very ample line bundle of the form $\pi^{*}\mathcal{A}+H$, where $H$ is the relative hyperplane section. Then
	$$
	\mathrm{uc}((C\times C,\mathcal{L}))=2=\mathrm{rdim}(C\times C)+1.
	$$
\end{thm}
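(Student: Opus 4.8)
The plan is to split the statement into the two equalities $\mathrm{uc}((C\times C,\mathcal{L}))=2$ and $\mathrm{rdim}(C\times C)=1$, and to treat the Ulrich complexity by base change to a minimal separable splitting field $E$ of $C$, which has degree $2$ and splits the underlying quaternion algebra $Q$. Over $E$ one has $S_E:=(C\times C)\otimes_k E\simeq \mathbb{P}^1\times\mathbb{P}^1$ with $\pi=p_1$, and the first task is to identify the class of $\mathcal{L}\otimes_k E=\mathcal{O}(\alpha,1)$. Since $H$ restricts to $\mathcal{O}(1)$ on the fibres of $\pi$, the second coordinate is $1$; since $H=\mathcal{O}_{\mathbb{P}(\mathcal{E})}(1)$ is a genuine line bundle on $S$ it descends, so by the exact sequence of Proposition 5.10 its class $\mathcal{O}(c_0,1)$ must satisfy $c_0+1\equiv 0 \pmod 2$, i.e. $c_0$ is odd; adding $\pi^*\mathcal{A}=\mathcal{O}(2m,0)$ (as $\mathcal{A}$ is a power of the generator $\mathcal{O}_C(2)$) keeps the first coordinate odd. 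Thus $\alpha$ is odd, and I expect this parity to be the crux of the whole argument.

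For the lower bound I would classify the Ulrich line bundles on $(S_E,\mathcal{O}(\alpha,1))$. Writing $L=\mathcal{O}(c,d)$ and imposing $H^{\ast}(L(-i\,\mathcal{O}(\alpha,1)))=0$ for $i=1,2$ via the Künneth formula on $\mathbb{P}^1\times\mathbb{P}^1$ (a sheaf $\mathcal{O}(p,q)$ is acyclic iff $p=-1$ or $q=-1$) yields exactly two solutions, $\mathcal{O}(\alpha-1,1)$ and $\mathcal{O}(2\alpha-1,0)$, both of which have $c+d$ odd because $\alpha$ is odd. Any Ulrich line bundle on $(C\times C,\mathcal{L})$ base changes, by Proposition 3.2, to one of these two. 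But a line bundle coming from $S$ descends, hence has $c+d$ even: indeed $\delta(\mathcal{O}(c,d))=(c+d)[Q]$, which one obtains from Proposition 5.10 by pulling back along the diagonal section of $p_i$ to see $\delta(\mathcal{O}(1,0))=\delta(\mathcal{O}(0,1))=[Q]$. This contradiction shows there is no Ulrich line bundle over $k$, so $\mathrm{uc}((C\times C,\mathcal{L}))\geq 2$.

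For the upper bound I would push forward. The structure morphism $f\colon S_E\to C\times C$ is finite surjective of degree $2$ with $f^{\ast}\mathcal{L}=\mathcal{L}\otimes_k E$, so Lemma 3.1 shows that the pushforward $f_{\ast}\mathcal{N}$ of any Ulrich line bundle $\mathcal{N}$ on $(S_E,\mathcal{L}\otimes_k E)$ is an Ulrich bundle on $(C\times C,\mathcal{L})$ of rank $2\cdot 1=2$ (this is the same mechanism as in Theorem 5.1, with $\mathrm{ind}=2$). Together with the lower bound this gives $\mathrm{uc}((C\times C,\mathcal{L}))=2$.

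Finally, for the categorical representability dimension I would start from the Bernardara decomposition $D^b(C)=\langle \mathcal{O}_C,\mathcal{V}\rangle$ with $\langle\mathcal{O}_C\rangle\simeq D^b(k)$ and $\langle\mathcal{V}\rangle\simeq D^b(Q)$, and form the full weak exceptional collection $\{\mathcal{O}_C\boxtimes\mathcal{O}_C,\ \mathcal{O}_C\boxtimes\mathcal{V},\ \mathcal{V}\boxtimes\mathcal{O}_C,\ \mathcal{V}\boxtimes\mathcal{V}\}$ on $C\times C$, whose semiorthogonality and fullness follow from the Künneth formula for $\mathrm{Hom}$ groups. By Remark 4.5 the associated components are $D^b(k),\ D^b(Q),\ D^b(Q),\ D^b(Q\otimes_k Q)$, and since $Q$ carries its canonical involution one has $Q\otimes_k Q\simeq M_4(k)$, so the last component is again $D^b(k)$. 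The two outer components are $0$-dimensional and the two copies of $D^b(Q)$ are admissible in $D^b(C)$ with $\dim C=1$, giving $\mathrm{rdim}(C\times C)\leq 1$; equality holds because $C\times C$ has no $k$-rational point (a point would project to one on $C$), so $\mathrm{rdim}(C\times C)\neq 0$. The main obstacle is the parity bookkeeping of the first paragraph: it is precisely the oddness of $\alpha$, forced by $H$ being the relative hyperplane section, that prevents either Ulrich line bundle from descending and pushes the complexity from $1$ up to $2$.
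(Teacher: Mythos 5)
Your proof is correct, and for the Ulrich-complexity half it takes a genuinely different route from the paper. The paper works entirely on the projective bundle $\mathbb{P}(\mathcal{E})\to C$ and invokes Hochenegger's results: the upper bound comes from the direct construction $(\pi^*\mathcal{V}_{-1})\otimes\mathcal{L}$ with $H^\bullet(C,\mathcal{V}_{-1})=0$ (\cite{HOCH}, Theorem 4.3), and the lower bound from the classification of Ulrich line bundles on $\mathbb{F}_0$ as $\pi^*\mathcal{F}\otimes\mathcal{L}$ with $\mathcal{F}\simeq\mathcal{O}_{\mathbb{P}^1}(-1)$, followed by the observation that this does not descend. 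You instead base change to the degree-$2$ splitting field, classify all Ulrich line bundles on $(\mathbb{P}^1\times\mathbb{P}^1,\mathcal{O}(\alpha,1))$ by hand via K\"unneth (finding both $\mathcal{O}(\alpha-1,1)$ and $\mathcal{O}(2\alpha-1,0)$, which is actually more complete than the paper's citation-based classification), kill both by the parity obstruction $\delta(\mathcal{O}(c,d))=(c+d)[Q]$, and get the upper bound by pushing forward along the degree-$2$ covering via Lemma 3.1 rather than constructing the bundle on $C\times C$ directly. What your approach buys is self-containedness and an explicit identification of where the arithmetic enters (the oddness of $\alpha$ forced by $H$ being a genuine line bundle); what the paper's approach buys is that the Ulrich bundle is exhibited explicitly as $(\pi^*\mathcal{V}_{-1})\otimes\mathcal{L}$ on $C\times C$ itself. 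The $\mathrm{rdim}$ half of your argument coincides with the paper's, using the same semiorthogonal decomposition into $\boxtimes$-products and the rational-point criterion for $\mathrm{rdim}=0$; your computation $\mathrm{End}(\mathcal{V}\boxtimes\mathcal{V})\simeq Q\otimes_kQ\simeq M_4(k)$ is the correct one (the paper writes $M_2(k)$, which is Brauer-equivalent and harmless for the conclusion). Two cosmetic points: Proposition 3.2 as stated concerns Brauer--Severi varieties, so for the base-change step you should appeal directly to flat base change of cohomology, as the paper itself does; and the phrase ``pulling back along the diagonal section'' should just be the functoriality of the Picard--Brauer boundary map with respect to the two projections.
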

\begin{proof}
	As mentioned above, the product $C\times C$ is isomorphic to some $\mathbb{P}(\mathcal{E})$ over $C$. Now we can use the results in \cite{HOCH}, where Ulrich bundles on projective bundles were studied. In particular, we use \cite{HOCH}, Theorem 4.3. Let us take the indecomposable vector bundle $\mathcal{V}_{-1}$ on $C$. The vector bundle $\mathcal{V}_{-1}$ is obtained from the Euler sequence on $C$ and satisfies $\mathcal{V}_{-1}\otimes_k \bar{k}\simeq \mathcal{O}_{\mathbb{P}^1}(-1)^{\oplus 2}$. Now $H^{\bullet}(X,\mathcal{V}_{-1})=0$ and therefore, by \cite{HOCH}, Theorem 4.3, the vector bundle $(\pi^{*}\mathcal{V}_{-1})\otimes \mathcal{L}$ is Ulrich. Now we show that there is no Ulrich line bundle. Any line bundle on $\mathbb{P}(\mathcal{E})$ must be of the form $\pi^{*}\mathcal{M} +nH$, where $\mathcal{M}$ is a line bundle on $C$ and $H$ is the relative hyperplane section. After base change, the projective bundle over $C$ becomes the Hirzebruch surface $\mathbb{F}_0$. This follows from the following fact: It is known that $\mathcal{E}\otimes_k \bar{k}\simeq \mathcal{O}_{\mathbb{P}^1}(a)^{\oplus 2}$. And from this we get that $\mathbb{P}(\mathcal{E})\rightarrow C$ becomes isomorphic to $\mathbb{F}_0\rightarrow \mathbb{P}^1$ after base change.  
	Now from \cite{HOCH}, Remark 3.3 we conclude that any Ulrich line bundle on $\mathbb{F}_0$ is of the form $\pi^{*}\mathcal{F}\otimes \mathcal{L}$ for some line bundle $\mathcal{F}$ on $\mathbb{P}^1$. So starting with an Ulrich line bundle $\mathcal{D}$ on $C\times C$ we obtain an Ulrich line bundle $\mathcal{D}\otimes_k \bar{k}$ on $\mathbb{F}_0$. This line bundle is of the form $\pi^{*}\mathcal{F}\otimes \mathcal{L}$ for some line bundle $\mathcal{F}$ on $\mathbb{P}^1$. From \cite{HOCH}, Theorem 4.3 it follows that $H^{\bullet}(\mathbb{P}^1,\mathcal{F})=0$. This is possible only if $\mathcal{F}\simeq \mathcal{O}_{\mathbb{P}^1}(-1)$. But $\pi^{*}\mathcal{F}\otimes \mathcal{L}$ does not descent to some line bundle on $C\times C$. This shows that the Ulrich complexity is two. 
	To determine $\mathrm{rdim}$, we use results in \cite{NV2} and refer to the references therein for details. Let $\mathcal{V}_1$ be the dual of the indecomposable vector bundle $\mathcal{V}_{-1}$ on $C$ comming from the Euler sequence as mentioned above. %On $C_2$ there is also such an indecomposable vector bundle $\mathcal{W}_{-1}$. Let us denote the dual by $\mathcal{W}_1$.
 It can be shown that there is a semiorthogonal decomposition
	$$
	D^b(C\times C)=\langle \mathcal{O}_{C}\boxtimes \mathcal{O}_{C},\mathcal{V}_1\boxtimes \mathcal{O}_{C},\mathcal{O}_{C}\boxtimes \mathcal{V}_1,\mathcal{V}_1\boxtimes \mathcal{V}_1 \rangle, 
	$$ 
	where $\mathcal{F}\boxtimes \mathcal{G}$ denotes $p_1^{*}\mathcal{F}\otimes p_2^{*}\mathcal{G}$ for the projections $p_i:C_1\times C_2\rightarrow C_i$. Let $A$ be the quaternion algebra corresponding to $C$. Now from \cite{NV2}, Proposition 4.5, we conclude that $\mathrm{End}(\mathcal{O}_{C}\boxtimes \mathcal{O}_{C})=k, \mathrm{End}(\mathcal{V}_1\boxtimes \mathcal{O}_{C})=A, \mathrm{End}(\mathcal{O}_{C}\boxtimes \mathcal{V}_1)=A$ and $\mathrm{End}(\mathcal{V}_1\boxtimes \mathcal{V}_1)=M_2(k)$. And since all the admissible components $\mathcal{D}_i$ in the semiorthogonal decomposition are equivalent to the bounded derived category of the Endomorphism algebra of the corresponding sheaves, we conclude $\mathcal{D}_1\simeq D^b(k)$, $\mathcal{D}_2\simeq D^b(A)$, $\mathcal{D}_3\simeq D^b(A)$ and $\mathcal{D}_4\simeq D^b(k)$. Since a semoorthogonal decomposition of $D^b(C)$ is given as
	$$
	D^b(C)=\langle D^b(k), D^b(A)\rangle,
	$$
	it follows that $\mathcal{D}_i$ is equivalent to an admissible subcategory of $D^b(C)$. This shows $\mathrm{rdim}(C\times C)\leq 1$. Since $C$ is non-split, $C\times C$ does not have a rational point. Hence $\mathrm{rdim}(C\times C)=1$, according to \cite{NV2}, Theorem 6.3.
\end{proof}
\begin{thm}
	Let $X=C_1\times C_2$ be the product of two distinct non-split Brauer--Severi curves and $\mathcal{L}$ a very ample line bundle of the form $\pi^{*}\mathcal{A}+H$, where $H$ is the relative hyperplane section. Then
	$$
	\mathrm{uc}((X,\mathcal{L}))=\mathrm{rdim}(X).
	$$
\end{thm}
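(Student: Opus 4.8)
The plan is to compute $\mathrm{uc}((X,\mathcal{L}))$ and $\mathrm{rdim}(X)$ separately and to show that both equal $2$. The mechanism forcing $\mathrm{uc}=2$ will be essentially the one used in Theorem 7.3 (a rank-two bundle coming from the non-split conic factors, together with the non-existence of an Ulrich line bundle), while the value of $\mathrm{rdim}$ will differ from the $C\times C$ case precisely because the relevant biquaternion algebra is now a division algebra rather than a matrix algebra.

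I would first treat $\mathrm{rdim}(X)$. Let $A_1,A_2$ be the quaternion division algebras attached to $C_1,C_2$. As in the proof of Theorem 7.3, a Künneth argument together with \cite{NV2}, Proposition 4.5 gives
$$D^b(C_1\times C_2)=\langle \mathcal{O}_{C_1}\boxtimes\mathcal{O}_{C_2},\ \mathcal{V}_1\boxtimes\mathcal{O}_{C_2},\ \mathcal{O}_{C_1}\boxtimes\mathcal{V}_1,\ \mathcal{V}_1\boxtimes\mathcal{V}_1\rangle,$$
whose four components are equivalent to $D^b(k)$, $D^b(A_1)$, $D^b(A_2)$ and $D^b(A_1\otimes_k A_2^{op})\simeq D^b(A_1\otimes_k A_2)$. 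The first three are representable in dimension $\leq 1$ (the middle two via the conics $C_1,C_2$). The decisive difference from the case $C\times C$, where $A\otimes_k A\simeq M_4(k)$ is split and yields a dimension-zero piece, is that here $A_1\not\simeq A_2$ forces $[A_1]+[A_2]=[A_1\otimes_k A_2]\neq 0$, with both classes of order two and $\mathbb{F}_2$-independent in $\mathrm{Br}(k)$, so $A_1\otimes_k A_2$ is a biquaternion division algebra of index $4$. Hence $D^b(A_1\otimes_k A_2)$ is representable in dimension $2$ but not in dimension $\leq 1$, which I would justify by the biquaternion computation of \cite{PB}, Appendix B together with \cite{NV3}. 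This gives $\mathrm{rdim}(X)=2$, the lower bound (that no semiorthogonal decomposition does strictly better) being the place where I would invoke the representability results of \cite{NV2} and \cite{NV3}.

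For $\mathrm{uc}((X,\mathcal{L}))$ I would first rule out Ulrich line bundles. Over $\bar{k}$ the surface $\bar{X}\cong\mathbb{P}^1\times\mathbb{P}^1$ with a polarization of bidegree $(a,b)$ carries exactly the two Ulrich line bundles $\mathcal{O}(a-1,2b-1)$ and $\mathcal{O}(2a-1,b-1)$; since a line bundle descending to $X$ must have even bidegree (the descent obstructions in $\mathrm{Br}(k)$ being $[A_1]$ and $[A_2]$, so that $\mathrm{Pic}(X)=2\mathbb{Z}\times 2\mathbb{Z}$), and since a descending polarization forces $a,b$ even, both Ulrich line bundles have odd bidegree in each factor and therefore do not lie in $\mathrm{Pic}(X)$. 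Thus no Ulrich line bundle exists and $\mathrm{uc}((X,\mathcal{L}))\geq 2$. For the upper bound I would pass to a quadratic splitting field $E$ of $A_1$: over $E$ one has $X_E\cong\mathbb{P}^1_E\times C'$ with $C'=C_2\otimes_k E$ a non-split conic, and the projection to $C'$ is now a genuine $\mathbb{P}^1$-bundle carrying a relative hyperplane class. Exactly as in Theorem 7.3, \cite{HOCH}, Theorem 4.3 then produces a rank-two Ulrich bundle $\pi^{*}\mathcal{V}_{-1}\otimes\mathcal{L}_E$ over $E$, so that $\mathrm{uc}((X_E,\mathcal{L}_E))=2$; combined with Proposition 3.2 this already yields $\mathrm{uc}((X,\mathcal{L}))\geq 2$.

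The hard part, and the main obstacle, is the descent. Unlike the case $C\times C$, for distinct $C_1,C_2$ neither projection of $X$ is a $\mathbb{P}^1$-bundle over $k$ (the class $[A_2]$ does not die over $k(C_1)$, and symmetrically), so the projective bundle structure and the rank-two bundle only exist a priori after base change to $E$, and I must manufacture one over $k$. Pushforward along the finite map $X_E\to X$ via Lemma 3.1 only gives the weaker estimate $\mathrm{uc}((X,\mathcal{L}))\leq \mathrm{ind}(A_1)\cdot 2=4$, so to pin the value to $2$ I would instead descend a rank-two bundle directly. The two geometric Ulrich line bundles $\mathcal{O}(a-1,2b-1)$ and $\mathcal{O}(2a-1,b-1)$ carry the \emph{same} Brauer obstruction $[A_1\otimes_k A_2]$, and a non-split extension of one by the other (using $\mathrm{Ext}^1\neq 0$ between them) is a \emph{simple} rank-two bundle whose determinant descends; the remaining point is to check that its single descent obstruction in $\mathrm{Br}(k)$ vanishes, which I expect to follow from the fact that $[A_1\otimes_k A_2]$ restricts to $0$ in $\mathrm{Br}(X)$. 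Verifying this vanishing is the crux of the argument; granting it, the descended bundle is a rank-two Ulrich bundle on $(X,\mathcal{L})$, whence $\mathrm{uc}((X,\mathcal{L}))=2=\mathrm{rdim}(X)$.
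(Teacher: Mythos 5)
Your computation of $\mathrm{rdim}(X)$ follows the paper's own route: the same semiorthogonal decomposition with components equivalent to $D^b(k)$, $D^b(A_1)$, $D^b(A_2)$, $D^b(A_1\otimes A_2)$, and the same appeal to \cite{PB}, Appendix B. One caveat there: your intermediate claim that $A_1\not\simeq A_2$ forces $A_1\otimes_k A_2$ to be a division algebra of index $4$ is false in general. By Albert's criterion the tensor product of two quaternion division algebras is division if and only if they have no common quadratic splitting field; e.g.\ $(-1,-1)\otimes(-1,3)$ over $\mathbb{Q}$ is Brauer equivalent to a quaternion algebra and has index $2$, in which case $D^b(A_1\otimes A_2)$ is representable in dimension $1$ and your argument would yield $\mathrm{rdim}(X)=1$, not $2$. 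The paper simply cites \cite{PB}, Corollary B.4; to make your version of the step airtight you must either assume $\mathrm{ind}(A_1\otimes A_2)=4$ or address the index-$2$ case.

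The genuine gap is the upper bound $\mathrm{uc}((X,\mathcal{L}))\leq 2$, and you concede it yourself (``granting it\dots''). The paper obtains this bound by declaring the argument identical to Theorem 7.3, i.e.\ by viewing $C_1\times C_2$ as a projective bundle over one factor and applying \cite{HOCH}, Theorem 4.3 to $\pi^{*}\mathcal{V}_{-1}\otimes\mathcal{L}$. Your objection to that route is legitimate: by Amitsur's theorem $[A_2]$ dies in $\mathrm{Br}(k(C_1))$ only if $[A_2]\in\{0,[A_1]\}$, so for distinct non-split curves neither projection is a $\mathbb{P}^1$-bundle over $k$. But your substitute does not close the argument. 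The descent obstruction of a simple rank-two extension $V$ of the two geometric Ulrich line bundles is a class $o(V)\in\mathrm{Br}(k)$; the relation $o(\det V)=2\,o(V)=0$ only shows that $o(V)$ is $2$-torsion, and the vanishing of $[A_1\otimes A_2]$ under the restriction $\mathrm{Br}(k)\to\mathrm{Br}(X)$ is not the same as the vanishing of $o(V)$ in $\mathrm{Br}(k)$, which is what descent of $V$ actually requires. As written you have established only $2\leq\mathrm{uc}((X,\mathcal{L}))\leq 4$ (the upper bound coming from pushforward along a splitting field), so the asserted equality $\mathrm{uc}((X,\mathcal{L}))=2=\mathrm{rdim}(X)$ is not proved.
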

\begin{proof}
If $X=C_1\times C_2$ is the product of two distinct non-aplit Brauer--Severi curves, it is still possible to show $\mathrm{uc}((X,\mathcal{L}))=2$. The arguments are pretty much the same as in the proof of Theorem 7.3. But when it comes to $\mathrm{rdim}(X)$, one can show that $\mathrm{rdim}(X)=2$. In fact one can show that there is a semiorthogonal decomposition  
$$
D^b(C_1\times C_2)=\langle \mathcal{O}_{C_1}\boxtimes \mathcal{O}_{C_2},\mathcal{V}_1\boxtimes \mathcal{O}_{C_2},\mathcal{O}_{C_1}\boxtimes \mathcal{W}_1,\mathcal{V}_1\boxtimes \mathcal{W}_1 \rangle, 
$$ 
where $\mathcal{V}_1$ is the dual of $\mathcal{V}_{-1}$ and $\mathcal{W}_1$ the dual of $\mathcal{W}_{-1}$. Note that $\mathcal{V}_{-1}$ is the vector bundle in the middle of the Euler sequence for $C_1$, and $\mathcal{W}_{-1}$ respectively. Again from \cite{NV2}, Proposition 4.5, we conclude that $\mathrm{End}(\mathcal{O}_{C_1}\boxtimes \mathcal{O}_{C_2})=k, \mathrm{End}(\mathcal{V}_1\boxtimes \mathcal{O}_{C_2})=A, \mathrm{End}(\mathcal{O}_{C_1}\boxtimes \mathcal{W}_1)=B$ and $\mathrm{End}(\mathcal{V}_1\boxtimes \mathcal{W}_1)=A\otimes B$, where $A$ and $B$ are the quaternion algebras corresponding to $C_1$ and $C_2$. Then \cite{PB}, Appendix B Corollary B.4. implies $\mathrm{rdim}(X)=2$.	
\end{proof}
%\begin{rema}
%	\textnormal{If $X=C_1\times C_2$ is the product of two distinct non-aplit Brauer--Severi curves, it is still possible to show $\mathrm{uc}((X,\mathcal{L}))=2$. The arguments are pretty much the same as in the proof of Theorem 7.3. But when it comes to $\mathrm{rdim}(X)$, one can show that $\mathrm{rdim}(X)=2$. In fact one can show that there is a semiorthogonal decomposition  
%	$$
%D^b(C_1\times C_2)=\langle \mathcal{O}_{C_1}\boxtimes \mathcal{O}_{C_2},\mathcal{V}_1\boxtimes \mathcal{O}_{C_2},\mathcal{O}_{C_1}\boxtimes \mathcal{W}_1,\mathcal{V}_1\boxtimes \mathcal{W}_1 \rangle, 
%$$ 
%where $\mathcal{V}_1$ is the dual of $\mathcal{V}_{-1}$, that is the vector bundle in the middle of the Euler sequence for $C_1$, and $\mathcal{W}_1$ respectively. Again from \cite{NV2}, Proposition 4.5, we conclude that $\mathrm{End}(\mathcal{O}_{C_1}\boxtimes \mathcal{O}_{C_2})=k, \mathrm{End}(\mathcal{V}_1\boxtimes \mathcal{O}_{C_2})=A, \mathrm{End}(\mathcal{O}_{C_1}\boxtimes \mathcal{W}_1)=B$ and $\mathrm{End}(\mathcal{V}_1\boxtimes \mathcal{W}_1)=A\times B$, where $A$ and $B$ are the quaternion algebras corresponding to $C_1$ and $C_2$. Then \cite{PB}, Appendix B Corollary B.4. implies $\mathrm{rdim}(X)=2$. In this case we therefore have
%$$
%\mathrm{uc}((X,\mathcal{L}))=\mathrm{rdim}(X).
%$$  }
%\end{rema}
\begin{thm}
	Let $X=C\times \mathbb{P}^1$ be the product of a non-split Brauer--Severi curve and a projective line and $\mathcal{L}$ a very ample line bundle of the form $\pi^{*}\mathcal{A}+H$, where $H$ is the relative hyperplane section. Then
	$$
	\mathrm{uc}((X,\mathcal{L}))=\mathrm{rdim}(X)+1.
	$$
\end{thm}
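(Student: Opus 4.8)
The plan is to follow the strategy of Theorem 7.3 almost verbatim for the Ulrich complexity, and to recompute the categorical representability dimension with the simplification that one factor is split. As there, I would regard $X=C\times\mathbb{P}^1$ as the projective bundle $\pi\colon X=\mathbb{P}(\mathcal{O}_C^{\oplus 2})\to C$ with relative hyperplane section $H$; after base change to $\bar k$ the base $C$ becomes $\mathbb{P}^1$ and $X$ becomes $\mathbb{F}_0=\mathbb{P}^1\times\mathbb{P}^1\to\mathbb{P}^1$, exactly the situation treated in Theorem 7.3. Note also that $X$ has no rational point, since a $k$-point would project to one on the non-split curve $C$.

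First I would establish the upper bound $\mathrm{uc}((X,\mathcal{L}))\leq 2$. Take the rank-two bundle $\mathcal{V}_{-1}$ on $C$ arising from the Euler sequence, with $\mathcal{V}_{-1}\otimes_k\bar k\simeq\mathcal{O}_{\mathbb{P}^1}(-1)^{\oplus 2}$. Since $H^\bullet(\mathbb{P}^1,\mathcal{O}_{\mathbb{P}^1}(-1))=0$, flat base change gives $H^\bullet(C,\mathcal{V}_{-1})=0$. Because Ulrich-ness is a cohomological condition stable under base change to $\bar k$, it suffices to verify that $(\pi^*\mathcal{V}_{-1})\otimes\mathcal{L}$ becomes Ulrich on $\mathbb{F}_0$, which follows from \cite{HOCH}, Theorem 4.3. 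Thus $(\pi^*\mathcal{V}_{-1})\otimes\mathcal{L}$ is a rank-two Ulrich bundle for $(X,\mathcal{L})$.

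Next I would rule out an Ulrich line bundle, yielding $\mathrm{uc}\geq 2$. If $\mathcal{D}$ were one, its base change $\bar{\mathcal{D}}$ would be an Ulrich line bundle on $\mathbb{F}_0$, hence of the form $\pi^*\mathcal{F}\otimes\bar{\mathcal{L}}$ with $H^\bullet(\mathbb{P}^1,\mathcal{F})=0$ by \cite{HOCH}, Remark 3.3, forcing $\mathcal{F}\simeq\mathcal{O}_{\mathbb{P}^1}(-1)$. The $\mathbb{P}^1$ occurring here is the base $C\otimes_k\bar k$, and since $C$ is non-split the image of $\mathrm{Pic}(C)\hookrightarrow\mathrm{Pic}(\mathbb{P}^1)=\mathbb{Z}$ is $2\mathbb{Z}$; the odd-degree bundle $\mathcal{O}_{\mathbb{P}^1}(-1)$ therefore does not descend to $C$, so $\pi^*\mathcal{F}\otimes\bar{\mathcal{L}}$ does not descend to $X$, a contradiction. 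Hence $\mathrm{uc}((X,\mathcal{L}))=2$. For the representability dimension, the semiorthogonal decompositions $D^b(C)=\langle\mathcal{O}_C,\mathcal{V}_1\rangle$ and $D^b(\mathbb{P}^1)=\langle\mathcal{O}_{\mathbb{P}^1},\mathcal{O}_{\mathbb{P}^1}(1)\rangle$ give the product decomposition
$$
D^b(C\times\mathbb{P}^1)=\langle\mathcal{O}_C\boxtimes\mathcal{O}_{\mathbb{P}^1},\,\mathcal{V}_1\boxtimes\mathcal{O}_{\mathbb{P}^1},\,\mathcal{O}_C\boxtimes\mathcal{O}_{\mathbb{P}^1}(1),\,\mathcal{V}_1\boxtimes\mathcal{O}_{\mathbb{P}^1}(1)\rangle,
$$
where $\mathcal{V}_1$ is the dual of $\mathcal{V}_{-1}$ and, by \cite{NV2}, Proposition 4.5, the four endomorphism algebras are $k,\,A,\,k,\,A$ with $A$ the quaternion algebra of $C$. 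The components are thus $D^b(k)$, representable in dimension $0$, and $D^b(A)$, which is an admissible subcategory of $D^b(C)$ and so representable in dimension $1$ but not $0$ (as $A$ is a division algebra it cannot be realized inside $D^b(L)$ for a field $L$). This gives $\mathrm{rdim}(X)\leq 1$, and since $X$ has no rational point, $\mathrm{rdim}(X)\neq 0$ by \cite{NV2}, Theorem 6.3; hence $\mathrm{rdim}(X)=1$ and $\mathrm{uc}((X,\mathcal{L}))=2=\mathrm{rdim}(X)+1$.

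I expect the main obstacle to be the non-descent step: one must check carefully that every Ulrich line bundle on $\mathbb{F}_0$ is indeed of the form $\pi^*\mathcal{O}_{\mathbb{P}^1}(-1)\otimes\bar{\mathcal{L}}$ and that the relevant $\mathbb{P}^1$ is precisely the non-split base $C\otimes_k\bar k$, so that the parity obstruction in $\mathrm{Pic}(C)$ genuinely applies rather than the parity in the split fiber direction. The remaining points—the product semiorthogonal decomposition, the endomorphism-algebra computation, and the representability of $D^b(A)$ in dimension one—are routine consequences of \cite{NV2} and require no new input.
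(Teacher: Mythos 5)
Your proposal is correct and follows the same route as the paper: the paper's proof of this theorem simply refers back to Theorem 7.3 for $\mathrm{uc}((X,\mathcal{L}))=2$ and exhibits the identical semiorthogonal decomposition $\langle \mathcal{O}_{C}\boxtimes \mathcal{O}_{\mathbb{P}^1},\mathcal{V}_1\boxtimes \mathcal{O}_{\mathbb{P}^1},\mathcal{O}_{C}\boxtimes \mathcal{O}_{\mathbb{P}^1}(1),\mathcal{V}_1\boxtimes \mathcal{O}_{\mathbb{P}^1}(1)\rangle$ to get $\mathrm{rdim}(X)=1$. You in fact supply more detail than the paper does (the endomorphism algebras $k,A,k,A$, the descent/parity obstruction for ruling out Ulrich line bundles, and the no-rational-point argument), all consistent with how Theorem 7.3 is proved there.
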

\begin{proof}
	As in the proof of Theorem 7.3 one can show $\mathrm{uc}((X,\mathcal{L}))=2$. The semiorthogonal decomposition is given as
	$$
D^b(C\times \mathbb{P}^1)=\langle \mathcal{O}_{C}\boxtimes \mathcal{O}_{\mathbb{P}^1},\mathcal{V}_1\boxtimes \mathcal{O}_{\mathbb{P}^1},\mathcal{O}_{C}\boxtimes \mathcal{O}_{\mathbb{P}^1}(1),\mathcal{V}_1\boxtimes \mathcal{O}_{\mathbb{P}^1}(1) \rangle, 	
	$$
where $\mathcal{V}_1$ is again the dual of $\mathcal{V}_{-1}$. Again one can show that the existence of the above semiorthogonal decomposition implies $\mathrm{rdim}(X)=1$.		
	\end{proof}
\section{Appendix}
Finally, let me give an idea of how a relation between $\mathrm{uc}$ and $\mathrm{rdim}$ can be established in a more gereral setting. At this point I want to mention that the following lines rather contain ideas and some intuition than rigorous or precise mathematical statements. But it could be the starting point of further work in this direction.\\

\noindent
Let $X$ be a smooth projective variety over $k$ and assume there is a full weak exceptional collection $\mathcal{E}_0,...,\mathcal{E}_n$, where $\mathrm{End}(\mathcal{E}_j)$ are central simple. Since $\langle \mathcal{E}_j\rangle\simeq D^b(\mathrm{End}(\mathcal{E}_j))$, it is reasonable to expect that $\mathrm{rdim}(X)$ depends on invariants of the central simple algebras $\mathrm{End}(\mathcal{E}_j)$. Notice that we are particularly interessted in embeddings of the form 
$$
D^b(\mathrm{End}(\mathcal{E}_i))\hookrightarrow D^b(Y_i)
$$
where $Y_i$ are smooth projective varieties with $\mathrm{dim}(Y_i)\leq m$. Now we want to find the smallest such $m$. Let us denote period and index of $\mathrm{End}(\mathcal{E}_j)$ by $p_j$ and $i_j$. So we expect $\mathrm{rdim}(X)=f(p_1,...,p_n,i_1,...,i_n)$ for a suitable function $f$. On the other hand, there is some Beilinson-type spectral sequence:
\begin{thm}
	Let $X$ be a smooth projective variety over a field $k$ and $\mathcal{E}_0,...,\mathcal{E}_n$ a full weak exceptional collection of coherent sheaves. Then for any coherent sheaf $\mathcal{F}$ there is a spectral sequence
	\begin{eqnarray*}
		E^{p,q}_1=\mathrm{Ext}^q(R_{\mathcal{E}_n\cdots\mathcal{E}_{p+n+1}}\mathcal{E}_{p+n},\mathcal{F})\otimes \mathcal{E}_{p+n}\Rightarrow E^{p+q}=\begin{cases}\mathcal{F}& \text{if} \;\;\;\;\; p+q=0\\
			0& \text{otherwise}
		\end{cases}
	\end{eqnarray*}
	The grading is bounded by $0\leq q\leq n$ and $-n\leq p\leq 0$.
\end{thm}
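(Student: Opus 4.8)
The plan is to derive the spectral sequence from the \emph{right dual collection} and the induced Postnikov filtration, following the pattern of Beilinson's resolution of the diagonal but with mutations replacing the explicit Koszul complex. First I would pass from the full weak exceptional collection $\mathcal{E}_0,\dots,\mathcal{E}_n$ to the object obtained by successive right mutation past all later members, namely $\mathcal{E}_{p+n}^{\vee}:=R_{\mathcal{E}_n\cdots\mathcal{E}_{p+n+1}}\mathcal{E}_{p+n}$, which for $p=0$ is $\mathcal{E}_n$ itself and for $p=-n$ is $R_{\mathcal{E}_n\cdots\mathcal{E}_1}\mathcal{E}_0$. The property to establish is the biorthogonality
$$
\mathrm{Hom}^{\bullet}(\mathcal{E}_{i}^{\vee},\mathcal{E}_{j})=\begin{cases}\mathrm{End}(\mathcal{E}_i)& i=j,\\ 0& i\neq j,\end{cases}
$$
with the nonzero entry concentrated in degree $0$. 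This is a formal consequence of the mutation axioms together with the semiorthogonality of the collection, and it is the place where the division-algebra nature of the $\mathrm{End}(\mathcal{E}_i)$ first enters the bookkeeping.

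Next I would use that the full weak exceptional collection yields a semiorthogonal decomposition $D^b(X)=\langle\langle\mathcal{E}_0\rangle,\dots,\langle\mathcal{E}_n\rangle\rangle$ with $\langle\mathcal{E}_i\rangle\simeq D^b(\mathrm{End}(\mathcal{E}_i))$, as recorded in Remark 4.4. Consequently every object of $D^b(X)$, in particular the sheaf $\mathcal{F}$, carries a canonical Postnikov filtration whose graded pieces are its projections onto the components $\langle\mathcal{E}_{p+n}\rangle$. Using the dual objects these projection functors become explicit: the projection of $\mathcal{F}$ onto $\langle\mathcal{E}_{p+n}\rangle$ is $R\mathrm{Hom}(\mathcal{E}_{p+n}^{\vee},\mathcal{F})\otimes_{\mathrm{End}(\mathcal{E}_{p+n})}\mathcal{E}_{p+n}$. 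Equivalently, one resolves $\mathcal{O}_{\Delta}$ on $X\times X$ by a complex with terms of the form $\mathcal{E}_{p+n}\boxtimes(\mathcal{E}_{p+n}^{\vee})^{*}$ and convolves the resulting Fourier--Mukai kernel against $\mathcal{F}$. Either way the spectral sequence of the Postnikov tower is the asserted one: its $E_1$-page collects the cohomology sheaves of the graded pieces, namely $E_1^{p,q}=\mathrm{Ext}^q(\mathcal{E}_{p+n}^{\vee},\mathcal{F})\otimes\mathcal{E}_{p+n}$, and since the filtration reassembles $\mathcal{F}$ it abuts to $\mathcal{F}$ in total degree $0$ and to $0$ otherwise. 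The ranges $0\le q\le n$ and $-n\le p\le 0$ follow from the length $n+1$ of the collection and the boundedness of $\mathrm{Ext}$ on the smooth projective $X$.

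The hard part will be running the mutation calculus correctly when the algebras $\mathrm{End}(\mathcal{E}_i)$ are division algebras rather than $k$: the tensor product $\otimes\,\mathcal{E}_{p+n}$ must be read over the possibly noncommutative ring $\mathrm{End}(\mathcal{E}_{p+n})$, and one has to check that the adjunctions defining the right mutations, hence the identification of the projection functors and the decomposition of the diagonal, remain valid in this $D$-linear setting. Once the biorthogonality above is secured and the projection formula is phrased $\mathrm{End}(\mathcal{E}_{p+n})$-linearly, the remainder is the standard homological packaging of a Postnikov system into its spectral sequence, which I would carry out following the treatments of Bondal and of Gorodentsev--Rudakov, adapted verbatim to the weak exceptional setting of \cite{ORLOV}.
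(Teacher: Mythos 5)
The paper itself offers no proof of this statement: it appears in the Appendix, which the author explicitly describes as containing ``ideas and some intuition'' rather than rigorous arguments, and the theorem is simply asserted as a ``Beilinson-type spectral sequence.'' So there is nothing in the paper to compare your argument against line by line; what you have written is essentially the proof the paper implicitly relies on, namely the standard Bondal--Gorodentsev--Rudakov route: form the right dual objects $\mathcal{E}_{p+n}^{\vee}=R_{\mathcal{E}_n\cdots\mathcal{E}_{p+n+1}}\mathcal{E}_{p+n}$, use the semiorthogonal decomposition $D^b(X)=\langle\langle\mathcal{E}_0\rangle,\dots,\langle\mathcal{E}_n\rangle\rangle$ of Remark 4.4 to produce the Postnikov filtration of $\mathcal{F}$ whose graded pieces are $R\mathrm{Hom}(\mathcal{E}_{p+n}^{\vee},\mathcal{F})\otimes_{\mathrm{End}(\mathcal{E}_{p+n})}\mathcal{E}_{p+n}$, and take the spectral sequence of that filtration. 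Your approach is correct and is the right level of generality for the weak exceptional setting.

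Two points deserve more than the passing acknowledgement you give them. First, the normalization of the mutations: right mutation is defined only up to a shift convention, and the claim that $\mathrm{Hom}^{\bullet}(\mathcal{E}_i^{\vee},\mathcal{E}_i)$ is concentrated in degree $0$ (rather than in degree $-(n-i)$ or similar) is exactly what fixes the indexing of the $E_1$-page and the asserted bound $0\leq q\leq n$; if you adopt the cone-without-shift convention you will find the nonzero Hom displaced and the theorem's grading will come out wrong, so this must be pinned down explicitly rather than cited as ``a formal consequence of the mutation axioms.'' Note also that the bound $0\le q\le n$ is not automatic from ``boundedness of Ext on a smooth projective variety,'' since the $\mathcal{E}_i^{\vee}$ are genuine complexes; it is the degree-$0$ normalization together with the purity of the duals that delivers it. Second, the division-algebra issue you flag is in fact harmless and you can say why: every module over a division algebra is free, so $-\otimes_{\mathrm{End}(\mathcal{E}_{p+n})}\mathcal{E}_{p+n}$ is exact, the cohomology sheaves of the graded pieces are computed termwise, and the $E_1$-page is literally $\mathrm{Ext}^q(\mathcal{E}_{p+n}^{\vee},\mathcal{F})\otimes\mathcal{E}_{p+n}$ as stated. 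With those two points made precise, the rest is the routine passage from a Postnikov system to its spectral sequence, and the abutment is $\mathcal{F}$ in total degree $0$ precisely because $\mathcal{F}$ was assumed to be a sheaf.
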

Now let $\mathcal{F}$ be a certain twist of an Ulrich bundle on $X$ and assume there is a vanishing theorem of the form
$$
\mathrm{Ext}^q(R_{\mathcal{E}_n\cdots\mathcal{E}_{p+n+1}}\mathcal{E}_{p+n},\mathcal{F})=0  \quad \textnormal{for} \quad q\neq 1 \quad \textnormal{and} \quad  h^1(\mathcal{F})=0.
$$
Then, by the properties of the above spectral sequence, we would get an exact sequence of the form
	\begin{eqnarray*}
	0\longrightarrow \mathcal{E}_{0}^{\oplus b_0}\longrightarrow \mathcal{E}_{1}^{\oplus b_{1}}\longrightarrow \cdots \longrightarrow \mathcal{E}_{n-1}^{\oplus b_{n-1}}\longrightarrow \mathcal{F}\longrightarrow 0.
\end{eqnarray*}
Now the rank of the vector bundles $\mathcal{E}_j$ and the dimension of the extension groups 
 $\mathrm{Ext}^1(R_{\mathcal{E}_n\cdots\mathcal{E}_{p+n+1}}\mathcal{E}_{p+n},\mathcal{F})$ depend on the period and/or the index of $\mathrm{End}(\mathcal{E}_j)$. This implies that $\mathrm{rk}(\mathcal{F})=g(p_1,...,p_n,i_1,...,i_n)$ for a suitable function $g$. Now one could study the relation between $f$ and $g$. If, for instance, one of the functions could be resolved to $p_j$ or $i_j$, then there is a direct relation between the Ulrich complexity and $\mathrm{rdim}(X)$. So if $p_1=h(\mathrm{rdim}(X),p_2,...p_n,i_0,...,i_n)$ for a suitable $h$, then by pluging $p_1$ into $g$ gives 
 $$
 \mathrm{uc}(X)=g(h(\mathrm{rdim}(X),p_2,...,p_n,i_0,...,i_n),p_2,...,p_n,i_0,...,i_n).
 $$ 
 For certain Brauer--Severi varieties $X$ with period $p$ equals index $i$, we had $\mathrm{rdim}(X)+1=\mathrm{uc}(\phi_p(X))$. We see that we could set $\mathrm{rdim}(X)=f(p,i)=p-1$ and $\mathrm{uc}(\phi_p(X))=g(p,i)=p$. In this case we do not need to resolve to $p$. Instead, we directly get $\mathrm{rdim}(X)+1=\mathrm{uc}(\phi_p(X))$. But it is not clear to us whether $f$ and $g$ are obtained from the above construction. 
 
%\addcontentsline{toc}{section}{References}	

\vspace{0.5cm}
\noindent
{\tiny HOCHSCHULE FRESENIUS UNIVERSITY OF APPLIED SCIENCES 40476 D\"USSELDORF, GERMANY.}\\
E-mail adress: sasa.novakovic@hs-fresenius.de\\
\noindent
{\tiny MATHEMATISCHES INSTITUT, HEINRICH--HEINE--UNIVERSIT\"AT 40225 D\"USSELDORF, GERMANY.}\\
E-mail adress: novakovic@math.uni-duesseldorf.de

\end{document}